\def\numberbysection{\@addtoreset{equation}{section}
         \renewcommand{\theequation}{\thesection.\arabic{equation}}}
\def\subsubsection{\@startsection{subsubsection}{3}%
  \normalparindent{.5\linespacing\@plus.7\linespacing}{-.5em}%
  {\normalfont\bfseries}}
\theoremstyle{plain}
\newtheorem{theorem}{Theorem}[section]
\newtheorem{corollary}[theorem]{Corollary}
\newtheorem{proposition}[theorem]{Proposition}
\newtheorem{lemma}[theorem]{Lemma}
\theoremstyle{definition}
\newtheorem{definition}[theorem]{Definition}
\newtheorem{notation}[theorem]{Notation}
\newtheorem{remark}[theorem]{Remark}
\def\Seq{Seq^{\rm nr}}
\def\Np{\mathbb{N}^+}
\def\dSh{\mathrm{dSh}}
\def\R{\mathbb{R}}
\def\T{\mathcal{T}}
\def\Tn{{\mathcal T}_n}
\def\precT{\prec_{\T}}
\def\t{\tau}
\def\del{\partial}
\def\Tbr{\T_{\bullet}}
\def\Twr{\T_{\circ}}
\def\l{\mathbf{l}}
\def\CalF{\mathcal{F}}
\def\CalC{\mathcal{C}}
\begin{document}

\title[Permutahedral Structures of $E_2$ Operads]{Permutahedral Structures of $E_2$ Operads}

\author
[Ralph M.\ Kaufmann]{Ralph M.\ Kaufmann}
\email{rkaufman@math.purdue.edu}

\address{\textsc{Purdue University, Department of Mathematics,
 West Lafayette, IN 47907}
 and Max--Planck--Institute f\"ur Mathematik, Bonn, Germany}

\author
[Yongheng Zhang]{Yongheng Zhang}
\email{yzhang@amherst.edu }

\address{Amherst College, Department of Mathematics and Statistics,
Amherst, MA 01002
}
\begin{abstract}
There are basically two interesting breeds of $E_2$ operads, those that detect loop spaces and those that solve Deligne's conjecture.
The former deformation retract to Milgram's space obtained by gluing together permutahedra at their faces. We show how the second breed can be covered by permutahedra as well. Even more is true, the quotient is actually already an operad up to homotopy, which induces the operad structure on cellular chains adapted to prove Deligne's conjecture, while no such structure is known on Milgram's space.
We show, explicitely, that these two quotients are homotopy equivalent.
This gives a new topological proof that  operads of this type  are indeed 
of  the right homotopy type. 
It also furnishes a very nice clean description in terms of polyhedra, and with it PL topology, for the
whole story. The permutahedra and partial orders play a central role. This, in turn,  provides direct links to other fields of mathematics. We for instance find a new cellular decomposition of permutahedra using partial orders and that the permutahedra give the cells for the Dyer--Lashof operations.
\end{abstract}

\maketitle


\section*{Introduction}

 Over several decades different models of $E_2$ operads suitable for different purposes have been introduced:   the little 2-cubes operad $\mathcal{C}_2$ \cite{BV,May}, the little 2-discs operad $\mathcal{D}_2$,  the Steiner operad $\mathcal{H}_{\mathbb{R}^2}$ \cite{Ste,May2} which combines the good properties of $\mathcal{C}_2$ and $\mathcal{D}_2$, and more recently
 the Fulton-MacPherson operad $FM_2$ \cite{Kon99}.  Although as $E_2$--operads they are all quasi--isomorphic,  the individual homotopies are of interest. For the first list these are established by realizing that up to natural homotopy, (e.g.\ contracting intervals) these spaces are configuration spaces $\{F(\mathbb{R}^2,n)\}_{n\geq 1}$ of $n$ distinct ordered points on $\mathbb{R}^2$, whose homotopy type is known to be that of a $K(PBr,1)$.
 Renewed interest in $E_2$ operads stems from various solutions of Deligne's Hochschild cohomology conjecture \cite{BB,BF,Kau03,Kon99,Kon00,MS,MS03,Tar1,Tar2,Vor} and in the development of string topology \cite{CS}. In this setting cactus type operads were invented
\cite{Vor2,Kau02}. On the toplogical level, as we discuss, these are basically all isomorphic to the $E_2$ operad $Cact$ of spineless cacti introduced in \cite{Kau02}. The arity $n$ space $Cact(n)$ roughly consists of isotopy classes of embeddings of circles with positive radii into the  plane such that the images form a planted rooted planar tree picture of lobes modulo incidence parameters. For this operad and its other isomorphic versions, the proof of being and $E_2$ operad is rather indirect. It was shown using pure braid group technique of Fiedorowicz \cite{F}  and cellular operad technique of Berger \cite{Ber98}.
We now offer a direct  topological proof for the part of Fiedorowicz recognition concerning the homotopy type.

Using the different perspective of permutahedral covers, we prove the homotopy equivalence between $F(\mathbb{R}^2,n)$ and $Cact(n)$  explicitly by constructing a single homotopy equivalence between them. 
Permutahedra are an essential tool in 
the detection of loop spaces starting with Milgram \cite{Milgram66}, see \cite{Ber97,MSS} for nice reviews. They also appear in various other contexts, see  e.\ g.\  \cite{Kap, Tonks}. The full list would be too long to reproduce. They are still an active topic of research, especially through their connection to configuration spaces of points $F(\mathbb{R}^2,n)$, which is how they appear in the $E_2$ operad story, see \cite{Ber97}. From a totally different motivation, 
it has recently be shown how  $F(\mathbb{R}^2,n)$ deformation retracts to Milgram's permutahedral model $\mathcal{F}(n)$ obtained by gluing $n!$ copies of permutahedra $P_n$ along their proper faces \cite{BZ}. This applies directly to all the $E_2$ operads above based on cofiguration spaces,  giving them all a permutaheral structure, i.e.\ they appear as a quotient of permutahedral space $\coprod_{\sigma\in S_n} P_n$ and are homotopy equivalent to Milgrams model. Here and below, we write $S_n$ for the symmetric group on $n$ letters. The exception are spineless cacti $Cact$ and the models related to it, which are of a different breed.
While the configuration models, are  adapted to acting on loop spaces, through this connection spineless cacti and its relatives are adapted to acting on Hochschild complexes, or operads with multiplication.

We will prove that spineless cacti  and hence  all of its incarnations, see \S\ref{discussionpar}, have a permutahedral cover. The appearance of permutahedra in this model is very surprising, although the construction with hindsight looks very natural.
After passing to normalized spineless cacti, i.e.\ the spaces $Cact^1(n)$, we will show that they admit a presentation $\mathcal{C}(n)$ as the quotient of $n!$ copies of $P_n$.
It is important to note that here there is not only a gluing along faces, but
parts of the interior of the permutahedra are identified. We give an explicit description. Namely, $Cact^1(n)$ is a CW complex
whose cells are indexed by a certain type of labelled rooted (actually planted) planar trees. Each planar tree has an underlying  poset structure which transfers to the set of labels. We can succinctly state that each permutahedron corresponds to a  possible total order on $[n]=\{1,\dots,n\}$, viz.\ a permutation, and it is comprised of the sub-CW complex of cells indexed by partial orders on $[n]$ that are compatible with the given total order. The gluing is then along the cells that are indexed by non--total orders.
Going beyond this, there is an explicit relation between the codimension of the cells and
a partial order the partial orders. The highest co--dimension cells, that is cells of dimension $0$ are indexed by the partial order in which no elements are comparable. Since we are dealing with planar trees, see \cite{Kau02}, there
are again orders on the sets of equal height, which means that there are indeed $n!$ dimension $0$ cells, which are the vertices of the permutahedra. These combinatorics are all explained in detail below.

Due to the nature of the quotient, there is a  natural map $\mathcal{F}(n)\rightarrow Cact(n)$, whose description already yields a quasi--isomorphism. We will explicitly construct the homotopy inverse induced from compatible homotopies on the $n!$ $P_n$. In a sense, this map answers the question ``where are the centers of the lobes in cacti?''. This is not as straightforward as for the little discs, where the centers are given by the projection onto the factor of configuration space. For spineless cacti, $Cact^1$ corresponds to the centers. The quotient of $\CalF$ shows how this is related to configuration.

Recall, that $Cact^1$ has a topological operad structure, which is associative up to homotopy. An that this already induces an operad structure on the cellular level.
No such structure is known for $\CalF$. This also explains, why it was so difficult to find a proof of Deligne's conjecture. One can say that the operad structure only become apparent after taking quotients, see \S\ref{discussionpar}. This is astonishing, since instead of enlarging, we make things smaller by taking quotients.

The methods we use, are classical maps and homotopies, but for the combinatorics, we use partial orders, partitions and b/w planar trees. For these, we give a common treatment and introduce several new operators, which link our work to that of Connes and Kreimer.

Another upshot of our treatment is a new cellular decomposition of permutahedra, which has a cube at its core and then has $n-1$ shells for each $P_n$. In the tree language, the $k$--th shell is given by trees with initial branching number $k$. There is also a nice duality  between the outer faces in this decomposition of $P_{n-1}$ and the top--dimensional cells of $P_n$ leading to a recursion. This is established via the operators mentioned above.

The decomposition of the $P_n$ also allows us to recognize them as the cells responsible for the Dyer--Lashof operations.

In retrospect, spineless cacti are a natural geometric model for the sequence operad of \cite{MS03},  see \cite{Kau08}. We make this explicit  in \S\ref{isopar}.  This gives a way to show that the model of formulas \cite{MS} and hence sequences have the right homotopy type.
Our topological result also implies the result \cite{Tur} on the quasi-isomorphism between the cellular chains of $\mathcal{F}(n)$ and the cellular chains of $Cact^1(n)$. See \S\ref{discussionpar} for more details on these remarks.

The organization of the paper is as follows. Section \ref{permutasec} fixes frequently used notations and introduces the definition of unshuffless of a sequence. In it, we  also recall the definition and basic properties of the permutahedron $P_n$ and the permutahedral structure $\mathcal{F}(n)$ of $F(\mathbb{R}^2,n)$. Section \S\ref{spineslesssec} recalls the definition of spineless cacti and make explicit its polysimplicial structure. The  permutahedral structure $\mathcal{C}(n)$ of $Cact^1(n)$ is given in \S\ref{mainsec} using partial and total orders. This contains one direction of the homotopy equivalence. Here, we also introduce four operators $B^{\pm}_{b/w}$ acting on trees that are essential in keeping track of the combinatorics. These operators are analogous to those used in \cite{ConnesKreimer}. The homotopy equivalences between $\mathcal{F}(n)$ and $\mathcal{C}(n)$ is proven in Section \S\ref{homotopysec}, by giving
and explicit homotopy inverse. Some of the more tedious details are relegated to the Appendix. Finally, we give a more detailed discussion of $E_2$ operads and applications in \S\ref{discussionpar}.

\section*{Acknowledgements}
We would like  thank Clemens Berger for discussions.
RK would like to thank the Max-Planck Institute for Mathematics in Bonn for the hospitality and the Program of Higher Structures.
RK  thankfully  acknowledges  support  from  the  Simons foundation under collaboration grant \# 317149.

\section{Permutations, Permutohedra and Milgram's model}
\label{permutasec}
In this section, we start by recalling the definition of a permutahedron. We then set up the combinatorial language, which we will use for indexing. This is unavoidably a bit complex, as we will have to deal with lists of lists.
Thus we will introduce a short hand notation for these lists and manipulations on them. Besides reducing clutter, an additional benefit is an easy description of a poset structure and a grading.
 This allows us to encode the poset structure of the faces of permutahedra in this formalism.

\subsection{Permutohedra}

Before we recall the definition of our main actors, the permutahedra \cite{Mil}, we fix our notations for sequences in general and elements in the symmetric group $S_n$ in particular.

\begin{definition}
Let $\mathbb{N}^+$ the set of positive integers. For $n\in \mathbb{N}^+$,  set $[n]=\{1,2,\cdots,n\}$. A \textit{sequence} of length $n$ is a function $\phi:[n]\to \mathbb{N}^+$. $\phi$ is called a \textit{non--repeating sequence} (nr--sequence)
if this function is also injective.
We say the length $|\phi|$ of $\phi$ is $n$. By $\mathrm{Seq}_n$, we mean the set of all sequences of length $n$ and $\mathrm{\Seq}_n$, the set of all nr-sequences of length $n$.
\end{definition}

\begin{notation}
Any nr-sequence can be identified by a nonempty ordered list of distinct elements in $\mathbb{N}^+$ given by its images. Denote by $\phi_i:=\phi(i)$ the image of $i$ under $\phi$.
By abuse of notation, to specify $\phi$, we will use the following list notation, $\phi_1\phi_2\cdots \phi_n$,  where we do not use commas to separate the terms if no confusion arises.
We write $\{\phi\}$ for the image of $\phi$, which is the set $\{\phi(1),\phi(2),\cdots,\phi(n)\}$.
\end{notation}

\textit{Example.} The symmetric group $S_n$ consists of $n!$ bijective functions $\sigma: [n]\hookrightarrow [n]$, namely nr-sequences of length $n$ whose domain and codomain overlap. Each $\sigma$ can be identified with the list of its images $\sigma_1\sigma_2\cdots\sigma_n$. This is a short hand for the traditional notation $\left(
\begin{matrix}
 1&2&3&\dots&n\\
\sigma(1)&\sigma(2)&\sigma(3)&\dots&\sigma(n)
\end{matrix}\right)$. For example $1234$ is $id\in S_4$ and $2143$ is the product of the transpositions switching 1 and 2, and 3 and 4 respectively.

\begin{definition}
Given $\sigma \in S_n$, we define the vector $\mathbf{v}_{\sigma}$ in $\mathbb{R}^n$ as follows $$\mathbf{v}_{\sigma}:=
(\sigma^{-1}(1),\sigma^{-1}(2),\cdots,\sigma^{-1}(n)).$$
\end{definition}
\begin{remark}
Here, we follow the convention of labelling the vertices by the inverse permutations, see e.g.\ \cite{Kap}, which has the effect that the faces of permutohedra will be conveniently labelled by lists (or better unshuffles, see below), rather than by surjections.
\end{remark}
\textit{Example.} If $\sigma=3241$, then $\sigma^{-1}=4213$ and thus $\mathbf{v}_{\sigma}=(4,2,1,3)\in \mathbb{R}^4$.

\begin{definition}
The permutohedron $P_n$ is the convex hull of the set of points $\{\mathbf{v}_{\sigma}\in \mathbb{R}^n|\sigma\in S_n\}$, i.e.: $$P_n=\{\sum_{\sigma\in S_n}t_{\sigma}\mathbf{v}_{\sigma}\in \mathbb{R}^n|\sum_{\sigma\in S_n}t_{\sigma}=1,t_{\sigma}\geq 0\}.$$
See Figure \ref{permfig} for examples.

\begin{figure}
		\label{figure 1}
		\centering
		\includegraphics[width=150mm]{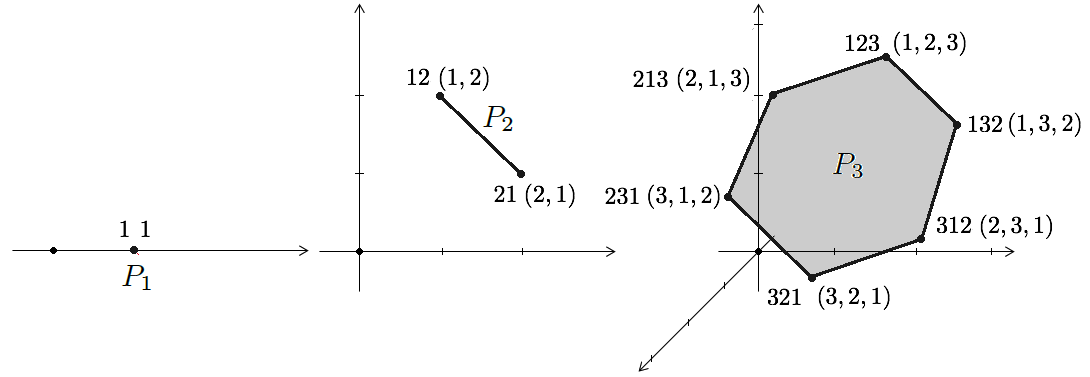}
		\caption{\label{permfig} The permutahedra $P_1$, $P_2$ and $P_3$.
		Each vertex is labelled $\sigma$ on the left and $\mathbf{v}_{\sigma}$ on the right.}
\end{figure}
\end{definition}

The permutohedron $P_n$ enjoys the following features which are readily checked:

\begin{enumerate}
\item $P_n$ is a polytope of dimension $n-1$.
\item The vertex set of $P_n$ is $\{\mathbf{v}_{\sigma}\in \mathbb{R}^n|\sigma \in S_n\}$.
\item $P_n$ is contained in the hyperplane $\{(x_1,\cdots,x_n)\in \mathbb{R}^n|x_1+\cdots+x_n=\frac{(n+1)n}{2}\}$.
\item
Two vertices $\mathbf{v}_{\sigma}$, $\mathbf{v}_{\tau}$ of $P_n$, $n\geq 2$ are adjacent if and only if $\mathbf{v}_{\tau}$ is obtained from $\mathbf{v}_{\sigma}$ by switching two coordinate values differing by 1 (or $\tau$ is obtained from $\sigma$ by switching two adjacent numbers in their image lists). In this case the Euclidean distance from $\mathbf{v}_{\sigma}$ to $\mathbf{v}_{\tau}$ is the minimal distance between two vertices, which is $\sqrt{2}$.
\item Its dimension $n-k$ faces are affinely isomorphic to $P_{m_1}\times P_{m_2}\times \cdots \times P_{m_k}$.
\end{enumerate}
\subsection{Notation for subsequences and unshuffles}
\subsubsection{Subsequences}
\label{permdefpar}

\begin{definition}
A \textit{subsequence} of length $k$ ($k\leq n$) of the sequence $\phi:[n]\rightarrow \mathbb{N}^+$ is a composite of functions $\phi\circ \psi$, where $\psi:[k]\hookrightarrow [n]$ is strictly increasing.  In the short hand notation,  $\phi\circ \psi$ is simply written as $\phi_{\psi_1}\phi_{\psi_2}\cdots \phi_{\psi_k}$. In particular,
for $n\geq 2$ and $\sigma\in S_n$, we define $\sigma\backslash \sigma_1$ be the subsequence $\sigma\circ \delta_1$ where $\delta_1:[n-1]\rightarrow [n]$ is the first face map  $\delta_1(i)=i+1$.
\end{definition}

So $\sigma\backslash \delta_1$ can also be written $\sigma_2\sigma_3\cdots \sigma_n$. Thus, $\sigma\backslash \sigma_1$ is obtained by removing the first term in the sequence $\sigma_1\sigma_2\cdots\sigma_n$.\\

\textit{Example.} If $\sigma\in S_4$ is the sequence $2341$ then $\sigma\backslash \sigma_1$ is $341$.

\subsubsection{Shuffles and unshuffles}
\begin{definition}
An \textit{unshuffle} of a sequence $\phi$ into $k$ subsequences of lengths $m_1,m_2,\cdots,m_k$ is an ordered list of subsequences $\mathbf{l}_1,\mathbf{l}_2,\cdots,\mathbf{l}_k$ of $\phi$
such that $|\mathbf{l}_i|=m_i$ and the disjoint union $\coprod_{i=1}^k\{\mathbf{l}_i\}$ equals $\{\phi\}$. We also call $\phi$ a \textit{shuffle} of $\mathbf{l}_1,\mathbf{l}_2,\cdots,\mathbf{l}_k$.

We define $\mathrm{dSh}_\phi[m_1,\cdots,m_k]$ to be the set of all unshuffless of
$\phi$ into subsequences of lengths $m_1,\cdots,m_k$,
$\mathrm{dSh}_\phi(k)=\coprod_{(m_1,\dots,m_k)}\mathrm{dSh}_\phi[m_1,\cdots,m_k]$ to be
the set of all unshuffless (or deshuffles) of $\phi$ into $k$ subsequences and $\mathrm{dSh}_\phi=\coprod_{k}\mathrm{dSh}_\phi(k)$ to be the set of all unshuffless of $\phi$.
\end{definition}

\begin{notation} We will use the following bar notation to give elements of $\mathrm{dSh}_\phi$(k). We write $\mathbf{l}_1|\mathbf{l}_2|\cdots|\mathbf{l}_k$, $k\geq 1$, for the list $\mathbf{l}_1, \mathbf{l}_2,\cdots, \mathbf{l}_k$, i.e.\ when $\phi$ is a shuffle of  $\mathbf{l}_1, \mathbf{l}_2,\cdots, \mathbf{l}_k$.

\end{notation}
\textit{Example.} Let $3214\in S_4$. Then $\mathrm{dSh}_{3214}[3,1]$ consists of the four elements: $321|4$, $324|1$, $314|2$ and $214|3$. And $\mathrm{dSh}_{3214}[2,2]$ consists of $32|14$, $31|24$, $34|21$, $21|34$,
$24|31$ and $14|32$.
\subsubsection{Grading and poset structure}
 We define the degree ($deg$) of elements in $\mathrm{dSh}_\phi(k)$ to be $|\phi|-k$.
This is the length of $\phi$ minus 1, minus the number of bars ($k-1$). It lies between
$|\phi|-1$ and $0$.

On lists there is the operation of merging lists. Given two sequences $\mathbf{l}_1,\mathbf{l}_2$
with disjoint domains $X_1,X_2$, we define $\mu(\mathbf{l}_1,\mathbf{l}_2):=\mathbf{l}_1\mathbf{l}_2$ to be the function $\mathbf{l}_2\amalg \mathbf{l}_2:X_1\amalg X_2\to \Np$. Note that in our shorthand notation the merging of two lists is exactly the juxtaposition given by removing
a bar.

The partial order $\prec$ on $\dSh_\phi$ is generated by removing bars and shuffling the lists.  More precisely, $\prec$ is the transitive closure of the relation

\begin{equation}
\label{partial order}
\mathbf{l}_1\cdots|\mathbf{l}_{i-1}|\mathbf{l}_i|\mathbf{l}_{i+1}|\mathbf{l}_{i+2}|\cdots|\mathbf{l}_k\mbox{ }\prec\mbox{ }\mathbf{l}_1|\cdots|\mathbf{l}_{i-1}|\mathbf{h}|\mathbf{l}_{i+2}|\cdots|\mathbf{l}_k,
\end{equation}
where $\mathbf{l}_{i-1}|\mathbf{l}_i\in dSh_{\mathbf{h}}(2)$ or simply $\mathbf{h}$ is a shuffle of $\mathbf{l}_{i-1} ,\mathbf{l}_i$.
It follows that the partial order decreases degree; that is if two elements that are in the relation $\mathbf{a}\prec \mathbf{b}$ then $deg(\mathbf{a})<deg(\mathbf{b})$.

\begin{notation}
\label{Jnota}
$\mathcal{J}_{\phi}$  will denote the poset $(\dSh_{\phi},\prec)$ and $\mathcal{J}^i_{\phi}$, $i=0,1,\cdots,|\phi|-1$ be the subset consisting
of elements of degree $i$ in $\mathcal{J}_{\phi}$.
\end{notation}

\textit{Example.} For $\sigma=145372896\in S_9$, we have $153|49|76|28 \mbox{ }\prec\mbox{ } 153|4796|28$, which are elements in $\mathcal{J}^5_{\sigma}$ and $\mathcal{J}^6_{\sigma}$, respectively.

\begin{remark}
Notice that any poset $\mathcal{J}_{\phi}$ represents a category
by setting $Hom(\mathbf{a},\mathbf{b}) = \{(\mathbf{a},\mathbf{b})\},$ if $(\mathbf{a}\preceq\mathbf{b})$. The category has a terminal element $\phi_1\cdots \phi_n$. One can formally add the one element set
$\mathcal{J}_{\phi}^{-1}$ and obtain an initial object.
\end{remark}

\subsubsection{Geometric realization}
\label{geometricpar}
We define a the geometric realization of $\mathcal{J}_{\phi}$
which is formally a functor $\mathcal{F}$ from $\mathcal{J}_{\phi}$ to the category of topological spaces and inclusions ---in fact,
 polytopes and face inclusions, which are inclusions of $\R^n\to \R^m$ and affine transformations. Although it would be more natural to order using $\phi$, to match the conventions 
 of faces given by lists, instead of surjections, we will use the inverse ordering. 
 
Let $\phi\in \Seq$ and $n=|\phi|$. Now $\phi$ is injective and
hence restricting it to its image, we get a map $\phi^{-1}:Im(\phi)\to [n]$. 
We let $\phi^{-1}_1,\dots ,\phi^{-1}_n$ be the ordered preimage,
that is $\phi^{-1}_1$ is $\phi^{-1}$ applied to the smallest image of $\phi$. In particular if $\phi=\sigma$ a permutation then $\phi^{-1}=\sigma^{-1}$, the inverse permutation and the notation agrees with the previous one.

Define  $\mathbf{v}_{\phi}=(\phi^{-1}_1,\phi^{-1}_2,\dots,\phi^{-1}_n)\in \R^n$.
Then $\mathcal{F}$ is  defined by $$\mathcal{F}(\phi_1|\phi_2|\cdots|\phi_n)=\mathbf{v}_{\phi}$$ on degree $0$ elements and $\mathcal{F}(\mathbf{a})$ is defined to be the convex hull of $\{\mathcal{F}(\mathbf{b}\in \mathbf{R}^n|\mathbf{b}\in \mathcal{J}^0_{\phi}, \mathbf{b} \preceq \mathbf{a}\}$ for general $\mathbf{a}\in \mathcal{J}_{f}$.
Finally, we define $\mathcal{F}$ on $\prec$ to be face inclusions. 
\begin{proposition}
 $\phi\in \Seq$ then $\mathcal{F}(\phi)$
is an $|\phi|-1$ dimensional polytope, whose dimension $i$ faces correspond to elements of $\mathcal{J}_{\phi}^i$.
In particular, for a permutation $\sigma:[n]\hookrightarrow[n]$:
$\mathcal{F}(\mathcal{J}_{\sigma})=P_n$.
\end{proposition}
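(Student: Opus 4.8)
The plan is to reduce to the case where $\phi$ is a permutation and then match, step by step, the combinatorial data of the poset $\mathcal{J}_\phi$ with the classical face lattice of $P_n$.

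\emph{Reduction and dictionary.} First I would observe that the realization depends on $\phi$ only through its standardization $\sigma=\mathrm{std}(\phi)\in S_n$, the unique permutation with $\sigma_i<\sigma_j\iff\phi_i<\phi_j$: relabelling each value $\phi_i$ by its rank $\sigma_i$ gives an isomorphism $\mathcal{J}_\phi\xrightarrow{\sim}\mathcal{J}_\sigma$ of posets that preserves bars and degree, and unwinding the definitions in \S\ref{geometricpar} shows $\mathbf v_\phi=\mathbf v_\sigma$, so the functors $\mathcal F$ on $\mathcal J_\phi$ and on $\mathcal J_\sigma$ literally coincide. Hence we may assume $\phi=\sigma\in S_n$, $n=|\phi|$. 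Next I record the dictionary to be used: an unshuffle $\mathbf l_1|\cdots|\mathbf l_k\in\dSh_\sigma(k)$ is the same datum as the ordered set partition $(\{\mathbf l_1\},\dots,\{\mathbf l_k\})$ of $[n]$, since the internal order of each $\mathbf l_i$ is forced by $\sigma$; it has degree $n-k$, and I write $m_i=|\mathbf l_i|$. Reading off the relation~\eqref{partial order}, the degree-$0$ elements $\mathbf b\preceq\mathbf l_1|\cdots|\mathbf l_k$ are exactly the $\tau\in S_n\cong\mathcal J^0_\sigma$ whose image list $\tau_1\cdots\tau_n$ is a rearrangement of $\{\mathbf l_1\}$, followed by one of $\{\mathbf l_2\}$, \dots, followed by one of $\{\mathbf l_k\}$.

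\emph{Realizing $\mathcal F(\mathbf a)$ as a face and computing its dimension.} Fix $\mathbf a=\mathbf l_1|\cdots|\mathbf l_k$. Since the $j$-th coordinate of $\mathbf v_\tau$ is $\tau^{-1}(j)$, i.e.\ the position of the value $j$ in the list $\tau$, a weight vector $w$ that is constant on each block $\{\mathbf l_i\}$ with the block-values strictly increasing in $i$ has the property that $x\mapsto\langle w,x\rangle$ is maximized over $P_n$ precisely at the vertices $\mathbf v_\tau$ described above — an elementary exchange argument: transposing an earlier-block value lying to the right of a later-block value strictly raises $\langle w,\cdot\rangle$. Hence $\mathcal F(\mathbf a)=\{x\in P_n:\langle w,x\rangle\text{ maximal}\}$ is a genuine face of $P_n$. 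On that face the coordinates split into $k$ groups, the group indexed by $\{\mathbf l_i\}$ taking as a set the values $\{m_1+\cdots+m_{i-1}+1,\dots,m_1+\cdots+m_i\}$ in every order, so the face is affinely $P_{m_1}\times\cdots\times P_{m_k}$ (feature~(5)); its dimension is $\sum_i(m_i-1)=n-k=\deg(\mathbf a)$. Alternatively one checks that the order ideal $\{\mathbf b\preceq\mathbf a\}$ is, as a poset, $\mathcal J_{\mathbf l_1}\times\cdots\times\mathcal J_{\mathbf l_k}$ and runs an induction on $n$ via feature~(5).

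\emph{Bijectivity, naturality, and the conclusion.} Distinct unshuffles of a given degree give distinct faces, since the face recovers for each $i$ which coordinates are ``small'' and hence recovers the ordered partition; conversely every face of $P_n$ arises this way, by the classical description of the face lattice of the permutahedron (its normal fan is the braid arrangement; cf.\ \cite{Kap}), and as a cross-check $|\mathcal J^{\,n-k}_\sigma|$ equals the number of surjections $[n]\twoheadrightarrow[k]$, the known count of $(n-k)$-faces of $P_n$. Thus $\mathbf a\mapsto\mathcal F(\mathbf a)$ restricts to a bijection of $\mathcal J^i_\sigma$ with the $i$-faces of $P_n$. If $\mathbf a\preceq\mathbf a'$ the order ideals below them are nested, so $\mathcal F(\mathbf a)\subseteq\mathcal F(\mathbf a')$, and a face of a polytope contained in another face $G$ is automatically a face of $G$; hence $\mathcal F$ sends $\prec$ to face inclusions. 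Finally $\mathcal J_\sigma$ has terminal element $\sigma_1\cdots\sigma_n$, below which lies all of $\mathcal J^0_\sigma\cong S_n$, so $\mathcal F(\mathcal J_\sigma)=\mathcal F(\sigma_1\cdots\sigma_n)=\mathrm{conv}\{\mathbf v_\tau:\tau\in S_n\}=P_n$, of dimension $n-1$. The main obstacle is the middle step: translating the combinatorics of \S\ref{geometricpar} into geometry while keeping the inverse-permutation labelling straight — in particular fixing the correct direction of $w$ and pinning $\dim\mathcal F(\mathbf a)=n-k$; once this and the dictionary are in place, the remaining arguments are formal, modulo the standard facts about $P_n$ recalled in \S\ref{permutasec}.
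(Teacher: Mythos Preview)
Your proposal is correct and is essentially what the paper's proof amounts to: the paper merely reduces to $\sigma=id$ and then defers to the standard references \cite{Ber97,CM,Kap,Mil,MSS,Zie}, whereas you actually spell out the standard argument those references contain (ordered set partitions $\leftrightarrow$ faces of $P_n$ via a block-constant linear functional and the rearrangement/exchange inequality). Your reduction from a general $\phi\in\Seq$ to its standardization $\sigma\in S_n$ is a clean way to handle the first sentence of the statement, and the remainder matches the classical face-lattice description of the permutahedron; so there is no genuine difference in approach, only in level of detail.
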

In the latter equality the data of $\sigma$ is present in the labellings.

\begin{proof}
One can reduce to $\sigma=id$ and then
we refer the reader to  \cite{Ber97}, \cite{CM}, \cite{Kap}, \cite{Mil}, \cite{MSS} and \cite{Zie} for a proof.
\end{proof}
The example of the labelling of  $P_{\sigma}$ for $\sigma=1234\in S_4$ is given in Figure 3.2.

\begin{figure}
		\centering
		\includegraphics[width=150mm]{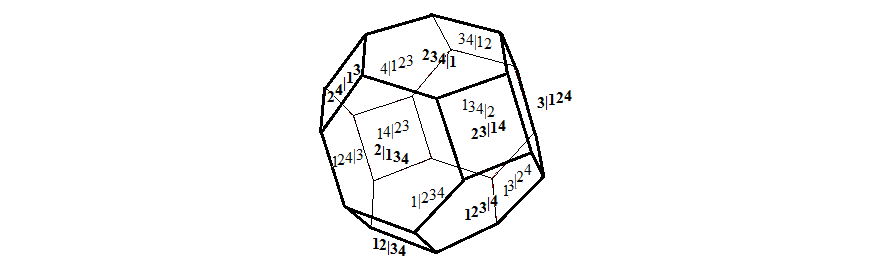}
		\caption{The codimension 1 faces of $P_4$ and their indexing elements in $\mathcal{J}^2_{1234}$. Visible faces are labelled by bold-faced numbers. The faces of the types $abc|d$, $ab|cd$ and $a|bcd$ are affinely isomorphic to $P_3\times P_1$, $P_2\times P_2$ and $P_1\times P_3$, respectively.}
\end{figure}

$F(\mathbb{R}^2,n)$ deformation retracts to a space which is obtained by gluing $n!$ copies of $P_n$. We first describe the gluing data through a poset $\mathcal{J}_n$ which contains all the $n!$ posets $\mathcal{J}_{\sigma}$ introduced in the previous chapter.

\subsection{Milgram's model via the poset $\mathcal{J}(n)$}

\begin{definition}
As a set, the poset $\mathcal{J}(n)$ equals the  union $\bigcup_{\sigma\in S_n}\mathcal{J}_{\sigma}$. The partial order of $\mathcal{J}(n)$ is defined the same way as that in \eqref{partial order}.
\end{definition}
Notice that we are dealing with the union and not the disjoint union. So elements of $\mathcal{J}_{\sigma}$ and $\mathcal{J}_{\sigma'}$ can become identified. This leads to a different poset structure.

\textit{Example.} $1234$ is the only element in $\mathcal{J}_{1234}$ that is greater than $13|24$. But in $\mathcal{J}(4)$, the elements greater than $13|24$ are $1324$, $1234$, $1243$, $2134$, $2143$ and $2413$.\\

\begin{definition}
We extend $\mathcal{F}$ from  $\mathcal{J}_{\sigma}$, $\sigma\in S_n$ to $\mathcal{J}(n)$ naturally by setting $$\mathcal{F}(n):=\mathrm{colim}_{\mathcal{J}(n)}\mathcal{F}.$$
\end{definition}

This means that as a topological space $\mathcal{F}(n)$ is obtained by gluing $n!$ copies of $P_n$ along their proper faces according to the partially order set $\mathcal{J}(n)$. Alternatively, we can write $$\mathcal{F}(n)=\left(\coprod_{\sigma\in S_n}P_n\right)/\sim_{\mathcal F},$$ where for $x \in P_n$ indexed by $\sigma$ and $y \in P_n$ indexed by $\tau$, $x\sim_{\mathcal{F}}y$ if there is $\mathbf{a}\in \mathcal{J}_{\sigma}\cap\mathcal{J}_{\tau}$ such that $x$ and $y$ have the same coordinates in $\mathcal{F}_{\mathbf{a}}$ (we simply write $x=y\in \mathcal{F}_{\mathbf{a}}$ in the future).\\

\subsection{Permutahedral structure of $F(\mathbb{R}^2,n)$: A theorem of Blagojevi\'{c} and Ziegler}

\begin{theorem}[\cite{BZ}]
$\mathcal{F}(n)$ is homeomorphic to a strong deformation retract of $F(\mathbb{R}^2,n)$.
\end{theorem}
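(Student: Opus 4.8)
The plan is to identify $\mathcal{F}(n)$ with the Salvetti complex of the braid arrangement and then quote Salvetti's classical deformation--retraction theorem for complexified real arrangements; I will also indicate a more self--contained route through the Fox--Neuwirth stratification.

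\emph{Set-up and splitting off a Euclidean factor.} Tautologically $F(\mathbb{R}^2,n)$ is the complement $M(\mathcal{A}_n^{\mathbb{C}})$ of the complexified braid arrangement $\mathcal{A}_n=\{z_i=z_j : 1\le i<j\le n\}$ in $\mathbb{C}^n$. This arrangement is central but not essential (every hyperplane contains the diagonal line), so I would pass to the essential arrangement $\bar{\mathcal{A}}_n$ in $\mathbb{C}^n/\langle(1,\dots,1)\rangle\cong\mathbb{C}^{n-1}$. Translation of configurations gives a product decomposition $F(\mathbb{R}^2,n)\cong\mathbb{C}\times M(\bar{\mathcal{A}}_n^{\mathbb{C}})$ in which the $\mathbb{C}$-factor is contractible and strong--deformation--retracts to a point in the obvious equivariant way. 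Hence it suffices to realize $\mathcal{F}(n)$ as a strong deformation retract of $M(\bar{\mathcal{A}}_n^{\mathbb{C}})$ and then follow it by the contraction of the $\mathbb{C}$-factor.

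\emph{Combinatorics: $\mathrm{Sal}(\bar{\mathcal{A}}_n)\cong\mathcal{F}(n)$.} The faces of the real braid arrangement are indexed by ordered set partitions of $[n]$, its chambers by elements of $S_n$, and a face sits in the closure of a chamber exactly when the chamber refines the partition. Recording, for such an incident pair $(F,C)$, the linear order that $C$ induces on each block of $F$, one obtains a bijection between the cells $[F,C]$ of the Salvetti complex $\mathrm{Sal}(\bar{\mathcal{A}}_n)$ and the word-lists $\mathbf{l}_1|\cdots|\mathbf{l}_k$ making up the underlying set $\bigcup_{\sigma}\mathcal{J}_\sigma$ of $\mathcal{J}(n)$; under this bijection the cell dimension $\mathrm{codim}(F)=n-k$ matches the degree $|\phi|-k$. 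I would then verify that the Salvetti face order on pairs $[F,C]$ translates exactly into the relation generated by merging adjacent blocks and shuffling that defines $\prec$ in \eqref{partial order}; equivalently, that for a fixed chamber the incident cells assemble into a copy of the permutahedron $P_n$ carrying precisely the face-labelling by $\mathcal{J}_\sigma$, and that two such copies are glued along the cells indexed by word-lists in $\mathcal{J}_\sigma\cap\mathcal{J}_\tau$, i.e.\ along exactly $\sim_{\mathcal{F}}$. Since a regular CW complex is determined up to homeomorphism by its face poset, this gives a homeomorphism $\mathrm{Sal}(\bar{\mathcal{A}}_n)\cong\mathcal{F}(n)$.

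\emph{Conclusion, and an alternative.} By Salvetti's theorem the Salvetti complex of any real arrangement embeds in the complexified complement as a strong deformation retract; applied to $\bar{\mathcal{A}}_n$ and composed with the $\mathbb{C}$-contraction above, this embeds $\mathcal{F}(n)$ in $F(\mathbb{R}^2,n)$ as a strong deformation retract, as claimed. If one prefers to avoid Salvetti's theorem as a black box, the same conclusion is reachable by hand: stratify $F(\mathbb{R}^2,n)$ Fox--Neuwirth-style by the combinatorial type ``vertical clusters, listed left to right; within each cluster the points listed bottom to top'', which is again an element of $\mathcal{J}(n)$, and deformation retract onto the compact core obtained by a fibrewise--affine, stratification--compatible homotopy that moves the cluster $x$-coordinates to the barycentric positions they occupy on $P_n$ and normalizes the within--cluster $y$-spreads; the core is then glued precisely as $\sim_{\mathcal{F}}$. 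The main obstacle is the poset/stratification bookkeeping: in the Salvetti picture one must check on the nose that the combinatorially defined Salvetti order coincides with the merge--and--shuffle order \eqref{partial order} --- in particular that a face shared by two cells is attributed to the correct copy of $P_n$ --- while in the Fox--Neuwirth picture the delicate point, as always with stratified retractions, is continuity of the normalizing homotopy across the walls where clusters merge or split. The remaining ingredients --- splitting off the $\mathbb{C}$-factor, contractibility of the individual strata, and upgrading a continuous bijection of compact Hausdorff spaces to a homeomorphism --- are routine.
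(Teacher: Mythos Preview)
Your proposal is correct but follows a genuinely different route from the paper's sketch, which simply summarizes the argument of Blagojevi\'{c}--Ziegler~\cite{BZ}. Their proof is geometric: first translate configurations so the barycenter is at the origin (your $\mathbb{C}$-factor splitting), then use the Fox--Neuwirth stratification of the resulting punctured space by relatively open polyhedral cones, choose one point per cone to cut out a star-shaped PL cell, radially retract onto its boundary, and finally pass to the Poincar\'{e}--Alexander dual complex of the configuration part relative to its complement; this dual carries the reversed Fox--Neuwirth order, which is exactly $\mathcal{J}(n)$.

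Your main approach via the Salvetti complex is cleaner once one is willing to invoke hyperplane-arrangement machinery: the identification $\mathrm{Sal}(\bar{\mathcal{A}}_n)\cong\mathcal{F}(n)$ is standard, and Salvetti's theorem does the analytic work. The Blagojevi\'{c}--Ziegler argument, by contrast, is more hands-on and was designed to work uniformly for $F(\mathbb{R}^k,n)$ for all $k$ (not just $k=2$), which was needed for their application to the Nandakumar--Ramana Rao conjecture; it also avoids citing Salvetti as a black box. Your alternative Fox--Neuwirth sketch is close in spirit to the paper's route, but differs at the last step: you propose a direct stratification-compatible normalizing homotopy, whereas \cite{BZ} instead passes through the star-shaped cell and then takes a Poincar\'{e}--Alexander dual, which sidesteps precisely the continuity-across-walls issue you flag as delicate.
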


\textit{Remark.} That $\mathcal{F}(n)$ and $F(\mathbb{R}^2,n)$ have the same homotopy type was known before this theorem. For example, \cite{Ber97} showed this by establishing a zig-zag   connecting $\mathcal{F}(n)$ and $F(\mathbb{R}^2,n)$. But this theorem is  stronger: it shows that one is actually the deformation retract of the other. In fact, \cite{BZ} described regular CW complex models which are homeomorphic to deformation retracts of the configuration spaces $F(\mathbb{R}^k,n)$ for all $k,n\geq 1$, which were used in their proof when $n$ is a prime power of the conjecture of Nandakumar and Ramana Rao that every polygon can be partitioned into $n$ convex parts of equal area and perimeter. The same CW complex models were also studied in \cite{Ber97} and \cite{GJ} and they were called the Milgram's permutahedral model in \cite{Ber97}.
We briefly review the proof of the above theorem here.

\begin{proof}[Sketch of proof according to \cite{BZ}.]
 First, $\mathbb{R}^{2n}$ deformation retracts to the subspace $W_n^{\oplus 2}$ in which the geometric center of each configuration is shifted to the origin. We denote this retraction by $r_1$. Then $W^{\oplus 2}_n\backslash 0^n$ is partitioned into relatively open infinite polyhedral cones. These cones give the Fox-Neuwirth stratification of $W^{\oplus 2}_n\backslash 0^n$ and they constitute a partially ordered set. Next, a relative interior point for each cone is chosen. These points yield the vertices of a star-shaped PL cell. Then $W^{\oplus 2}_n\backslash 0^n$ radially deformation retracts to the boundary of this PL cell. We denote this retraction by $r_2$. Finally, the Poincar\'{e}-Alexander dual complex of $r_2\circ r_1 (F(\mathbb{R}^2,n))$ relative to $r_2 \left(W^{\oplus 2}_n\backslash 0^n\backslash r_1(F(\mathbb{R}^2,n))\right)$ is constructed, which is a deformation retract of $r_2\circ r_1 (F(\mathbb{R}^2,n))$. Let this third retraction be $r_3$. In conclusion, $F(\mathbb{R}^2,n)$ deformation retracts to $r_3\circ r_2\circ r_1 (F(\mathbb{R}^2,n))$, which has a partially ordered set structure with the partial order the reverse of that of the Fox-Neuwirth stratification. This partially ordered set is precisely $\mathcal{J}(n)$ and $\mathcal{F}(n)$ is homeomorphic to $r_3\circ r_2\circ r_1 (F(\mathbb{R}^2,n))$.

\end{proof}

\section{The operad of Spineless Cacti}
\label{spineslesssec}
\subsection{The spineless cacti operad $Cact$ and its normalized version $Cact^1(n)$}
The operad of spineless cacti $Cact$ was introduced in \cite{Kau02}. We first briefly review $Cact=\{Cact(n)\}_{n\geq 1}$ using the intuitive picture of cacti from \cite{Vor2}. Although very intuitive, this description is unexpectedly hard to make precise topologically. A better way to
define the spaces
is to first define  CW complexes $Cact^1$, the spaces of normalized spineless cacti, which correspond to the restriction to lobes of radius $1$ and then extend to all positive radii by taking products with $\R_+^n$ \cite{Kau02} .

\subsubsection{Pictorial description}
Roughly a cactus \cite{Vor2} is an isotopy class of tree--like configuration of circles in the plane with a given base point. Here a circle is an orientation preserving embedding
$S^1_r\rightarrow \mathbb{R}^2$, where $S^1_r=\{x^2+y^2=r^2\}$ and the isotopies should preserve the incidence relations. The circles are also called lobes. The images of the base points are called local roots or zeros and the root is called a global zero. To be a spineless cactus means that any local zero is the  unique  intersection point of the lobe with  the lobe closer to the global zero (this exists due to the tree-like structure). An element of $Cact(5)$ is given in Figure \ref{cactfig}.

\begin{figure}
		\centering
		\includegraphics[width=150mm]{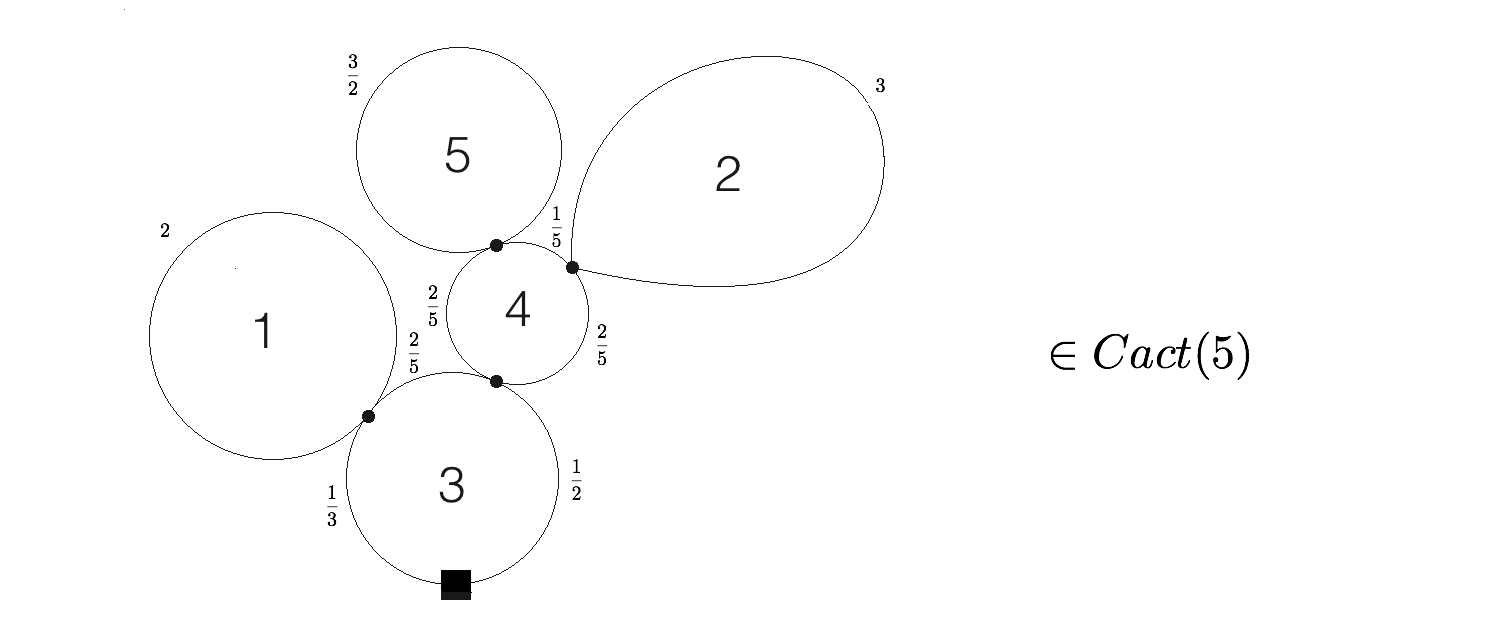}
		\caption{\label{cactfig}A representative of an element in $Cact(5)$  representing isotopy class of orientation and intersection parameter preserving embeddings of the five standard circles of radii $2,3,\frac{37}{30},1,\frac{3}{2}$ such that the images form a rooted planted tree-like configuration of circles. The local zeros are denoted by black dots. The global zero is denoted by a black square.}
\end{figure}

 Notice that for any $c\in Cact(n)$, if one starts from the root vertex (the black square) and travel around the perimeter of the configuration then one will eventually come back to the root vertex. The path travelled gives a map from $S^1$ to the configuration and is  called the outside circle.
\subsubsection{CW-complex}
First notice  that a configuration as described above gives rise to a b/w rooted bi--partite graph $\tau$. The white vertices are the lobes, the black vertices are the the zeros, with the global zero being the root.
A black vertex is joined by an edge to a white vertex if the corresponding point lies on the lobe. Tree--like means that the graph is a tree. This tree is also planar, since the configuration was planar. A cactus is spineless if the local zero is at the unique intersection point nearest the root, and hence can be ignored.  We now turn this observation around to make a precise definition.

Each $Cact^1(n)$ is a CW regular complex whose cells are indexed by
 planted planar black and white bi--partite trees with a black root and white leaves and a total of $n$ labelled white vertices.
  The open cell $\mathring{C}_{\tau}$ indexed by the tree $\tau$ is defined as the product of open simplices $\prod_{i=1}^n\mathring{\Delta}^{|v_i|-1}$,
  where $|v_i|$ is the number of incident edges of the white vertex labelled $i$. The number of incoming edges or the arity is then $|v_i|-1$. The closure $C(\tau)$ equals $\prod_{i=1}^n\Delta^{|v_i|-1}$ and it is attached by collapsing angles at white vertices, see \cite{Kau02} and Figure \ref{collapsefig} for details.  This angle collapse  corresponds to the contraction of an arc of a lobe, e.g.\ the arc labelled by $\frac{1}{2}$ or $\frac{2}{5}$ in  Figure \ref{cactfig}. These arc--labels correspond to the barycentric coordinates of the simplices. The attaching map can then be understood as sending one of these co--ordinates to zero, removing this co--ordinate and identifying the result with the barycentric coordinates of the tree obtained by collapsing the angle.
\begin{figure}[h]
		\centering
		\includegraphics[width=150mm]{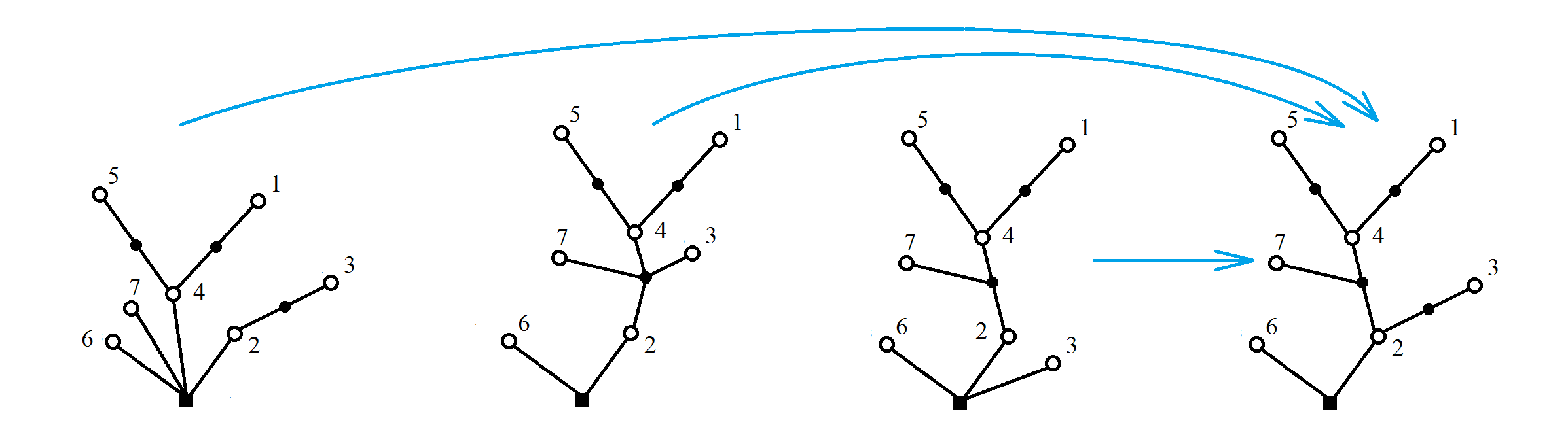}
		\caption{\label{collapsefig}Example of angle collapses having the same target $\tau$.}
\end{figure}

The space $Cact(n)$ is the product $Cact^1(n)\times \mathbb{R}^n_{>0}$ with the product topology.
 Note that $Cact(n)$ naturally deformation retracts to $Cact^1(n)$.

\subsubsection{Grading}

For a tree $\tau$ we define its degree as $i=dim(C(\tau))=\sum_i (|v_i|-1)$.
Let $\mathcal{T}^i_n$ be the subset of $\mathcal{T}_n$ of degree $i$. $\mathcal{T}^0_n$ consists of the minimal degree elements in $\mathcal{T}_n$ and $\mathcal{T}^{n-1}_n$ the maximal degree elements. $\mathcal{T}^0_n$ is also the set of trees indexing the spineless corolla cacti $SCC(n)$ \cite{Kau02}. We let $scc(\sigma)$ be the element in $\mathcal{T}^0_n$ shown in Figure \ref{sccfig}. 

\begin{figure}
		
		\centering
		\includegraphics[width=150mm]{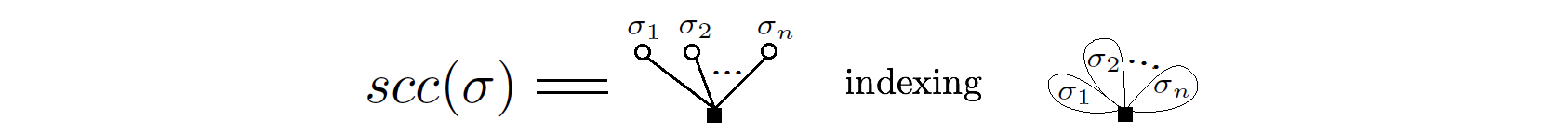}
		\caption{\label{sccfig}The spineless corolla cactus element $scc(\sigma)$ for $\sigma\in S_n$}
\end{figure}

 \subsubsection{Operadic structure} Although not strictly needed for the present discussion, we give the operad structure of this $E_2$ operad using the intuitive picture.
 Given $c_1\in Cact(m)$ and $c_2\in Cact(n)$, $c_1\circ_i c_2\in Cact(m+n-1)$ is obtained by rescaling the outside circle of $c_2$ to that of the $i$'th circle of $c_1$ and then identifying the outside circle of the resultant configuration to the $i$'th lobe of $c_1$. $S_n$ acts on $Cact(n)$ by permuting the labels.

One can check that the above structures make $Cact$ an operad (more precisely, pseudo-operad).

\begin{figure}[!h]
		\centering
		\includegraphics[width=150mm]{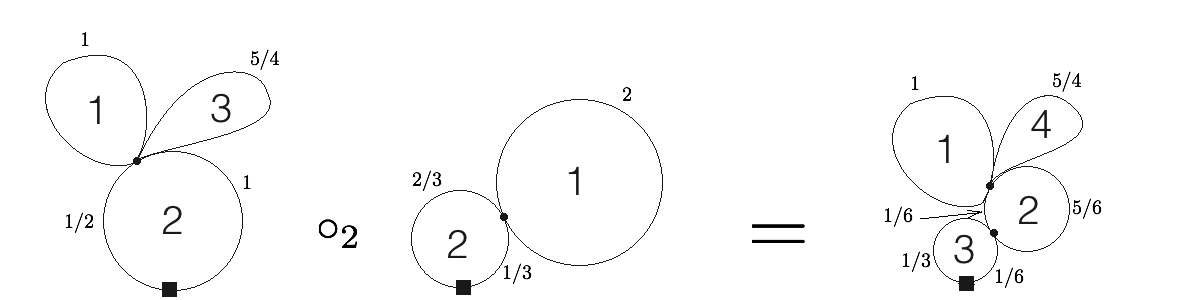}
		\caption{An example of operadic insertion $\circ_2: Cact(3)\times Cact(2)\rightarrow Cact(4)$.}
\end{figure}

\subsubsection{Summary}
Let $\mathcal{T}_n$ denote the partially ordered set of  planar planted bipartite (black and white) trees with white leaves, a black root, and $n$ white vertices labelled from $1$ to $n$. Denote by $\T_n^i$ the subset of trees with degree $i$. Then as a  stratified set
\begin{equation}
Cact^1(n)=\coprod_{\tau\in \mathcal{T}_n}\mathring{C}_{\tau}
\end{equation}
and as a space
\begin{equation}
\label{glueeq}
Cact^1(n)=\coprod_{\tau\in \mathcal{T}_n}C(\tau)/ \sim=\coprod_{\tau\in \mathcal{T}_n^{n-1}}C(\tau)/ \sim
\end{equation}
where $x\sim y$ is in the closure of the  relation induced by the attaching maps.
The last equation is true, since all points are included in some top--dimensional cell.

\subsection{Reformulation of $Cact^1(n)$ as the colimit of a poset}
The main result in this section is that the angle collapse actually gives a poset structure to $\Tn$. Moreover, since gluing procedures are alternatively described by relative co--products, we can ultimately describe $Cact^1$ as a colimit over a poset category of a realization functor.

We say that $\t{\angle} \t'$ if $\t$ can be obtained from $\t'$ by an angle collapse.

\begin{definition}
Let $\precT$ be the partial order obtained from the transitive closure of  the relation $\angle$ on  $\mathcal{T}_n$ induced by angle collapse.
\end{definition}

Again, $\t\precT\t'$ implies $deg(\t)<deg(\t')$ and the minimal elements form the set $SCC(n)$ and the maximal elements are those of $\Tn^{n-1}$.

Let $C$ be the following functor from the poset category $(\mathcal{T}_n,\precT)$ to the category of topological spaces. That is for each pair $\t\preceq_T\t'$ there is a unique arrow $\t\to\t'$.

\begin{enumerate}
\item For $\tau\in \mathcal{T}_n$, $C(\tau)$ is defined as before: $C(\tau)=\Delta^{w_1}\times \Delta^{w_2}\times \cdots \times \Delta^{w_n}$, where $w_i$ is the number of incoming edges to the white vertex labelled by $i$.
\item If $\tau\angle \tau'$, and $\tau$ is obtained from $\tau'$ by collapsing the angle between the  $j$th and the $(j+1)$th incoming edges of the white vertex $i$ (where we define the $0$th and the $(w_i+1)$th incoming edges to be the outgoing edge of this white vertex), then  
$$C(\t\angle\t'
)=\mathrm{id}_{\Delta^{w_1}}\times \cdots\times \mathrm{id}_{\Delta^{w_{i-1}}}\times s_j\times \mathrm{id}_{\Delta^{w_{i+1}}} \times\cdots\times \mathrm{id}_{\Delta^{w_n}},$$ where $s_j$ is the j--th degeneracy map
\begin{equation*}
\begin{array}{ccccc}
s_j&:& \Delta^{w_i-1} & \rightarrow & \Delta^{w_i}\\
          & & (t_0,t_1,\cdots,t_{w_i-1}) & \mapsto & (t_0,t_1,\cdots,t_{j-1},0,t_j,\cdots,t_{w_i-1}).
\end{array}
\end{equation*}
\end{enumerate}

Then it follows from  \eqref{glueeq} that:
\begin{proposition}
$$Cact^1(n)=\mathrm{colim}_{\mathcal{T}_n}C.$$
\qed
\end{proposition}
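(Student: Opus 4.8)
The plan is to show that the two presentations of $Cact^1(n)$ coincide, namely the one given in \eqref{glueeq} as a quotient of $\coprod_{\tau}C(\tau)$ by the relation generated by attaching maps, and the one given as $\mathrm{colim}_{\mathcal{T}_n}C$ over the poset category. Both are quotients of the \emph{same} disjoint union $\coprod_{\tau\in\mathcal{T}_n}C(\tau)$, so the task reduces to checking that the two equivalence relations one quotients by are identical. First I would unwind the definition of the colimit: since $(\mathcal{T}_n,\precT)$ is a poset category, $\mathrm{colim}_{\mathcal{T}_n}C$ is computed as the coequalizer $\coprod_{\tau\preceq_T\tau'}C(\tau)\rightrightarrows\coprod_{\tau}C(\tau)\to\mathrm{colim}$, i.e.\ as $\bigl(\coprod_{\tau}C(\tau)\bigr)/\approx$ where $\approx$ is the equivalence relation generated by identifying $x\in C(\tau)$ with $C(\tau\preceq_T\tau')(x)\in C(\tau')$ for every comparable pair. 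So I must verify that this generated relation is exactly the relation $\sim$ appearing in \eqref{glueeq}.

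The key observation is that $\precT$ is, by the preceding definition, the transitive closure of the covering relation $\angle$, and for a covering $\tau\angle\tau'$ the structure map $C(\tau\angle\tau')$ is precisely a product of one degeneracy $s_j$ with identities --- which is exactly the attaching map describing how the cell $\mathring C_\tau$ sits in the closure of $C(\tau')$ (an arc/angle collapse sends a barycentric coordinate to $0$). Thus the generators of $\approx$ coming from covering relations are literally the identifications induced by the attaching maps, so $\approx$ and $\sim$ have the same generating set; since both are defined as the equivalence relation (equivalently, the "closure" in the sense of \eqref{glueeq}) generated by that set, $\approx\;=\;\sim$. For non-covering comparable pairs $\tau\precT\tau'$ one uses functoriality of $C$ together with the fact that a degeneracy composed with a degeneracy is again handled consistently, so those identifications are already consequences of the covering ones and add nothing new; this is where I would invoke that $\precT$ is the transitive closure of $\angle$, i.e.\ every comparable pair is connected by a finite chain of covers. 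Having matched the relations, I would conclude $\mathrm{colim}_{\mathcal{T}_n}C=\bigl(\coprod_\tau C(\tau)\bigr)/\sim=Cact^1(n)$ by \eqref{glueeq}, and note that restricting the coproduct to $\mathcal{T}_n^{n-1}$ (the top cells), as in the second equality of \eqref{glueeq}, is the standard reduction since every point lies in some top-dimensional cell and the poset is such that maximal elements are cofinal over each cell.

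I would also remark that one should check the map is well-defined as a map of stratified spaces: the open cells $\mathring C_\tau$ of the colimit are the images of the open faces, matching the stratification $Cact^1(n)=\coprod_\tau \mathring C_\tau$, and the colimit topology (quotient topology from the finite disjoint union of compact polytopes) agrees with the CW topology on $Cact^1(n)$ --- this is automatic because both are quotients of the same compact Hausdorff source by the same relation. The main obstacle, such as it is, is bookkeeping rather than conceptual: one must be careful that the indexing convention for angles (the $0$th and $(w_i+1)$th "incoming edges" being the outgoing edge) makes the degeneracy $s_j$ in the functor $C$ agree on the nose with the coordinate-dropping in the attaching map of the CW structure, including the edge cases $j=0$ and $j=w_i$ where the collapsed angle is adjacent to the outgoing edge. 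Once that identification of the two descriptions of the single collapse map is pinned down, the proposition follows immediately from \eqref{glueeq}, which is why the proof can be given as just \qed.
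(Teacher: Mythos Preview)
Your argument is correct and is exactly the unpacking of what the paper leaves implicit: the proposition is stated with a bare \qed because it ``follows from \eqref{glueeq}'', and your proof spells out precisely why---the colimit over the poset is the quotient of $\coprod_\tau C(\tau)$ by the relation generated by the structure maps, and since $\precT$ is the transitive closure of $\angle$ and the $C(\tau\angle\tau')$ are exactly the attaching maps (degeneracy times identities), the two equivalence relations coincide. There is no difference in approach, only in level of detail.
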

An example of the gluing is given in Figure \ref{gluingfig}.
\begin{figure}
		\centering
		\includegraphics[width=150mm]{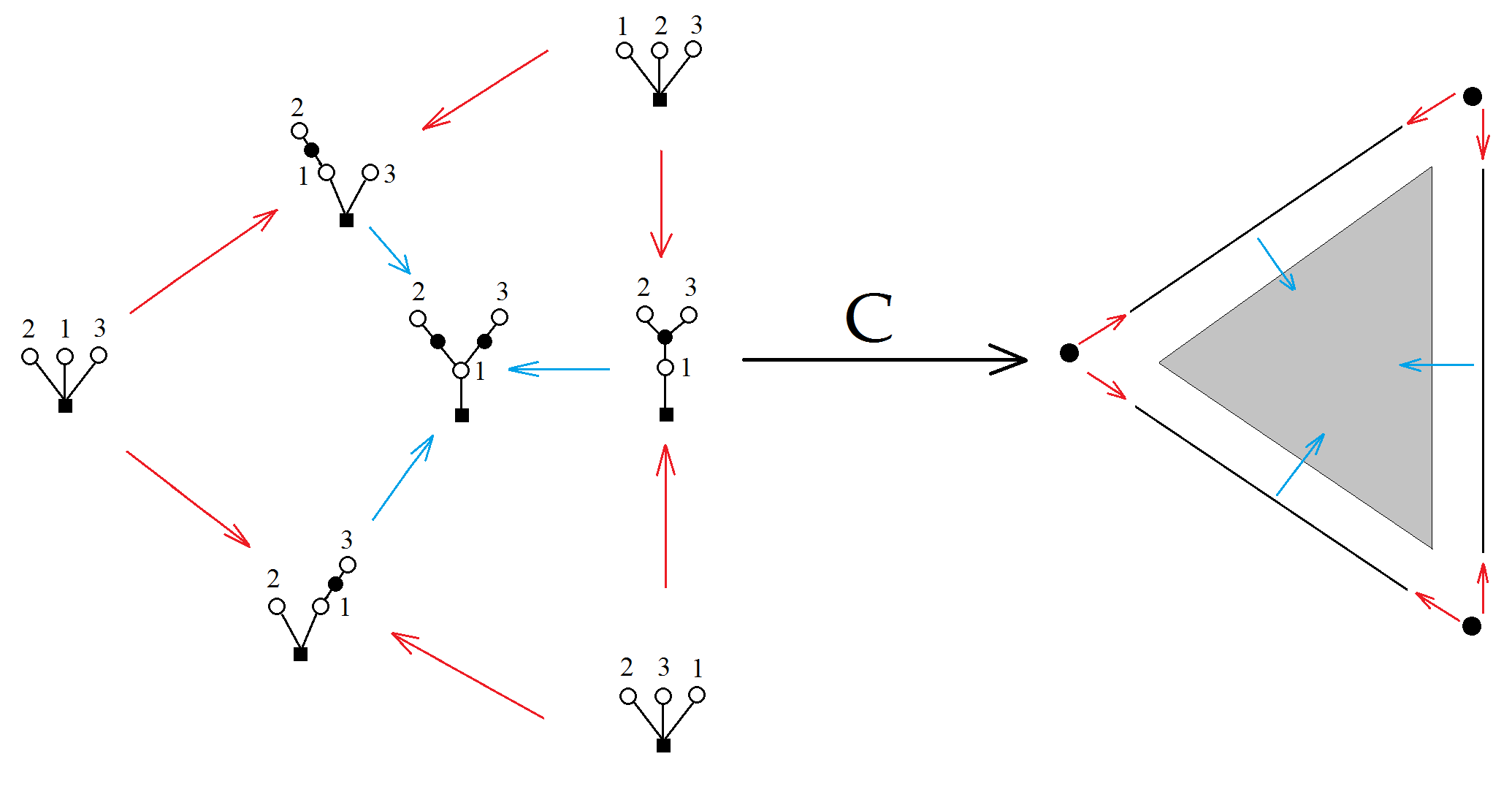}
		\caption{\label{gluingfig}A subposet of $\mathcal{T}_n$ and its image under the functor $C$.}
\end{figure}

\subsection{$Cact^1(n)$ as a poly--simplicial set}
What is actually obvious from this reformulation, but not stated explicitely in \cite{Kau02} is that $Cact^1$ is not only a regular CW complex, but the realization of a poly--semi--simplicial set. The poly--degeneracy maps are given by angle collapses.
\begin{proposition}
$\Tn$ is a poly--semi--simplicial set and $Cact^1=|\Tn|$. \qed
\end{proposition}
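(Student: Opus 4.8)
The plan is to unwind the definitions: a poly-semi-simplicial set is a presheaf on a product of (non-full) subcategories of the simplex category $\Delta$ — precisely the subcategory $\Delta_{\mathrm{inj}}$ generated by the coface maps together with all degeneracies (or, depending on convention, only the degeneracies $s_j$ and no cofaces, since ``semi'' drops the degeneracy maps on one side but here the angle collapses are degeneracies, so we keep $s_j$ and drop the $d_i$). Concretely I would set the indexing category to be the $n$-fold product $\Delta_s^{\times n}$, where $\Delta_s$ has objects $[m]$, $m\geq 0$, and morphisms generated by the codegeneracies; a poly-semi-simplicial set is then a functor $(\Delta_s^{\times n})^{\mathrm{op}}\to \mathbf{Set}$. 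The content of the proposition is that $\Tn$, together with the angle-collapse maps, is exactly such a functor, and that the topological realization of this functor (glue $\prod_i \Delta^{m_i}$ over the poly-degeneracy maps) reproduces $Cact^1(n)$.

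First I would define the poly-semi-simplicial set: in poly-degree $(m_1,\dots,m_n)$, let $\Tn(m_1,\dots,m_n)$ be the set of trees $\tau\in\Tn$ whose $i$-th white vertex has exactly $w_i = m_i$ incoming edges (equivalently, arity $m_i$). Next I would produce the poly-degeneracy operations: for each $i$ and each $0\leq j\leq m_i$, the map $s_j^{(i)}\colon \Tn(\dots,m_i,\dots)\to \Tn(\dots,m_i-1,\dots)$ ``un-collapses'' — but degeneracies go the other way, so rather the structure maps of the presheaf send $\tau$ in poly-degree $(\dots,m_i,\dots)$ to the tree obtained by the angle collapse at the $i$-th white vertex between the $j$-th and $(j+1)$-st incoming edges, landing in poly-degree $(\dots,m_i-1,\dots)$. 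One then checks these maps satisfy the (semi-)simplicial identities in each coordinate — $s_j s_k = s_{k} s_{j+1}$ for $j\geq k$ — and that operations in distinct coordinates $i\neq i'$ commute, which is immediate because angle collapses at different white vertices are performed independently. This is exactly the verification that the assignment $C$ of item (2) above respects composition, i.e.\ that $C$ is a well-defined functor on $(\Tn,\precT)$, combined with the fact that $\precT$ is generated by these commuting elementary collapses.

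Then the realization statement $Cact^1=|\Tn|$ is essentially a repackaging of the already-established $Cact^1(n)=\mathrm{colim}_{\mathcal{T}_n}C$ (the Proposition just before this one). Indeed, the geometric realization of a poly-semi-simplicial set $X$ is by definition $\left(\coprod_{(m_1,\dots,m_n)} X(m_1,\dots,m_n)\times \prod_i \Delta^{m_i}\right)/\!\sim$, where $\sim$ is generated by $(s_j^\ast x, t)\sim (x, s_j t)$ for the poly-degeneracies; since every tree is a poly-degeneracy of a (unique) non-degenerate tree, namely one in $\Tn^{n-1}$ with all $w_i$ maximal relative to... actually here ``non-degenerate'' just means no white vertex can have an angle un-collapsed, but all trees are ``non-degenerate'' in the semi-simplicial sense — the realization collapses along the images of the $s_j$, which is precisely the equivalence relation $\sim$ in \eqref{glueeq}. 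So the colimit formula and the realization formula have literally the same underlying set and the same quotient relation; matching $\prod_i\Delta^{w_i}$ with $C(\tau)$ and the degeneracy $s_j$ in item (2) with the $j$-th codegeneracy of $\Delta_s$ finishes it.

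The main obstacle — really the only non-formal point — is verifying the poly-semi-simplicial identities for the angle-collapse maps, i.e.\ that collapsing the angle between edges $j,j+1$ and then between $k,k+1$ agrees (after the appropriate index shift) with doing them in the other order, and that this is genuinely the relation $s_j s_k = s_k s_{j+1}$ inherited from $\Delta$. This is a small combinatorial check on planar trees, and it is already implicitly contained in the well-definedness of the functor $C$ and in the description of the attaching maps of $Cact^1$ in \cite{Kau02}; I would state it as such and refer back, since \eqref{glueeq} and the preceding Proposition have done the real work. I would also remark that the ``semi'' is essential: there are no face maps here because an angle collapse can be undone in several ways (different trees collapse to the same $\tau$, as in Figure \ref{collapsefig}), so $\Tn$ is not a poly-simplicial set, only a poly-semi-simplicial one with the degeneracies recording the collapses.
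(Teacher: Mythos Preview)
Your proposal is correct in substance and matches the paper's approach exactly --- the paper gives no argument beyond the bare \qed, treating the statement as an immediate repackaging of the preceding colimit description $Cact^1(n)=\mathrm{colim}_{\mathcal{T}_n}C$, and you have simply unpacked what that entails.

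One terminological wrinkle is worth straightening out, since it causes a small error and a confused closing remark. Angle collapses \emph{decrease} poly-degree, so in standard simplicial conventions they are \emph{face} maps, and the identity to verify is $d_i d_j = d_{j-1} d_i$ for $i<j$, not the degeneracy relation $s_j s_k = s_k s_{j+1}$ you wrote. The paper itself is non-standard here: the map it calls ``$s_j$, the $j$-th degeneracy map'' with formula $(t_0,\dots,t_{w_i-1})\mapsto(t_0,\dots,t_{j-1},0,t_j,\dots,t_{w_i-1})$ is in fact the coface inclusion $\delta^j:\Delta^{w_i-1}\hookrightarrow\Delta^{w_i}$, and this mislabeling has propagated into your write-up. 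Consequently your final paragraph gets the reason for ``semi'' backwards: the structure maps you \emph{have} are the faces (angle collapses), and what is \emph{missing} are the degeneracies --- there is no canonical way to insert an angle at a white vertex, which is the correct explanation. None of this affects the mathematical content, which is sound.
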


\section{A permutahedral cover for $Cact^1$}
\label{mainsec}

\subsection{Tools and setup}
In this section, we provide the necessary combinatorial tools for the statements and proofs. We introduce several partial and total orders in order to define $n!$ sub-posets, each of which corresponds to a permutahedron $P_n$.

\subsubsection{Total and partial orders}
For a given finite set $S$ with $|S|=n$ the linear orders on $S$ are in bijection with the set of bijective maps $\phi:[n]\to S$. In particular the linear orders on $[n]$ are in bijection with
 permutations $\sigma\in S_n$, the order being explicitly given by $\sigma_1< \cdots< \sigma_n$. We denote this linear order (total order) by $<_{\sigma}$.

Every rooted tree $\tau$ yields a partial order on its vertices by the height where the root is considered to be the lowest vertex.  The root is the unique minimal element and the leaves are the maximal elements.
For the trees in $\Tn$, by abuse of notation, we denote  by $\prec_{\tau}$ the induced partial order on the set of labels $[n]$ of the white white vertices.  We say $v \prec_{\tau} w$ if $w$ is above $v$. This is especially easy to read off the cactus picture.

\begin{definition}
On a given set $S$ a partial order $\prec$ is coarser than  $\prec'$,
if $a\prec b$ implies $a\prec'b$. If $\prec'$ is a total order $<$,
then we also say that $\prec$ is compatible with $<$.
\end{definition}
Notice that if $S$ is finite to show that $\prec$ is coarser that $\prec'$ one can simply check along
all maximal intervals $[s_1,s_2]$ w.r.t.\ $\prec$.

\subsubsection{The posets for $P_n$}

To describe the permutahedral structure of $Cact^1(n)$, for any $\sigma\in S_n$, we introduce the sub-poset $(\mathcal{T}_{\sigma},\precT)$ of $(\mathcal{T}_n,\precT)$ as follows.

\begin{definition}
\label{Tsigmadef}
The elements of $\mathcal{T}_{\sigma}$ are the trees in $\mathcal{T}_n$ such that $\prec_{\tau}$ is compatible with $<_{\sigma}$. The partial order of $\mathcal{T}_{\sigma}$ is the restriction of that of $\mathcal{T}_n$.
These sets inherit the degree splitting
 $\mathcal{T}^i_{\sigma}$, $i=0,1,\cdots,n-1$ of trees of  degree $i$.
\end{definition}

The  maximal intervals of $\prec_{\tau}$ correspond
the  leaf vertices and are given by the sequence of labels on the white vertices along the shortest path from the root vertex to this leaf vertex.
Thus $\prec_{\tau}$ is compatible with $<_\sigma$ if all these sequences are  subsequence of $\sigma_1\cdots\sigma_n$. Some examples of trees in $\mathcal{T}_{53214}$ are given in Figure \ref{Tsigmafig}.
\begin{figure}
		\centering
		\includegraphics[width=150mm]{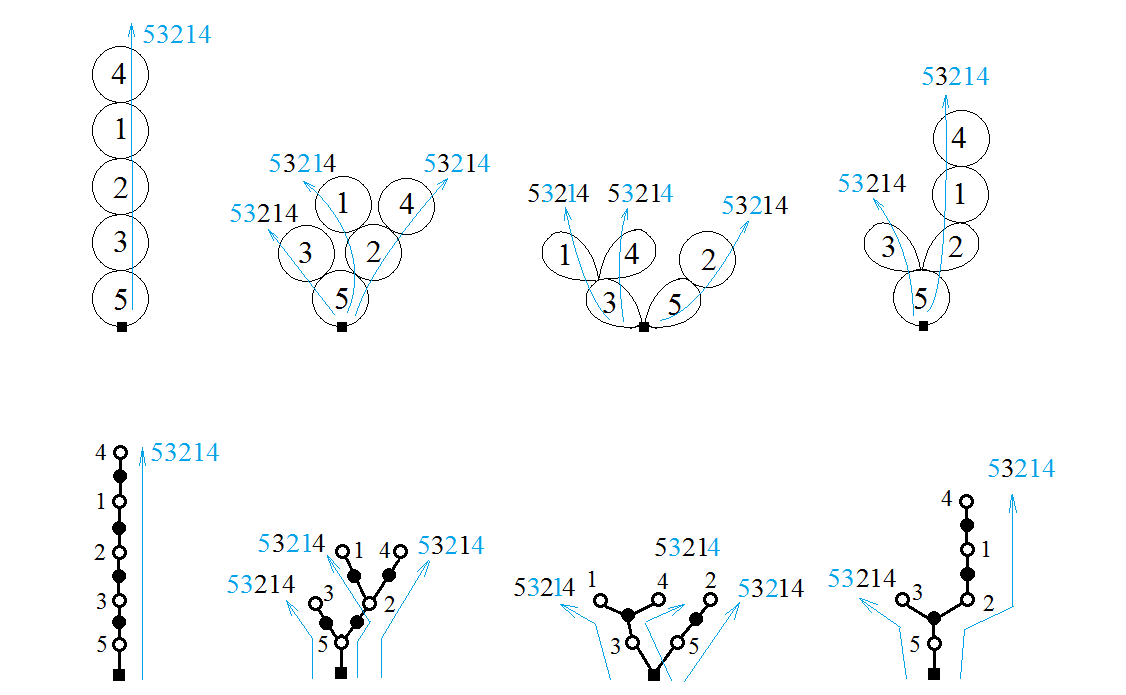}
		\caption{\label{Tsigmafig} Examples of trees in $\mathcal{T}_{53214}$ and the associated cacti pictures.}
\end{figure}
Notice that in $\T_\sigma$ there is a unique element which we call $\t_\sigma$ such
that the partial order $\prec_{\t_\sigma}=<_\sigma$.

Collapsing an angle between the leftmost/rightmost incoming edge with the outgoing edge of a white vertex makes the partial order on a tree coarser. Collapsing an angle between two adjacent incoming edges doesn't change the partial order on a tree. Thus, we have that:
\begin{lemma}
\label{mainlem}
The sub-posets $\T_\sigma$ are closed under angle collapses. That is if $\tau \in \T_\sigma$ and $\t'\prec \t$ then $\t'$ is also in $T_\sigma$\qed
\end{lemma}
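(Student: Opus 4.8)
The plan is to verify the lemma by analyzing what a single angle collapse does to the partial order $\prec_\tau$, since $\precT$ is the transitive closure of the one-step relation $\angle$, and it suffices to treat one collapse and then iterate. So suppose $\tau' \in \T_\sigma$ and $\tau \angle \tau'$, where $\tau$ is obtained from $\tau'$ by collapsing the angle at a white vertex $v_i$ between its $j$th and $(j+1)$th incoming edges (with the convention that the $0$th and $(w_i+1)$th ``incoming'' edges are the outgoing edge). I would split into two cases according to whether the collapsed angle is interior ($1 \le j \le w_i - 1$) or involves the outgoing edge ($j = 0$ or $j = w_i$).

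In the interior case, collapsing merges two incoming edges of $v_i$ into one; geometrically the two subtrees hanging off those edges get wedged together at $v_i$, but every white vertex that was above $v_i$ is still above $v_i$, and no new comparabilities between previously incomparable labels are created — the height partial order is literally unchanged, so $\prec_\tau = \prec_{\tau'}$ and compatibility with $<_\sigma$ is trivially preserved. In the boundary case, say $j = 0$ (the $j = w_i$ case is symmetric), collapsing the angle between the outgoing edge and the leftmost incoming edge has the effect of ``absorbing'' the white vertex at the other end of that leftmost incoming edge into $v_i$'s parent structure; more precisely, the subtree that was attached via the leftmost incoming edge of $v_i$ gets reattached one level down. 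The key observation, which the paragraph preceding the lemma already asserts, is that this makes $\prec_\tau$ \emph{coarser} than $\prec_{\tau'}$: some pairs $v \prec_{\tau'} w$ become incomparable in $\tau$, but no new strict inequalities appear. I would make this precise by describing the maximal intervals (root-to-leaf label sequences) of $\prec_\tau$ and checking each is a subsequence of a maximal interval of $\prec_{\tau'}$, hence a subsequence of $\sigma_1 \cdots \sigma_n$ by hypothesis.

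Once the one-step claim is established — a collapse either preserves or coarsens $\prec_\tau$, and coarsening preserves compatibility with a fixed total order by Definition of ``coarser'' — the lemma follows: for $\tau' \precT \tau$ with $\tau \in \T_\sigma$, chain through the intermediate collapses $\tau = \tau^{(0)} \angle \tau^{(1)} \angle \cdots \angle \tau^{(m)} = \tau'$, apply the one-step result at each stage to get $\prec_{\tau'}$ coarser than $\prec_\tau$ (coarseness is transitive), and conclude $\prec_{\tau'}$ is compatible with $<_\sigma$, i.e.\ $\tau' \in \T_\sigma$. (Note the statement as typeset has $\tau' \prec \tau$ meaning $\tau'$ is the \emph{more collapsed}, lower-degree tree, consistent with $\precT$ decreasing degree.)

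The main obstacle is purely bookkeeping: giving a clean, unambiguous description of the effect of a boundary angle collapse on the root-to-leaf paths, because one must track how the planar embedding and the leftmost/rightmost distinction interact with reattachment. The conceptual content — ``collapsing at the ends coarsens, collapsing in the middle does nothing'' — is already stated in the text just above the lemma, so the real work is only to phrase the subsequence check carefully enough that ``coarser'' is manifest; I would lean on the characterization that $\prec$ is coarser than $\prec'$ iff every $\prec$-maximal interval refines into a $\prec'$-maximal interval, which reduces everything to comparing two explicit lists of label-sequences.
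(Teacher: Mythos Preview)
Your proposal is correct and follows essentially the same approach as the paper: the paper's entire proof is the sentence immediately preceding the lemma (interior angle collapses leave $\prec_\tau$ unchanged, boundary collapses make it coarser), followed by a \qed. Your write-up just fleshes out that sentence with the case split and the transitivity argument; the only blemish is a harmless swap of the names $\tau$ and $\tau'$ in your opening line relative to the lemma's statement, which you correctly sort out by the end.
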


In our later proofs, we also need trees whose white vertices are labelled with an arbitrary subset $S$ of $\Np$ and the corresponding orders, the generalization is intuitively clear from the example in Figure \ref{Tphifig}.

To be precise, we give the technical version. If $V_w$ is the set of white vertices, then a labelling $lab:S\subset \Np$ is a bijection $\phi:S\stackrel{\leftrightarrow}{\to} V_w$.  We let $\T_S$ be the set of  $S$--labelled planted planer b/w bipartite trees  with a black root and white leaves. If $|V_w|=n$, let $\phi:[n]\to S$ be a linear order on $S$. 

\begin{definition}
Given $S$ and an order $\phi$ on it we define the set $\T_{\phi}$ to be the subset of $\T_S$ of trees $\tau$ whose partial order $\prec_\tau$ is compatible with the order $<_\phi$.
\end{definition}
This directly generalizes Definition \ref{Tsigmadef} and Lemma \ref{mainlem} holds accordingly.

\textit{Example.} If $\phi:[3]\rightarrow S=
\{2,5,7\}\subset \mathbb{N}^+$ maps $1\mapsto 5$, $2\mapsto 7$ and $3\mapsto 2$, then we can consider the the $S-$labelled trees $\tau$ such that $\prec_\tau$ is compatible with $<_\phi$. This is depicted in Figure \ref{Tphifig}.

\begin{figure}
		\centering
		\includegraphics[width=150mm]{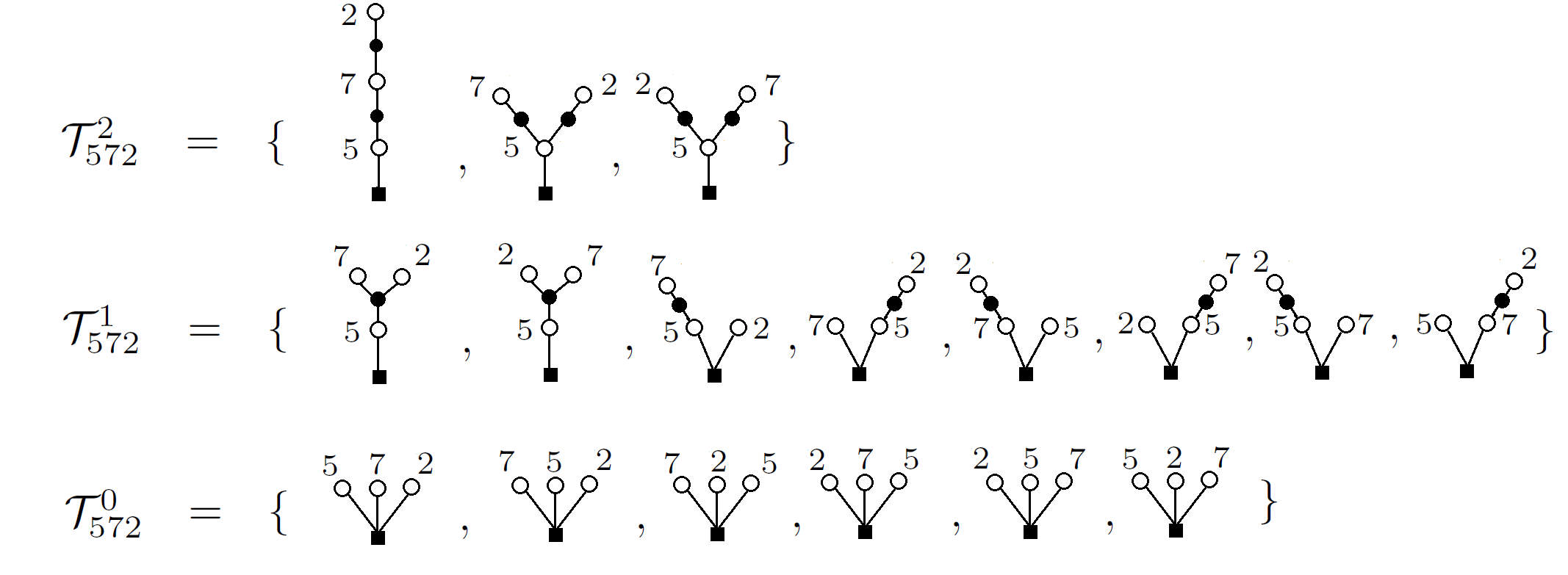}
		\caption{\label{Tphifig} The sets $\mathcal{T}^2_{572}$, $\mathcal{T}^1_{572}$, $\mathcal{T}^0_{572}$.}
\end{figure}

\subsubsection{Cutting and grafting trees: $B^{\pm}$ operators}
\label{Bpmpar}
There are two types of trees, those that have a unique lowest (i.e.\ closest to the root) white vertex, which we will call the white root. The set of these tree will be called the white rooted trees $\Twr$. The other type of tree has a several white vertices adjacent to the black root. These are,  by slight abuse of notation, called black rooted trees $\Tbr$. 
We will call {\it ordered collections} of such trees ``forests'' in $\T,\Tbr$ or $\Twr$. Here, we allow arbitrary labels on the white vertices.

\begin{definition}
The {\em initial branching number} of a tree $\t\in \Twr$ is the number of incoming edges of the unique white root.
\end{definition}

We will now define four operators:
\begin{enumerate}
\item $B^+_b$: ordered forests of $\T \to \T$.
This operation simply identifies all the black roots of the trees in the ordered forest into one black root. The linear order being the one coming from the trees and the order in the forrest. See Figure \ref{bplusfig} for an example.
\item $B^-_b$: $\T\to $ ordered forests in $\Twr$. This operations cuts all edges to the root vertex, takes the ordered collection of branches and puts one new black root on each branch. For an example, see Figure \ref{bminusfig}.
\item $B^{-}_w$: $\Twr\to$ ordered forests in $\T$. Cut off all the edges above the unique ${\it white}$ root vertex. Collect the branches in the order given by this white vertex, and add a black root to each of them. 
    
    NB: If one starts with an $\{\sigma\}$--labelled $\t\in \T_{\sigma}\cap \Twr$ and the white root is labeled by $\sigma_1$, then for some $\l_1,\dots,\l_k \in \mathrm{dSh}_{\sigma\backslash \sigma_1}[m_1,\dots, m_k]$: $B^-_w(\t)\in \T_{\l_1}\times \dots \times \T_{\l_k}$, where $k$ is the initial branching number of $\t$ and $m_i$ is the number of white vertices on the $i$-th branch.
\item $B^+_{s}$: $\T_{S_1}\times\dots\times \T_{S_k} \to T_{S}\cap \Twr$ whenever the $S_i$ are pairwise disjoint and none of them contain the singleton $\{s\}$. Here $S=\amalg_i S_i\amalg\{s\}$.
\begin{equation*}B^+_{s}(\tau_1,\tau_2,\cdots,\tau_k) =\{\text{$\tau$ obtained by grafting $\tau_1,\cdots,\tau_k$ to $scc(s)$}\} \subset \T_{n},
\end{equation*}
 where $scc(s)$
is only element in  $\T_{\{s\}}$. Here grafting means that each $\t_i$ is connected to the unique white vertex of $scc(s)$ by an additional edge in the order starting with $\t_1$. This is illustrated in Figure \ref{bpfig}.
We will use this operator when $(S_1,\dots, S_n)$ is a partition of the set $[n]\setminus \{s\}$.

\begin{figure}
		\centering
		\includegraphics[width=150mm]{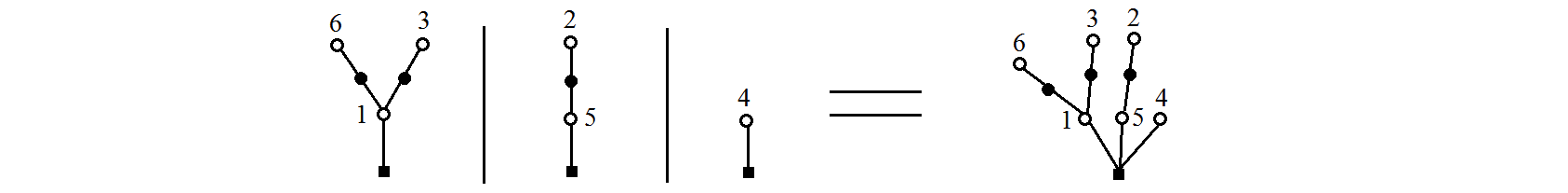}
\caption{\label{bplusfig}An example of $B_b^+$ and the bar notation}
\end{figure}
\begin{figure}
		\centering
		\includegraphics[width=150mm]{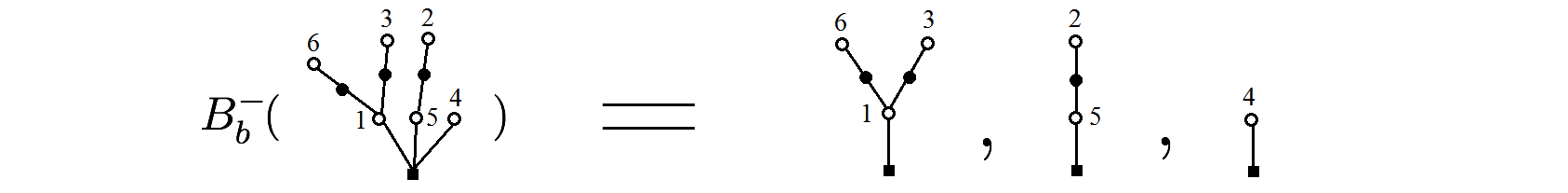}
\caption{\label{bminusfig} An example of $B^-_b$}
\end{figure}

\begin{figure}
		\centering
		\includegraphics[width=150mm]{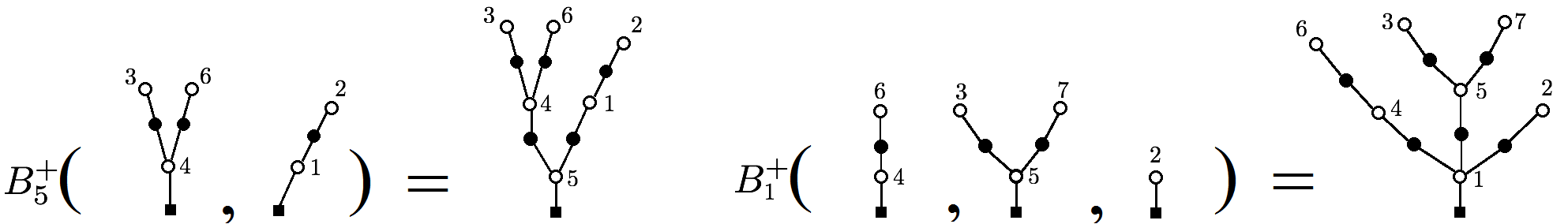}
		\caption{\label{bpfig} Two examples of $B^+_s$.}
\end{figure}

\end{enumerate}
\begin{notation}
To make contact with the permutahedra, especially the notation of \S\ref{permutasec}, 
we will use a vertical bar notation for the $B^+_b$ operator. That is, we will denote
$B^+_b(\t_1,\t_2,\dots,\t_k)$ by $\t_1|\t_2|\dots|\t_k$.  See Figure \ref{bplusfig}.
\end{notation}
\begin{remark}
\label{inversermk}
It is clear that $B^+_b$ and $B^-_b$ are inverses of each other.
Since a label is forgotten by $B^-_w$, $B_s^+$ is a left inverse for $B^-_w$ on the subset of $\Twr$, whose white roots are labelled by $s$.
Furthermore, $B^-_w$ is a left inverse for $B_s^+$ on the domain of definition of $B_s^+$.

Lastly if $s$ is not in the labelling set of $\t$: $B^+_sB^-_b$ switches the color of the root from black to white, labels it by $s$ and adds a new black root.
\end{remark}

\subsubsection{Decompositions and filtrations}

\label{decomppar}

$\T^{n-1}_n$ consists of the maximal elements in $\mathcal{T}_n$, i.e.\ exactly those elements that index the top-dimensional cells in $Cact^1(n)$. By \eqref{glueeq},  these cover $Cact^1$. These trees are all in $\Twr$, since
 otherwise, the tree would not have maximal degree.
 
To provide the setup for later inductive proofs, for each $\sigma\in S_n$, we will partition and then filter $\mathcal{T}_{\sigma}^{n-1}$ according to the initial branching number $k$. For trees in
$\T^{n-1}_{\sigma}$, $k$ can take values from $1$ to $n-1$. 
Let  $\mathcal{T}^{n-1}_{\sigma}(k)\subset\T^{n-1}_\sigma$ be the subset containing all the trees with initial branching number $k$. Then we have the following decomposition:
\begin{equation}
\mathcal{T}^{n-1}_{\sigma}=\coprod_{k=1}^{n-1}\mathcal{T}^{n-1}_{\sigma}(k).
\end{equation}
This decomposition gives rise to an ascending  filtration of
$\mathcal{T}^{n-1}_{\sigma}$:
\begin{equation}
\label{filtrationeq}
\mathcal{T}^{n-1}_{\sigma}(1)=\mathcal{T}^{n-1}_{\sigma,1}\subset \mathcal{T}^{n-1}_{\sigma,2}\subset\cdots \subset \mathcal{T}^{n-1}_{\sigma,n-2}\subset\mathcal{T}^{n-1}_{\sigma,n-1}=\mathcal{T}^{n-1}_{\sigma},
\end{equation}
where
\begin{equation}
\T^{n-1}_{\sigma,k}=\coprod_{q\leq k}\mathcal{T}^{n-1}_{\sigma}(q).
\end{equation}

We can further decompose each $\mathcal{T}^{n-1}_{\sigma}(k)$ using the $B^-_w$ or the $B_{\sigma_1}^{+}$ operator. The following observation is the key: since the $B_{\sigma_1}^+$ operator lands in $\Twr$, it is in general not surjective, but
it is surjective on the top degree trees.

\begin{definition}

Fix $\sigma\in S_n$ , $k\in \Np$ with $1\leq k\leq n-1$, and $m_1,m_2,\cdots, m_k$ be $k$ positive integers such that $m_1+\cdots +m_k=n-1$.
 Let $\mathbf{l}=\mathbf{l}_1,\mathbf{l}_2,\cdots,\mathbf{l}_k\in \mathrm{dSh_{\sigma \backslash \sigma_1}}[m_1,m_2,\cdots,m_k]$.

  We define $\mathcal{T}^{n-1}_{\sigma}[\mathbf{l}]$
to be the set of all trees $B^+_{\sigma_1}(\tau_1,\tau_2,\cdots,\tau_k)$ in $\mathcal{T}_{\sigma}$ obtained by grafting $\tau_1,\cdots,\tau_k$, with $\tau_i\in \T_{\mathbf{l}_i}^{m_i-1}$ to $scc(\sigma_1)$. Since, the order of the branches is recorded, it follows that indeed the image under
$B^+_{\sigma_1}$ is in $\mathcal{T}^{n-1}_{\sigma}$  
and furthermore $\t\in \T_{\sigma}^{n-1}[\l]$ if and only if
$B^-_w(\tau)\in \T_{\mathbf{l}_1}^{m_1-1}\times\dots\times  \T_{\mathbf{l}_k}^{m_k-1}$.

To extend this decomposition to all degrees,
we now define $\mathcal{T}_{\sigma}[\mathbf{l}]$ be the subset of $\mathcal{T}_{\sigma}$ such that each element in $\mathcal{T}_{\sigma}[\mathbf{l}]$ is less than or equal to an element in $\mathcal{T}^{n-1}_{\sigma}[\mathbf{l}]$. Similarly, we define the pieces of the filtration $\mathcal{T}_{\sigma,k}$.

 Since angle collapse only potentially decreases the initial branching number, we also have the inherited poset structures on $\mathcal{T}_{\sigma}[\mathbf{l}]$ and $\mathcal{T}_{\sigma,k}$.

\end{definition}

\textit{Example.} The elements of $\mathcal{T}^5_{532146}[\mathbf{l}_1,\mathbf{l}_2]$ where $\mathbf{l}_1=36$, $\mathbf{l}_2=214$ are shown in Figure \ref{aritytwofig}.
\begin{figure}
		\centering
		\includegraphics[width=150mm]{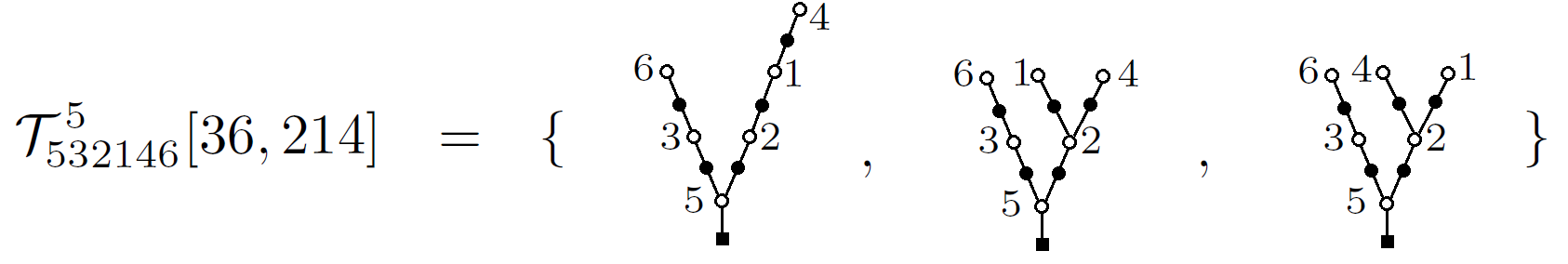}
		\caption{\label{aritytwofig} The elements of $\mathcal{T}^5_{532146}[\mathbf{l}]$ where $\mathbf{l}=\mathbf{l}_1,\mathbf{l}_2$ and $\mathbf{l}_1=36$, $\mathbf{l}_2=214$.}
\end{figure}

Summing up, we have the decomposition

\begin{equation}
\label{kdecompeq}
\mathcal{T}^{n-1}_{\sigma}(k)=\coprod_{m_1,\cdots,m_k}\coprod_{\mathbf{l}\in \mathrm{dSh}_{\sigma\backslash \sigma_1}[m_1,\cdots,m_k]}\mathcal{T}_{\sigma}^{n-1}[\mathbf{l}]
\end{equation}
and
\begin{equation}
\label{filtrationeq2}
\mathcal{T}_{\sigma,1}\subset \mathcal{T}_{\sigma,2} \subset\dots \subset \mathcal{T}_{\sigma,n-2}\subset \mathcal{T}_{\sigma,n-1}.
\end{equation}

The realization functor $C$ on $\mathcal{T}_n$ restricts to $(\mathcal{T}_{\sigma},\precT)$, $(\mathcal{T}_{\sigma}[\mathbf{l}],\precT)$ and $(\mathcal{T}_{\sigma,k},\precT)$, respectively.
\subsection{Permutohedral covering of  $Cact^1(n)$.}
We can now prove that indeed $Cact^1(n)$ is covered by $n!$ permutahedra $P_{\sigma}:=P_n$, $\sigma\in S_n$ in $Cact^1(n)$ as shown below.

\begin{theorem}
\label{CactP_n}
For any $\sigma\in S_n$, $\mathrm{colim}_{\mathcal{T}_{\sigma}}C$ is a polytope, which is piecewise linearly homeomorphic ($\cong$) to $P_n$.
\end{theorem}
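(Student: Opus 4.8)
The plan is to induct on $n$, using the filtration \eqref{filtrationeq2} and the $B^{\pm}$ operators to build the polytope $\mathrm{colim}_{\T_\sigma}C$ one shell at a time, matching it against the standard shelling of $P_n$ by its facets. For $n=1$ both sides are a point. For the inductive step, fix $\sigma$ and write $s=\sigma_1$. The key structural fact is that the top-dimensional cells of $\mathrm{colim}_{\T_\sigma}C$ are indexed by $\T^{n-1}_\sigma$, which by \eqref{kdecompeq} decomposes according to the initial branching number $k$ and the unshuffle $\l\in \dSh_{\sigma\backslash\sigma_1}[m_1,\dots,m_k]$. Via $B^-_w$ (with $B^+_{\sigma_1}$ as its inverse on this locus, by Remark \ref{inversermk}), the sub-poset $\T_\sigma[\l]$ is isomorphic to $\T_{\l_1}\times\cdots\times\T_{\l_k}$ with the product poset structure; hence, using the inductive hypothesis applied to each factor $\T_{\l_i}$ (which gives $P_{m_i}$), the realization $\mathrm{colim}_{\T_\sigma[\l]}C$ is PL-homeomorphic to $P_{m_1}\times\cdots\times P_{m_k}$. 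This is exactly the combinatorial type of a codimension-$(k-1)$ face of $P_n$, by property (5) of the permutohedron, and in particular for $k=1$ it is a facet $P_{n-1}$.

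Next I would assemble these pieces. First handle the bottom of the filtration: $\T_{\sigma,1}=\T^{n-1}_\sigma(1)$ downward closure consists of trees with initial branching number $1$, i.e.\ trees of the form $B^+_{\sigma_1}(\tau_1)$ with a single branch; by the previous paragraph $\mathrm{colim}_{\T_{\sigma,1}}C\cong P_{n-1}$, sitting inside $\mathrm{colim}_{\T_\sigma}C$ as a single facet. Then I would argue inductively up the filtration \eqref{filtrationeq2} that $\mathrm{colim}_{\T_{\sigma,k}}C$ is PL-homeomorphic to a union of $k$ facets of $P_n$ forming a PL ball (a partial shelling), the transition from $\T_{\sigma,k}$ to $\T_{\sigma,k+1}$ being the attachment of the cells of $\T^{n-1}_\sigma(k+1)$ along the cells they share with $\T_{\sigma,k}$. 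The crucial point here is that an angle collapse can only decrease the initial branching number (noted just before \eqref{filtrationeq2}), so a tree in $\T^{n-1}_\sigma(k+1)$ meets $\T_{\sigma,k}$ precisely along its faces of branching number $\le k$; identifying these intersection loci with sub-complexes of $\partial(P_{m_1}\times\cdots\times P_{m_{k+1}})$ via the $B$-operators shows the attachment is along a PL ball in the boundary, so each stage remains a PL ball and the gluing is PL. At the top, $k=n-1$ forces each $m_i=1$, and the single tree $\t_\sigma$ with $\prec_{\t_\sigma}={<_\sigma}$ is the unique vertex-type top cell; all $n!$ vertices of $P_n$ (the orderings refining the tree, which here are total) appear as the $0$-cells, matching property (2).

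The main obstacle I expect is verifying that the attachments are PL-homeomorphisms onto the corresponding facets of $P_n$ rather than merely combinatorial equivalences — that is, producing honest affine/PL maps on each $C(\tau)=\Delta^{w_1}\times\cdots\times\Delta^{w_n}$ that are compatible with the degeneracy maps $s_j$ implementing $\precT$ in the functor $C$, and that glue to the face inclusions defining $\mathcal{F}(\mathcal{J}_\sigma)=P_n$ in the realization functor $\mathcal{F}$ of \S\ref{geometricpar}. Concretely one needs a bijection $\T_\sigma\cong \mathcal{J}_\sigma$ of graded posets — sending a tree to the unshuffle recording the branch-label-sequences read off along the shortest root-to-leaf paths, which is well defined precisely because $\prec_\tau$ is compatible with $<_\sigma$ (the maximal-interval description given after Definition \ref{Tsigmadef}) — and then checking that $C$ and $\mathcal{F}$ agree under this bijection up to PL homeomorphism, with the product-of-simplices coordinates on $C(\tau)$ related to the barycentric/convex-hull coordinates on $\mathcal{F}(\mathbf{a})$ by a PL map. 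Granting that dictionary, the colimit of $C$ over $\T_\sigma$ and the colimit of $\mathcal{F}$ over $\mathcal{J}_\sigma$ coincide up to PL homeomorphism, and the latter is $P_n$ by the Proposition in \S\ref{geometricpar}; the filtration argument above is then really just the verification that this poset isomorphism is degree-preserving and compatible with the face structure. I would relegate the explicit coordinate comparison to the appendix.
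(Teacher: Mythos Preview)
There is a genuine dimension error that propagates through the whole argument. You claim that via $B^-_w$ the poset $\T_\sigma[\l]$ is isomorphic to $\T_{\l_1}\times\cdots\times\T_{\l_k}$ and hence $\mathrm{colim}_{\T_\sigma[\l]}C\cong P_{m_1}\times\cdots\times P_{m_k}$. But the functor $C$ assigns to each tree the product $\prod_i\Delta^{|v_i|-1}$, and the white root $\sigma_1$ has $k$ incoming edges, contributing a $\Delta^k$ factor that $B^-_w$ forgets. The correct identification is \eqref{celleq}: $\mathrm{colim}_{\T_\sigma[\l]}C\cong P_{m_1}\times\cdots\times P_{m_k}\times\Delta^k$, which has dimension $n-1$, not $n-1-k$. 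In particular $\mathrm{colim}_{\T_{\sigma,1}}C\cong P_{n-1}\times\Delta^1$ is the central prism in Figure~\ref{gluepnfig}, not a single facet; and each subsequent filtration step $\mathrm{colim}_{\T_{\sigma,k}}C$ is a full-dimensional PL ball, not a union of $k$ facets. The paper's proof uses exactly this: one glues the $(n-1)$-dimensional pieces $P_{m_1}\times\cdots\times P_{m_k}\times\Delta^k$ along the $(n-2)$-dimensional locus $P_{m_1}\times\cdots\times P_{m_k}\times\bigcup_{i=2}^k\partial_i\Delta^k$ (the inner faces of the root simplex), which is where the branching number drops.

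The second gap is the claimed graded-poset bijection $\T_\sigma\cong\mathcal{J}_\sigma$. This is false: already $|\T^{n-1}_\sigma|>1$ for $n\ge3$ (e.g.\ $|\T^3_{1234}|=15$) while $|\mathcal{J}^{n-1}_\sigma|=1$. The map you describe is many-to-one; the trees index a \emph{refinement} of the face lattice of $P_n$, not an isomorphic copy of it. The paper handles this by first showing the colimit is a PL $(n-1)$-cell via the shell filtration, and only then identifying the boundary cells of the cactus decomposition, grouped by the $B^+_b$ operator as $\mathcal{T}_{\l_1}|\cdots|\mathcal{T}_{\l_k}$, with the facets $P_{m_1}\times\cdots\times P_{m_k}$ of $P_n$; the vertex correspondence $scc(\nu)\mapsto\mathbf{v}_\nu$ then pins down the PL homeomorphism.
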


\begin{proof}
We proceed by nested induction. When $n=1,2$, $\mathrm{colim}_{\mathcal{T}_{\sigma}}C$ are a point and a closed line segment, respectively. So the statement is true in these two cases.

Suppose the statement is true for all $m$ and all $\sigma\in S_m$ where $m<n$.
Let $\sigma \in S_n$. We will first show that $\mathrm{colim}_{\mathcal{T}_{\sigma}}C$ is a PL (piecewise linear) cell of dimension $n-1$. We will simply say that $\mathrm{colim}_{\mathcal{T}_{\sigma}}C$ is a PL $D^{n-1}$.

 We will iteratively use the following observation. The connected sum of two PL $D^{n-1}$'s along a sub PL $D^{n-2}$ is a PL $D^{n-1}$.
More precisely: if $X$ and $Y$ are both PL $D^{n-1}$'s and $i:D^{n-2}\hookrightarrow X$ and $j:D^{n-2}\hookrightarrow Y$ are injective PL maps such that $i(D^{n-2})$ is the connected union of some facets of $X$ and $j(D^{n-2})$ is the connected union of some facets of $Y$ (so both $i(D^{n-2})$ and $j(D^{n-2})$ are PL $D^{n-2}$), then the glued object (pushout of $X\hookleftarrow D^{n-2} \hookrightarrow Y$) is again a PL $D^{n-1}$.

Also, notice that by the induction hypothesis and the definition of the realization functor $C$, for $\mathbf{l}\in \mathrm{dSh}_{\sigma\backslash \sigma_1}[m_1,\cdots,m_k]$,

\begin{equation}
\label{celleq}
\mathrm{colim}_{\mathcal{T}_{\sigma}[\mathbf{l}]}C \cong P_{m_1}\times P_{m_2}\times \cdots \times P_{m_k}\times \Delta^k
\end{equation}
So $\mathrm{colim}_{\mathcal{T}_{\sigma}[\mathbf{l}]}C$ is a PL $D^{n-1}$.

We now use a second induction on $k$, to show that
\begin{equation}
\label{claim1}
\mathrm{colim}_{\mathcal{T}_{\sigma,k}}C\mbox{ is a PL } D^{n-1}.
\end{equation}

When $k=1$ we know that
$\T^{n-1}_{\sigma,1}=\T^{n-1}_{\sigma}(1)=\T^{n-1}_{\sigma}[\mathbf{l}]$,
with $\mathbf{l}=\sigma_2\cdots\sigma_n$, (see \S\ref{decomppar}),
and hence $\mathrm{colim}_{\mathcal{T}_{\sigma,1}}C=\mathrm{colim}_{\mathcal{T}_{\sigma}[\mathbf{l}]}C\cong P_{n-2}\times \Delta^1$ is a PL $D^{n-1}$.

Now suppose for $2\leq k\leq n-1$, $\mathrm{colim}_{\mathcal{T}_{\sigma,k-1}}C$ is a PL $D^{n-1}$.

For each $\mathbf{l}\in \mathrm{dSh}_{\sigma\backslash \sigma_1}[m_1,\cdots,m_k]$, $\mathrm{colim}_{\mathcal{T}_{\sigma}[\mathbf{l}]}C$, which is a PL $D^{n-1}$, is glued to the PL $D^{n-1}$ given by  $\mathrm{colim}_{\mathcal{T}_{\sigma,k-1}}C$ along $P_{m_1}\times \cdots P_{m_k}\times \bigcup_{i=2}^k\del_i\Delta^k$.
Here $\del_i$ is the $i$--th face map which on the simplex in the vertex notation $\Delta^k=v_1\cdots v_{k+1}$ can be written as $v_1\cdots \widehat{v_i}\cdots v_{k+1}$. In the cactus picture, this corresponds to the contraction of the $i$-th arc on the root lobe. Notice that since $\partial \Delta^k=\bigcup_{i=1}^{k+1}v_1\cdots \widehat{v_i}\cdots v_{k+1}$ is a PL
$S^{k-1}$, $(v_2v_3\cdots v_{k+1})\bigcup(v_1v_2\cdots v_k)$ is a PL $D^{k-1}$ and $\bigcup_{i=2}^kv_1\cdots \widehat{v_i}\cdots v_{k+1}$ is also a PL $D^{k-1}$. Thus, $P_{m_1}\times \cdots P_{m_k}\times (\bigcup_{i=2}^kv_1\cdots \widehat{v_i}\cdots v_{k+1})$ is a PL $D^{n-2}$. 
And hence we are gluing two PL $D^{n-1}$'s along a common PL $D^{n-2}$ and the result is
a PL $D^{n-1}$. This is true for each $\mathbf{l}\in S_{\sigma}[m_1,\cdots,m_k]$ in \eqref{kdecompeq} individually, so we can glue in these $\mathrm{colim}_{\mathcal{T}_{\sigma}[\mathbf{l}]}C$ one by one and end up with a PL $D^{n-1}$ and obtain \eqref{claim1}.

From this it follows that: $\mathrm{colim}_{\mathcal{T}_{\sigma}}C=\mathrm{colim}_{\mathcal{T}_{\sigma,n-1}}C$ is a PL $D^{n-1}$,  by applying $C$ to the filtration \eqref{filtrationeq2}.
Indeed, we have the following filtration of the PL cell $\mathrm{colim}_{\mathcal{T}_{\sigma,n-1}}C$ by PL cells: $$\mathrm{colim}_{\mathcal{T}_{\sigma,1}}C\subset \mathrm{colim}_{\mathcal{T}_{\sigma,2}}C\subset \cdots \subset \mathrm{colim}_{\mathcal{T}_{\sigma,n-2}}C\subset \mathrm{colim}_{\mathcal{T}_{\sigma,n-1}}C.$$
An example is illustrated in Figure \ref{gluepnfig}.

\begin{figure}
		\centering
		\includegraphics[width=150mm]{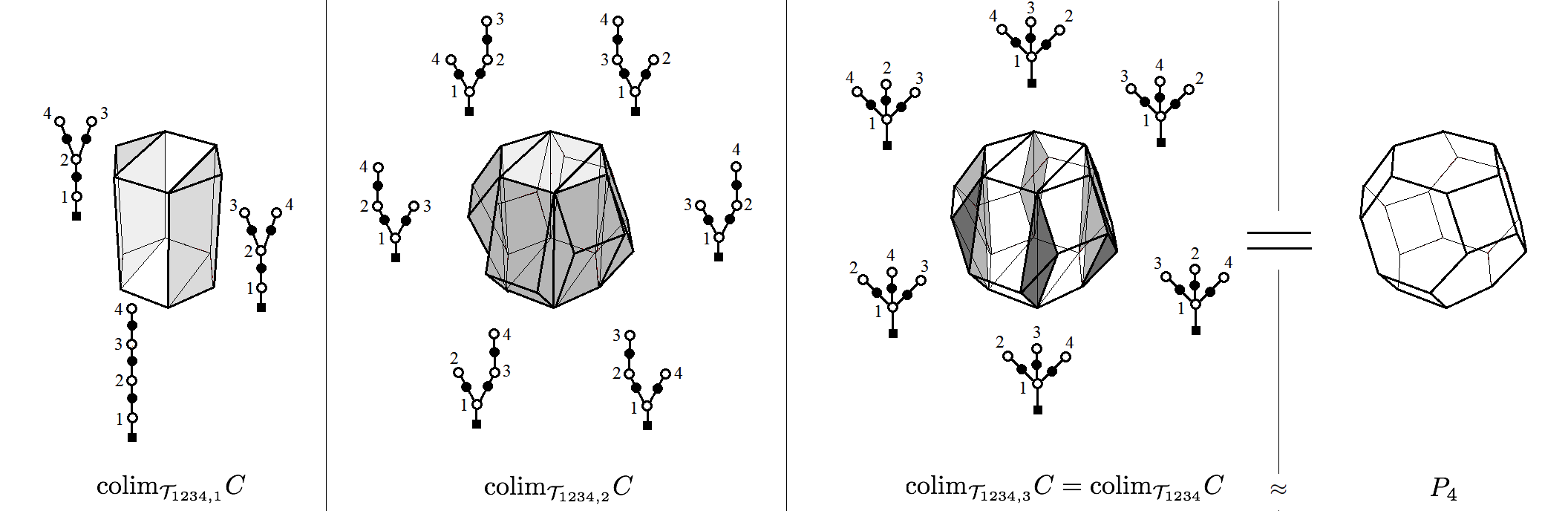}
		\caption{\label{gluepnfig}$\mathrm{colim}_{\mathcal{T}_{1234,i}}C$, $i=1,2,3$.}
\end{figure}

Next, we show that the PL cell $\mathrm{colim}_{\mathcal{T}_{\sigma}}C$ is indeed piecewise linearly isomorphic to $P_n$. Let us define a new functor. For any $\sigma\in S_n$, let $\mathcal{C}$ be the realization functor from $\mathcal{T}_{\sigma}$ to the category of PL topological spaces defined by $$\mathcal{C}_{scc(\sigma)}=\mathbf{v}_{\sigma}\in \mathbb{R}^n$$ on degree $0$ elements and $\mathcal{C}_{\tau}$ to be the convex hull of $\{\mathcal{C}_{\tau'}|\tau'\in \mathcal{T}^0_{\sigma},\tau'\precT \tau\}$ for general $\tau\in \mathcal{T}^{i}_{\sigma}$ where $i>0$. Again,  the image of $\precT$ under $\mathcal{C}$ are defined to be be face inclusions.

There is hence is a piecewise linear homeomorphism from $\mathrm{colim}_{\mathcal{T}_{\mathbf{l}_i}}C$ to $\mathrm{colim}_{\mathcal{T}_{\mathbf{l}_i}}\mathcal{C}$ by extending the vertex correspondences $C_{scc(\sigma)}\mapsto \mathcal{C}_{scc(\sigma)}$. It remains to identify the face structure.

Each cell on the boundary of $\mathrm{colim}_{\mathcal{T}_{\sigma}}C$ is indexed by a tree obtained as $B_b^+(\tau_1,\tau_2,\cdots,\tau_k)$, where $\tau_i \in \mathcal{T}^{m_i-1}_{\mathbf{l}_i}$ such that $\mathbf{l}_1,\mathbf{l}_2,\cdots,\mathbf{l}_k\in \mathrm{dSh}_{\sigma}[m_1,m_2,\cdots,m_k]$. As mentioned previously, we denote such a tree by  $\tau_1|\cdots|\tau_k$.

Let $\mathcal{T}_{\mathbf{l}_1}|\mathcal{T}_{\mathbf{l}_2}|\cdots|\mathcal{T}_{\mathbf{l}_k}=
B^+_b(\mathcal{T}_{\mathbf{l}_1}\times \dots\times \mathcal{T}_{\mathbf{l}_k})
=\{\tau_1|\tau_2|\cdots|\tau_k : \tau_i\in \mathcal{T}_{\mathbf{l}_i}\}$. We shall consolidate the cells indexed by all $\tau_1|\tau_2|\cdots|\tau_k\in \mathcal{T}^{m_1-1}_{\mathbf{l}_1}|\mathcal{T}^{m_2-1}_{\mathbf{l}_2}|\cdots | \mathcal{T}^{m_k-1}_{\mathbf{l}_k}$ together to form the faces.
We can then again use induction on $n$ as previously. Namely, by the induction hypothesis and the way that $B_b^+$ is defined, we know for each $\mathbf{l}_1|\mathbf{l}_2|\cdots|\mathbf{l}_k$, $\mathrm{colim}_{\mathcal{T}_{\mathbf{l}_1}| \mathcal{T}_{\mathbf{l}_2}|\cdots |\mathcal{T}_{\mathbf{l}_k}}\mathcal{C}=P_{m_1}\times P_{m_2} \times \cdots \times P_{m_k}$. But this is the characterization of the cells of $P_n$.
Therefore, $\mathrm{colim}_{\mathcal{T}_{\sigma}}\mathcal{C}=P_n$ and thus $\displaystyle \mathrm{colim}_{\mathcal{T}_{\sigma}}C \cong P_n$.

 Notice, that the colimits, can be taken before realization, and all the combinatorics can also be taken on the level of polytopes.
This gives the strengthening of the statement.
\end{proof}

\begin{remark}
Let $\sigma\in S_n$, since $\mathrm{colim}_{\mathcal{T}_{\sigma}}C\cong P_n$, we say that $P_n$ has the decomposition into cactus cells (products of simplices) associated to $\sigma$.

 For $n\geq 2$, we have $n!/2$ different decompositions of $P_n$. The number is $n!/2$ instead of $n!$ because $\sigma$ and $\sigma\circ s$ give the same decomposition, where $s:[n]\rightarrow [n]$ is defined by $s\big|_{[n-2]}=\mathrm{id}_{[n-2]}$, $s_{n-1}=n$ and $s_n=n-1$.
\end{remark}

\subsection{Further consequences. Recursion and Dyer--Lashof operations}

\subsubsection{Operadic generation and Dyer--Lashof operations}

The indexing set $\mathcal{T}^{n-1}_{12\cdots n}$ of the top-dimensional cells of the decomposition of $P_n$ associated to $12\cdots n$ can be generated from the single tree $B^+_{1}(scc(2))$ with two white vertices using the operadic composition $\circ_1$  for the cellular chain operad $CC_*(Cact^1)$. This observation allows us to link permutahedra to Dyer--Lashof operations.

Let $c=\sum_{i\in I}n_ic_i$, where $n_i\in \mathbb{Z}$, be a chain in $CC_{n-1}(Cact^1(n))$ for some $n$. Let $\{c\}$ be the set of support of $c$, i.e., $\{c\}=\{c_i\big| n_i\neq 0\}$.  Let $\tau=B^+_1(scc(2))$. Define $\{\tau\circ_1\mathcal{T}^{i-1}_{12\cdots i}\}$ to be the (disjoint) union of the sets $\{\tau\circ_1 \tau'\}$ where $\tau'\in \mathcal{T}^{i-1}_{12\cdots i}$. It can be readily checked that the following holds.

\begin{lemma}
$\{\tau\circ_1\mathcal{T}^{i-1}_{12\cdots i}\}=\mathcal{T}^i_{12\cdots (i+1)}$.\qed
\end{lemma}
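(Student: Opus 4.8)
The plan is to prove the set equality $\{\tau\circ_1\mathcal{T}^{i-1}_{12\cdots i}\}=\mathcal{T}^i_{12\cdots(i+1)}$ by unwinding what the operadic composition $\circ_1$ does to the indexing trees and matching it against the description of the top-dimensional cells in the decomposition of $P_{i+1}$ associated to $12\cdots(i+1)$. First I would recall that $\tau = B^+_1(scc(2))$ is the tree on two white vertices, labelled $1$ and $2$, in which the white vertex $1$ is the white root with the single branch carrying $scc(2)$ grafted on; its cell is the top cell of $P_2$, i.e.\ $C(\tau)\cong \Delta^1$. The composition $\circ_1$ on $CC_*(Cact^1)$ is induced by the operadic insertion $\circ_1: Cact(2)\times Cact(i)\to Cact(i+1)$ described earlier, which on trees grafts the tree $\tau'$ indexing a cell of $Cact^1(i)$ into the white vertex labelled $1$ of $\tau$ and relabels: the white vertices of $\tau'$ (originally labelled $1,\dots,i$) get shifted to $2,\dots,i+1$, while the white vertex originally labelled $2$ in $\tau$ stays labelled... — here one must be careful with the standard relabelling convention; the net effect on $\tau'\in\mathcal{T}^{i-1}_{12\cdots i}$ is to produce exactly the tree $B^+_1(\text{shifted }\tau')$ with an extra branch, and I would spell this out so that the image is visibly a white-rooted tree with white root labelled $1$.

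The key step is then a degree count and a compatibility check. Since $\circ_i$ on cellular chains raises dimension additively, $\tau\circ_1\tau'$ has degree $(2-1)+(i-1)=i$, so the image lies in $\mathcal{T}^i_{?}$ on $i+1$ white vertices; this is the right target degree. For membership in $\mathcal{T}_{12\cdots(i+1)}$ I would invoke Definition \ref{Tsigmadef}: I must check that $\prec_{\tau\circ_1\tau'}$ is compatible with $<_{12\cdots(i+1)}$, i.e.\ that along every root-to-leaf path the labels appear in increasing order. By hypothesis $\tau'\in\mathcal{T}_{12\cdots i}$, so its paths are increasing in $1,\dots,i$; after the shift by $+1$ (or whatever the precise relabelling is) and grafting under the white root labelled $1$, every path of $\tau\circ_1\tau'$ is $1$ followed by an increasing subsequence of $2,\dots,i+1$, hence increasing — so $\tau\circ_1\tau'\in\mathcal{T}^i_{12\cdots(i+1)}$. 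This gives the inclusion $\subseteq$.

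For the reverse inclusion $\supseteq$, I would take an arbitrary $\rho\in\mathcal{T}^i_{12\cdots(i+1)}$. Such $\rho$ is a top-degree tree, hence (as noted in \S\ref{decomppar}) lies in $\Twr$, and since it is compatible with $<_{12\cdots(i+1)}$ its white root must be the minimal label, namely $1$. Then I apply $B^-_w$: by Remark \ref{inversermk} and the ``NB'' in the definition of $B^-_w$, $B^-_w(\rho)\in \mathcal{T}_{\l_1}\times\cdots\times\mathcal{T}_{\l_k}$ for some unshuffle $\l_1|\cdots|\l_k$ of $23\cdots(i+1)$, and since $\rho$ has top degree each factor has top degree in its arity. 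Because every $\l_j$ is an increasing subsequence of $23\cdots(i+1)$, re-indexing back by $-1$ realizes the forest as coming from $\mathcal{T}^{i-1}_{12\cdots i}$ after a single $B^+_b$/merge — concretely, I would argue that $\rho = \tau\circ_1\tau'$ where $\tau'$ is recovered from $B^-_w(\rho)$ by the inverse relabelling followed by $B^+_1$, using that $B^+_b$ and $B^-_b$ are inverse and $B^+_1$ is a left inverse of $B^-_w$ on white-root-labelled-$1$ trees (Remark \ref{inversermk}). Verifying $\tau'\in\mathcal{T}^{i-1}_{12\cdots i}$ is again just the path-increasing check.

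The main obstacle I expect is bookkeeping the relabelling conventions in the operadic insertion $\circ_1$ and making sure the ``white root labelled $1$, remaining branches increasing'' normal form is matched precisely with the $B^\pm_w$/$B^\pm_b$ formalism — in particular that the degree-$i$ (top) condition forces $\rho$ to be white-rooted with root label $1$, and that no tree in $\mathcal{T}^i_{12\cdots(i+1)}$ is missed when $\tau'$ ranges over $\mathcal{T}^{i-1}_{12\cdots i}$. Once the normal-form description of $\mathcal{T}^{i-1}_{12\cdots i}$ is in hand (every such tree is $B^+_1$ of a forest of top-degree trees labelled by an increasing unshuffle of $23\cdots i$, after shift), the grafting description of $\circ_1$ makes both inclusions essentially immediate, so the lemma reduces to this combinatorial identification, which is why the excerpt says it ``can be readily checked.''
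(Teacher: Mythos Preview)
Your proposal has a genuine gap rooted in a misreading of what $\tau\circ_1\tau'$ is on the cellular level. First, the relabelling goes the other way: under the standard convention the labels $1,\dots,i$ of $\tau'$ stay fixed and it is the surviving input $2$ of $\tau$ that becomes $i{+}1$. More seriously, $\tau\circ_1\tau'$ is not a single tree but a \emph{chain}. The cell $C(\tau)\cong\Delta^1$ parametrizes where lobe $2$ (now $i{+}1$) sits on the outside circle of the cactus replacing lobe $1$; as this point runs through the $2i-1$ arcs of $\tau'$ one obtains a sum of $2i-1$ top cells. Thus $\{\tau\circ_1\tau'\}$ is the set of trees obtained from $\tau'$ by grafting a single new white leaf labelled $i{+}1$ at any angle of any white vertex. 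Your description ``$B^+_1(\text{shifted }\tau')$ with an extra branch'' does not match this, and your reverse construction via $B^-_w$ then collapses: by Remark~\ref{inversermk}, applying $B^+_1\circ B^-_w$ to a $\rho\in\Twr$ with white root labelled $1$ returns $\rho$ itself, so you never produce a tree on $i$ white vertices to serve as $\tau'$.

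The argument the paper has in mind is shorter and does not use the $B^\pm$ operators. For the inclusion $\supseteq$, take $\rho\in\mathcal{T}^i_{12\cdots(i+1)}$; since $i{+}1$ is the maximum under $<_{12\cdots(i+1)}$ it must label a leaf, and since $\rho$ is top degree the black vertex below it has valence $2$. Deleting this leaf together with its two incident edges and the intermediate black vertex drops the degree by exactly one and yields $\tau'\in\mathcal{T}^{i-1}_{12\cdots i}$; then $\rho$ is one of the summands of $\tau\circ_1\tau'$ by construction. For $\subseteq$, grafting a leaf labelled $i{+}1$ anywhere on $\tau'\in\mathcal{T}^{i-1}_{12\cdots i}$ raises the degree by one and keeps every root-to-leaf label sequence increasing, since $i{+}1$ exceeds every existing label. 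Both inclusions are then immediate, which is why the paper leaves the lemma without further comment.
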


\begin{theorem}
Let $\tau^{n-1}=\underbrace{\{\tau\circ_1\{\cdots\{\tau\circ_1\{\tau\circ_1\{\tau\}\}\}\cdots\}\}}_{\text{There are $n-1$ $\tau$ and thus $n-2$ $\circ_1$.}}$. Then $\tau^{n-1}=\mathcal{T}^{n-1}_{12\cdots n}$ and moreover, the multiplicity of each summand in 
$\tau\circ_1(\cdots(\tau\circ_1(\tau\circ_1(\tau)))\cdots))$
is $1$. So $\tau^{n-1}$ indexes the top-dimensional cells of the decomposition of $P_n$ associated to $12\cdots n$. This is the cell for the Dyer--Lashof operation.
\end{theorem}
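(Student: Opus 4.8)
The plan is to iterate the previous lemma $\{\tau\circ_1\mathcal{T}^{i-1}_{12\cdots i}\}=\mathcal{T}^i_{12\cdots(i+1)}$ and control the multiplicities along the way. First I would set up the induction on $n$: the base case $n=1$ is trivial since $\tau^1=\{\tau\}=\mathcal{T}^1_{12}$ (one tree, multiplicity $1$), and $n=2$ gives $\tau^2=\{\tau\circ_1\{\tau\}\}$, which by the lemma equals $\mathcal{T}^2_{123}$. For the inductive step, assume $\tau^{n-2}=\mathcal{T}^{n-2}_{12\cdots(n-1)}$ with all multiplicities equal to $1$ in the iterated chain-level expression. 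Applying $\tau\circ_1(-)$ to this chain and using the lemma on the level of support sets gives $\{\tau^{n-1}\}=\{\tau\circ_1\mathcal{T}^{n-2}_{12\cdots(n-1)}\}=\mathcal{T}^{n-1}_{12\cdots n}$, which establishes the set-theoretic claim. The final sentence then follows immediately from Theorem~\ref{CactP_n}: $\mathcal{T}^{n-1}_{12\cdots n}=\mathcal{T}^{n-1}_{\sigma}$ for $\sigma=12\cdots n$ indexes exactly the top-dimensional cactus cells in the decomposition $\mathrm{colim}_{\mathcal{T}_{\sigma}}C\cong P_n$, and this is by definition the cell carrying the Dyer--Lashof operation (the top class of the $E_2$-structure in arity $n$).

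The real content is the multiplicity-one assertion, and I expect this to be the main obstacle. Two distinct summands $\tau'_1\neq\tau'_2$ in $\mathcal{T}^{n-2}_{12\cdots(n-1)}$ could in principle produce overlapping trees under $\tau\circ_1(-)$, or a single $\tau'$ could appear with multiplicity $>1$ if the operadic composition $\circ_1$ on $CC_*(Cact^1)$ is itself not ``simple''. To rule both out I would argue that $B^+_1$ (equivalently the composition $\tau\circ_1(-)$, which grafts the argument onto the lone incoming edge of $scc(2)$ and then re-expands) is injective on $\mathcal{T}^{n-2}_{12\cdots(n-1)}$: from the output tree one recovers the input by applying $B^-_w$ and forgetting the new white root, per Remark~\ref{inversermk}. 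Injectivity on supports gives distinctness of the summands; combined with the inductive hypothesis that each $\tau'$ already occurred once, and with the observation that $\circ_1$ composed with a $0$-arity-shift corolla insertion does not split a single cell into several (each angle-collapse stratum of $\tau'$ maps to a single stratum of the output), one concludes each element of $\mathcal{T}^{n-1}_{12\cdots n}$ is hit exactly once.

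A point requiring care: the chain $\tau\circ_1(\cdots)$ lives in $CC_*(Cact^1(n))$ where $\circ_1$ is the differential-graded operad composition, so ``multiplicity'' means the integer coefficient in this chain, and I must check no signs or cancellations occur — i.e.\ all coefficients stay $+1$ rather than merely being nonzero. This follows because all trees involved are top-degree (degree $n-1$ in arity $n$), the composition of top cells of permutahedral pieces is again a top cell with the standard orientation, and the decomposition \eqref{kdecompeq} shows these top cells are indexed disjointly by unshuffles; hence the expansion is literally a sum over a set with no repetitions and no orientation ambiguity. Finally I would note that this recovers, on the nose, the cellular chain $CC_*(P_n)\subset CC_*(Cact^1(n))$ representing the top-dimensional cell, which is the cell used to build the Dyer--Lashof operation $Q_1$ (the single nontrivial one coming from the $E_2$-structure), completing the proof.
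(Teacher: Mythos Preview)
Your iteration of the lemma to obtain the set equality $\tau^{n-1}=\mathcal{T}^{n-1}_{12\cdots n}$ is exactly what the paper does. For the multiplicity-one claim and the identification with the Dyer--Lashof cell, the paper does not argue directly: it simply cites \cite{Kau10}[Proposition 2.13]. Your attempt to supply a self-contained argument is therefore going beyond the paper, which is fine, but the argument as written contains a genuine error in how the operadic composition is modeled.

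The problem is the identification of $\tau\circ_1(-)$ with $B^+_1$. These are different operations. The tree $\tau=B^+_1(scc(2))$ has white root $1$ and a single leaf $2$; the composition $\tau\circ_1\tau'$ replaces lobe $1$ by $\tau'$ and then attaches the surviving leaf (now relabelled $n$) at an arbitrary point of the outside circle of $\tau'$. Hence $\tau\circ_1\tau'$ is \emph{not} a single tree but a sum indexed by the white angles of $\tau'$, each summand obtained by inserting a new white leaf $n$ (with its own new black parent) at that angle. Your sentence ``$\circ_1$ composed with a $0$-arity-shift corolla insertion does not split a single cell into several'' is therefore false, and the proposed recovery map $B^-_w$ is the wrong one: it deletes the white root (vertex $1$), whereas what was added is the leaf $n$.

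The underlying idea is salvageable. In any $\tau''\in\mathcal{T}^{n-1}_{12\cdots n}$ the vertex $n$ is maximal in $\prec_{\tau''}$, hence a white leaf; and since top-degree trees have exactly $n$ black vertices (degree $=m-1$ forces $m=n$), each black vertex has a unique white child, so the black parent of $n$ has no other children. Deleting $n$ together with its black parent is therefore a well-defined map $\mathcal{T}^{n-1}_{12\cdots n}\to\mathcal{T}^{n-2}_{12\cdots(n-1)}$ whose fibre over $\tau'$ is precisely $\{\tau\circ_1\tau'\}$. That gives disjointness of the supports over distinct $\tau'$ and, together with the fact that distinct angles of $\tau'$ yield distinct trees, the multiplicity-one statement. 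If you replace your $B^+_1$/$B^-_w$ paragraph with this ``delete the leaf $n$'' argument, the proof goes through and is more informative than the paper's citation.
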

\begin{proof}
By iterating $i=2,3,\cdots,n-1$, where $n\geq 3$, we see that indeed, we get all the cells indexing $P_{id}=P_n$. By \cite{Kau10}[Proposition 2.13] this iteration also has coefficients $1$ and yields the cell for the Dyer--Lashof operation.
\end{proof}

\begin{remark}
This also allows us to give a concrete homotopy between the right iteration above and the left iteration  $$\underbrace{(\tau\circ_2(\cdots(\tau\circ_2(\tau\circ_2(\tau)))
\cdots))}_{\text{There are $n-1$ $\tau$ and thus $n-2$ $\circ_2$}}=
\t_{id}=B^+_{\sigma_1}(B^+_{\sigma_2}(\cdots B^+_{\sigma_{n-1}}(scc(\sigma_n))\cdots))$$
Here  the support is the single tree $\t_{id}$ in $\T_{id}$, whose cell is the hypercube $I^{n-1}$ that sits at the center of the permutahedron $P_{\sigma}$. 

\end{remark}

\subsubsection{Iterative decomposition into cactus cells}
There is an interesting duality in the cactus decomposition. 

On one hand,
recall from the proof of Theorem  \ref{CactP_n}, that each codim $k-1$ face of $P_n$ is labelled by 
$\mathcal{T}^{m_1-1}_{\mathbf{l}_1}|\mathcal{T}^{m_2-1}_{\mathbf{l}_2}|\cdots | \mathcal{T}^{m_k-1}_{\mathbf{l}_k}$ and the subdivision is given by the elements of this set. More precisely:  for $n\geq 2$, $1\leq k\leq n-1$, fix $m_1,m_2,\cdots,m_k$ satisfying $m_i\geq 1$ and $m_1+m_2+\cdots+m_k=n-1$ and let $\mathbf{l}=\mathbf{l}_1,\mathbf{l}_2,\cdots,\mathbf{l}_k\in \mathrm{dSh}_{\sigma}[m_1,m_2,\cdots,m_k]$.
Then the elements in $\mathcal{T}^{m_1-1}_{\mathbf{l}_1}|\mathcal{T}^{m_2-1}_{\mathbf{l}_2}|\cdots | \mathcal{T}^{m_k-1}_{\mathbf{l}_k}$ are $\tau_1|\tau_2|\cdots|\tau_k$ where each $\tau_i\in \mathcal{T}_{\mathbf{l}_i}^{m_i-1}$ is a tree with the maximal number  ($m_i-1$) of white edges and its partial order is compatible with the total order $\mathbf{l}_i$.

On the other hand, recall from \eqref{kdecompeq}, the top cells of $P_{\sigma}$ are naturally indexed by the fibers of $B^-_{\sigma_1}$
$$
\mathcal{T}^{n-1}_{\sigma}(k)=\coprod_{m_1,\cdots,m_k}\coprod_{\mathbf{l}\in \mathrm{dSh}_{\sigma\backslash \sigma_1}[m_1,\cdots,m_k]}\mathcal{T}_{\sigma}^{n-1}[\mathbf{l}]
$$

To sum this up, 
for fixed $k$, we define 
$\mathcal{T}^{face}_{\sigma\backslash \sigma_1}(k)$ as follows.
$$\mathcal{T}^{face}_{\sigma\backslash \sigma_1}(k)=\coprod_{m_1+\cdots+m_k=n-1}\coprod_{\mathbf{l}_1,\cdots,\mathbf{l}_k \in \mathrm{dSh}_{\sigma\backslash \sigma_1}[m_1,\cdots,m_k]} \mathcal{T}_{\mathbf{l}_1}^{m_1-1}|\mathcal{T}_{\mathbf{l}_2}^{m_2-1}|\cdots|\mathcal{T}_{\mathbf{l}_k}^{m_k-1}\subset 
T_{\sigma\backslash \sigma_1}^{n-1-k}$$
This is the set of trees indexing all the cells making up the codim $k-1$--faces of $P_{\sigma\backslash\sigma_1}$. Then  we have the diagram:
\begin{equation}
\xymatrix{
\mathcal{T}_{\mathbf{l}_1}^{m_1-1}|\mathcal{T}_{\mathbf{l}_2}^{m_2-1}|\cdots|\mathcal{T}_{\mathbf{l}_k}^{m_k-1}\ar@{^{(}->}[d]\ar@<-1ex>[r]_{B^-_b}&
\ar@<-1ex>[l]_{B_b^+}\mathcal{T}_{\mathbf{l}_1}^{m_1-1}\times \mathcal{T}_{\mathbf{l}_2}^{m_2-1}\times \cdots\times \mathcal{T}_{\mathbf{l}_k}^{m_k-1}
\ar@<1ex>[r]^>>>>{B^+_{\sigma_1}}&\ar@<1ex>[l]^<<<<{B^-_{\sigma_1}}\mathcal{T}_{\sigma}^{n-1}[\mathbf{l}]\ar@{^{(}->}[d]\\
\mathcal{T}^{face}_{\sigma\backslash \sigma_1}(k)\ar[rr]^{B_{\sigma_1}^+\circ B_b^-}&& \mathcal{T}^{n-1}_{\sigma}(k)
}
\end{equation}

We  set $\mathcal{T}^{face}_{\sigma\backslash \sigma_1}=\coprod_{k=1}^{n-1}\mathcal{T}^{face}_{\sigma\backslash \sigma_1}(k)$, which is the set indexing all the cells making up all  faces of $P_{\sigma\backslash\sigma_1}$

\begin{proposition}
The map obtained by taking the disjoint union over $k$ of the lower arrows $B_{\sigma_1}^+\circ B_b^-:\mathcal{T}^{face}_{\sigma\backslash \sigma_1}(k)\to  \mathcal{T}^{n-1}_{\sigma}(k)$,
is a bijection:
$B_{\sigma_1}^+\circ B_b^-:\mathcal{T}^{face}_{\sigma\backslash \sigma_1}\to  \mathcal{T}^{n-1}_{\sigma}$.
\end{proposition}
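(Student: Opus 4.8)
The plan is to produce an explicit two-sided inverse one graded piece at a time and then assemble it over the disjoint decompositions of both sides. Fix $k$; for positive integers $m_1,\dots,m_k$ with $\sum_i m_i=n-1$ and $\mathbf{l}=\mathbf{l}_1,\dots,\mathbf{l}_k\in \mathrm{dSh}_{\sigma\backslash\sigma_1}[m_1,\dots,m_k]$, consider the summand $\mathcal{T}^{m_1-1}_{\mathbf{l}_1}|\cdots|\mathcal{T}^{m_k-1}_{\mathbf{l}_k}$ of $\mathcal{T}^{face}_{\sigma\backslash\sigma_1}(k)$ and the summand $\mathcal{T}^{n-1}_{\sigma}[\mathbf{l}]$ of $\mathcal{T}^{n-1}_{\sigma}(k)$. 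First I would note that $B^+_{\sigma_1}\circ B^-_b$ really carries the former into the latter: $B^-_b$ merely cuts the black root, sending $\tau_1|\cdots|\tau_k$ to the forest $(\tau_1,\dots,\tau_k)$ without altering any white valence, and then $B^+_{\sigma_1}$ grafts the $\tau_i$ onto $scc(\sigma_1)$, raising the degree by exactly $k$ to $\sum_i(m_i-1)+k=n-1$ and producing a tree whose leaf paths read $\sigma_1$ followed by a leaf path of some $\tau_i$, hence subsequences of $\sigma_1\sigma_2\cdots\sigma_n$; so the image lies in $\mathcal{T}^{n-1}_{\sigma}[\mathbf{l}]$ by the definition of that set.

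For bijectivity on this piece I would exhibit $B^+_b\circ B^-_w$ as the inverse. By \S\ref{decomppar} every tree in $\mathcal{T}^{n-1}_{\sigma}$ lies in $\Twr$, and its white root, being the unique $\prec_\tau$-minimal white vertex, is the $<_\sigma$-minimum of $[n]$ by compatibility, hence labelled $\sigma_1$; so $B^-_w$ is defined on $\mathcal{T}^{n-1}_{\sigma}[\mathbf{l}]$, and by the defining ``fiber'' description of that set it lands in $\mathcal{T}^{m_1-1}_{\mathbf{l}_1}\times\cdots\times\mathcal{T}^{m_k-1}_{\mathbf{l}_k}$. Remark \ref{inversermk} then supplies $B^+_b\circ B^-_b=B^-_b\circ B^+_b=\mathrm{id}$, $B^-_w\circ B^+_{\sigma_1}=\mathrm{id}$ on the domain of $B^+_{\sigma_1}$, and $B^+_{\sigma_1}\circ B^-_w=\mathrm{id}$ on white-rooted trees with root labelled $\sigma_1$; feeding these into the two composites $(B^+_b\circ B^-_w)\circ(B^+_{\sigma_1}\circ B^-_b)$ and $(B^+_{\sigma_1}\circ B^-_b)\circ(B^+_b\circ B^-_w)$ makes both collapse to the identity. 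Thus $B^+_{\sigma_1}\circ B^-_b$ restricts to a bijection $\mathcal{T}^{m_1-1}_{\mathbf{l}_1}|\cdots|\mathcal{T}^{m_k-1}_{\mathbf{l}_k}\to \mathcal{T}^{n-1}_{\sigma}[\mathbf{l}]$.

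It remains to glue these bijections. On the target, $\mathcal{T}^{n-1}_{\sigma}=\coprod_k\mathcal{T}^{n-1}_{\sigma}(k)$ and \eqref{kdecompeq} present $\mathcal{T}^{n-1}_{\sigma}$ as the disjoint union of the $\mathcal{T}^{n-1}_{\sigma}[\mathbf{l}]$; on the source I would check the analogous union is disjoint by recovering the indexing data from a black-rooted tree $\tau_1|\cdots|\tau_k$: injectivity of $B^+_b$ (Remark \ref{inversermk}) returns $k$ and the branches $\tau_i$, and then $\mathbf{l}_i$ is forced to be the unique subsequence of $\sigma\backslash\sigma_1$ whose image is the label set of $\tau_i$ (subsequences of a non-repeating sequence being pinned down by their images), so $(k,(m_i)_i,\mathbf{l})$ is determined and the summands are pairwise disjoint. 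Consequently $B^+_{\sigma_1}\circ B^-_b$ is a disjoint union of bijections between matching summands of two honest coproducts, hence a bijection $\mathcal{T}^{face}_{\sigma\backslash\sigma_1}\to\mathcal{T}^{n-1}_{\sigma}$ restricting to a bijection $\mathcal{T}^{face}_{\sigma\backslash\sigma_1}(k)\to\mathcal{T}^{n-1}_{\sigma}(k)$ for each $k$. The only slightly delicate points — the nearest thing to an obstacle — are the two bookkeeping facts above: that $B^+_{\sigma_1}$, though not surjective in general, becomes a bijection once restricted to top-degree forests (so that its image is exactly the set $\mathcal{T}^{n-1}_{\sigma}[\mathbf{l}]$ appearing in its definition, with inverse $B^-_w$), and that the branch label sets determine the subsequences $\mathbf{l}_i$, which is what makes both sides genuine coproducts rather than mere unions.
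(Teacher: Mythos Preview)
Your proof is correct and follows essentially the same approach as the paper: both arguments hinge on Remark \ref{inversermk} to see that the pairs $(B^+_b,B^-_b)$ and $(B^+_{\sigma_1},B^-_w)$ are mutually inverse on the relevant pieces, making the composite $B^+_{\sigma_1}\circ B^-_b$ a bijection summand by summand. The paper's proof is a one-liner that simply points to the upper row of the displayed diagram and invokes the remark, whereas you have unpacked the same content in detail---verifying the target of the map, checking that the white root is labelled $\sigma_1$, and confirming disjointness of the coproduct on the source side---but there is no genuine difference in strategy.
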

\begin{proof}
From Remark \ref{inversermk}, we see that in the upper row all the arrows are bijections and this proves the claim.
\end{proof}

The elements in the codomain $\mathcal{T}^{n-1}_{\sigma}$ of $B_{\sigma_1}^+\circ B_b^-$ label the top dimensional cells of the decomposition of $P_n$ into cactus cells. The above proposition means the top dimensional cells of $P_n$ can instead be labelled by the the top dimensional cells of the decomposition of each face of $P_{n-1}$. The figure below uses color to illustrate this from $P_{n-1}$ to $P_n$ for $n=2,3,4$.\\

\begin{figure}[!h]
		\centering
		\includegraphics[width=150mm]{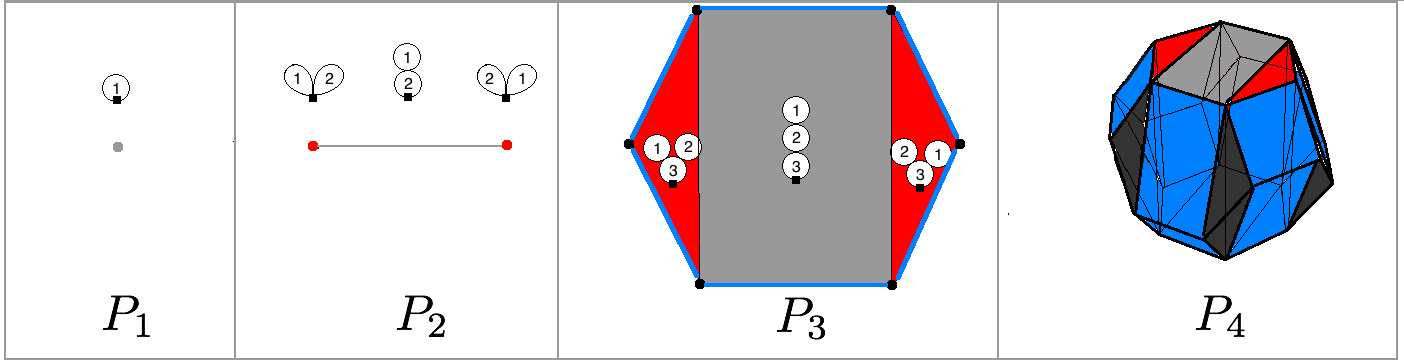}
		\caption{The subdivisions of $P_1$ by $1$, $P_2$ by $21$, $P_3$ by $321$ and $P_4$ by $4321$}
\end{figure}

Even though we are not able to draw the subdivision of $P_5$, we can at least compute the number of top-dimensional cells of it $|\mathcal{T}^4_{54321}|$ using the above bijection, where $|X|$ denote the number of elements of the set $X$. We have

\begin{itemize}
\item $|\mathcal{T}^{face}_{4321}(1)|=|\mathcal{T}^3_{4321}|=15$.
\item $|\mathcal{T}^{face}_{4321}(2)|=30$.
\item $|\mathcal{T}^{face}_{4321}(3)|=36$.
\item $|\mathcal{T}^{face}_{4321}(4)|=4!$.
\end{itemize}

So $|\mathcal{T}^4_{54321}|=\sum_{k=1}^4|\mathcal{T}^{face}_{4321}(k)|=105$.

\subsubsection{Remark}

Our construction is related to a statement \cite{Ber98} [Remark 1.10] .

\textit{``Jim McClure and Jeff Smith construct an $E_2$-operad which acts on topological Hochschild cohomology ... Its multiplication uses prismatic decomposition of the permutahedra $P_k$ (labelled by ``formulae'') which can be described as follows: The image of $P_2\times P_1\times P_{k-1}\rightarrow P_k$ is a prism $\Delta^1\times P_{k-1}$, thus by induction endowed with a prismatic decomposition; it turns out that the (closure of the) complement of the image also admits a prismatic decomposition labelled by the set of proper faces of $P_{k-1}$ ...''}

Namely, the above comment is almost true. It is true that $P_n$ can first be decomposed into two parts: $P_{n-1}\times I$ where $I$ is a closed interval of length $\sqrt{n(n-1)}$  and the closure of the complement of $P_{n-1}\times I$ in $P_n$, then $P_{n-1}\times I$ has the decomposition induced from that of $P_{n-1}$. But the closure of the complement of $P_{n-1}\times I$ in $P_n$ has the decomposition into pieces not labelled by the proper faces of $P_{n-1}$, but by the top dimensional cells from subdivisions of each proper face of $P_{n-1}$.

\subsection{The Permutaheral cover of $Cact^1(n)$}

\begin{definition}
We extend $\mathcal{C}$ from  $\mathcal{T}_{\sigma}$ to $\mathcal{T}_n$ and then let $$\mathcal{C}(n):=\mathrm{colim}_{\mathcal{T}_n}\mathcal{C}.$$ By construction, the resulting space is homeomorphic to $Cact^1(n)$, that is there is a homeomorphism $L_n:\mathcal{C}(n)\approx Cact^1(n)$.
This homeomorphism is actually almost the identity. It is just two different realizations of the same complex, which is why we will write 
$$\mathcal{C}(n)= Cact^1(n).$$
\end{definition}

\begin{proposition}
$Cact^1$ is a quotient of the permutahedral space $\coprod_{\sigma\in S_n}P_n$.
 \end{proposition}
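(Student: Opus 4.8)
The plan is to assemble this from the pieces already in place. By Theorem~\ref{CactP_n}, for each $\sigma\in S_n$ we have a piecewise linear homeomorphism $\mathrm{colim}_{\mathcal{T}_\sigma}C\cong P_n$, equivalently (after identifying the two realizations via $L_n$) an embedding of $P_n$ into $\mathcal{C}(n)=Cact^1(n)$ as the subcomplex $\mathrm{colim}_{\mathcal{T}_\sigma}\mathcal{C}$. First I would note that, since every maximal tree $\tau\in\mathcal{T}_n^{n-1}$ lies in $\Twr$ and has all leaf-paths totally ordered, it determines a total order on $[n]$, hence belongs to some $\mathcal{T}_\sigma$; thus $\mathcal{T}_n=\bigcup_{\sigma\in S_n}\mathcal{T}_\sigma$ as posets (the union, not disjoint union). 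Combined with \eqref{glueeq}, which says $Cact^1(n)$ is covered by the closed top cells, this shows the images of the $P_n$ cover $Cact^1(n)$.

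Next I would make the quotient statement precise: define $q:\coprod_{\sigma\in S_n}P_n\to Cact^1(n)$ by sending the copy of $P_n$ indexed by $\sigma$ via the PL homeomorphism $\mathrm{colim}_{\mathcal{T}_\sigma}\mathcal{C}\cong P_n$ of Theorem~\ref{CactP_n} composed with the inclusion $\mathrm{colim}_{\mathcal{T}_\sigma}\mathcal{C}\hookrightarrow \mathrm{colim}_{\mathcal{T}_n}\mathcal{C}=\mathcal{C}(n)=Cact^1(n)$. This $q$ is continuous on each piece, hence continuous on the coproduct, and surjective by the covering statement above. Since the coproduct of $n!$ copies of the compact space $P_n$ is compact and $Cact^1(n)$ is Hausdorff (it is a finite regular CW complex), $q$ is automatically a closed map, hence a quotient map. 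Therefore $Cact^1(n)\cong \bigl(\coprod_{\sigma\in S_n}P_n\bigr)/{\sim}$, where $\sim$ is the equivalence relation generated by $q$, i.e.\ two points are identified iff they have the same image in $\mathcal{C}(n)$; concretely, $x\in P_\sigma$ and $y\in P_\tau$ are identified iff there is $\tau_0\in\mathcal{T}_\sigma\cap\mathcal{T}_\tau$ with $x=y$ in the cell $\mathcal{C}_{\tau_0}$, in exact analogy with the relation $\sim_{\mathcal F}$ defining $\mathcal{F}(n)$.

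The only genuine content beyond bookkeeping is the observation that the colimit defining $\mathcal{C}(n)=\mathrm{colim}_{\mathcal{T}_n}\mathcal{C}$ can be computed by first taking the colimits over the $n!$ sub-posets $\mathcal{T}_\sigma$ and then gluing the resulting polytopes $P_n$ along the sub-posets $\mathcal{T}_\sigma\cap\mathcal{T}_\sigma'$ — this is a standard cofinality/decomposition argument for colimits over a poset that is a union of down-closed (by Lemma~\ref{mainlem}) sub-posets, together with the fact that $\mathcal{C}$ restricted to each $\mathcal{T}_\sigma$ realizes to $P_n$. I expect the main (and only) obstacle to be verifying carefully that the relation $\sim$ is exactly the CW identification — i.e.\ that $q$ does not glue more than the colimit $\mathcal{C}(n)$ already glues — which reduces to checking that a point of the interior of a cell $\mathcal{C}_{\tau_0}$ has, for each $\sigma$ with $\tau_0\in\mathcal{T}_\sigma$, a well-defined preimage in $P_\sigma$ and that these are identified precisely along the shared sub-poset; since the cells and their attaching maps are the same in both $C$ and $\mathcal{C}$ (only the geometric realization of the top cells differs, via the PL homeomorphism of Theorem~\ref{CactP_n}), this is routine. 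I would then simply invoke Theorem~\ref{CactP_n} and \eqref{glueeq} and conclude.
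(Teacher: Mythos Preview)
Your proposal is correct and follows essentially the same approach as the paper: compute the colimit over $\mathcal{T}_n$ iteratively by first taking the colimits over the down-closed sub-posets $\mathcal{T}_\sigma$ (each yielding $P_n$ by Theorem~\ref{CactP_n}) and then gluing along their intersections, with the resulting relation $\sim_{\mathcal C}$ described exactly as you do. You supply more detail than the paper (the compact--Hausdorff argument for $q$ being a quotient map, the cofinality remark), and one phrase is slightly imprecise---a maximal tree need not \emph{determine} a total order, only admit one as a linear extension of $\prec_\tau$---but the conclusion and the overall argument are the same.
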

 \begin{proof}
By taking the colimit iteratively that is first over each $T_{\sigma}$ and then gluing the resulting spaces further and using  Theorem \ref{CactP_n}, we can write 
\begin{equation}
Cact^1(n)=\mathcal{C}(n)=\left(\coprod_{\sigma\in S_n}P_n\right)/\sim_{\mathcal{C}},
\end{equation}
Here explicitly, for $P_n$ indexed by $\sigma$,  the subdivision is indexed by elements in $\mathcal{T}_{\sigma}$ and for $x\in P_n$ indexed by $\sigma$ and $y\in P_n$ indexed by $\nu$, $x\sim_{\mathcal{C}}y$ if there is $\tau\in \mathcal{T}_{\sigma}\cap \mathcal{T}_{\nu}$ such that $x=y$ in $\mathcal{C}_{\tau}$. 
 \end{proof}
 Examples when $n=3$ and $n=4$ are shown in Figure \ref{fig:6P3} and Figure~\ref{fig:24P4}, respectively.

\begin{figure}
		\centering
		\includegraphics[width=150mm]{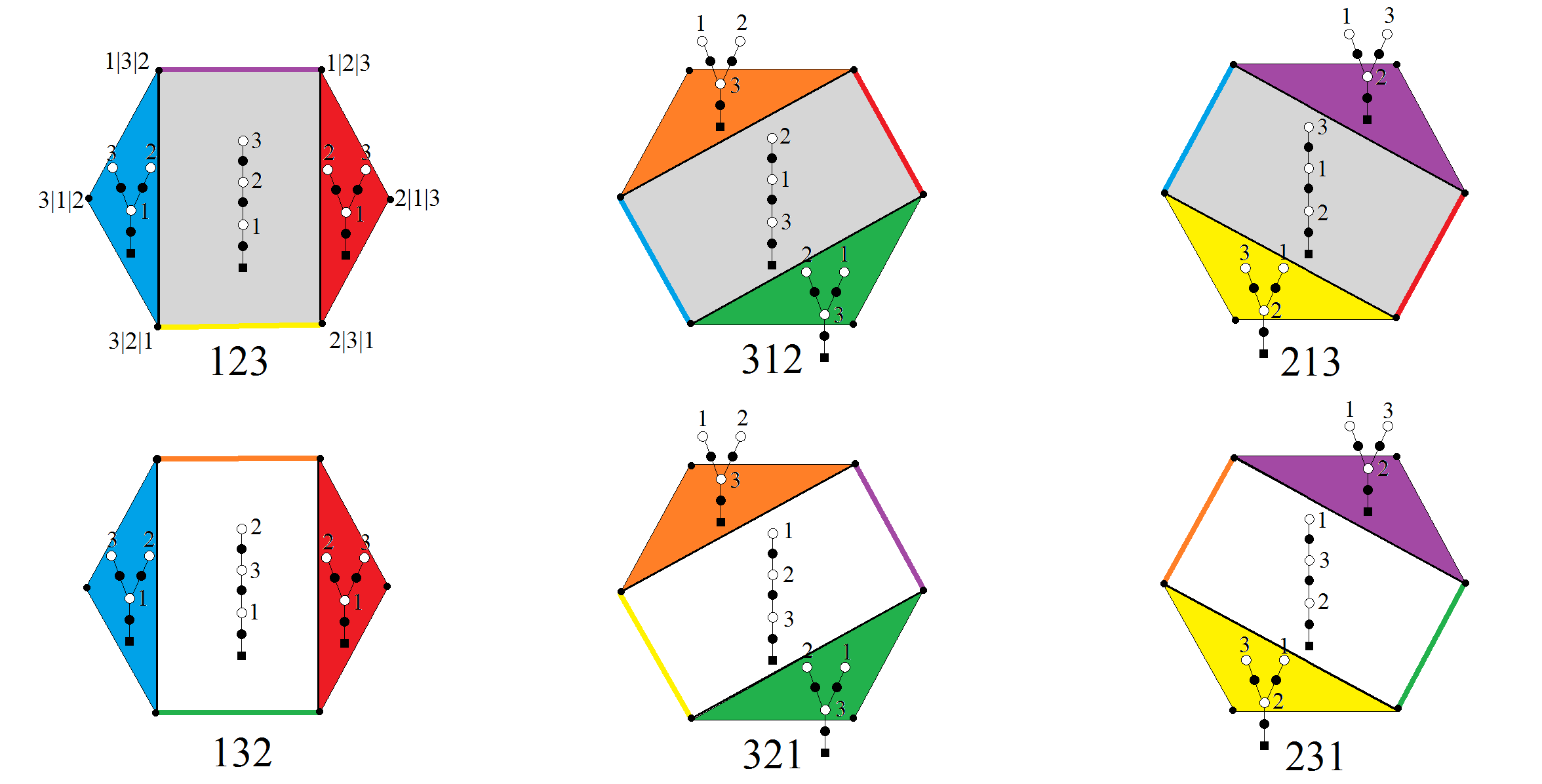}
		\caption{  \label{fig:6P3}$\mathcal{C}(3)$ is obtained by gluing $6$ copies of $P_3$, one for each $\sigma\in P_3$. For simplicity, the indexing elements from $\mathcal{J}^0_{\sigma}$ for the vertices are only shown for the first $P_3$ ($\sigma=123$). The points that are to be glued are labelled by the same color and put in the same position.}
\end{figure}

\begin{figure}
		\centering
		\includegraphics[width=150mm]{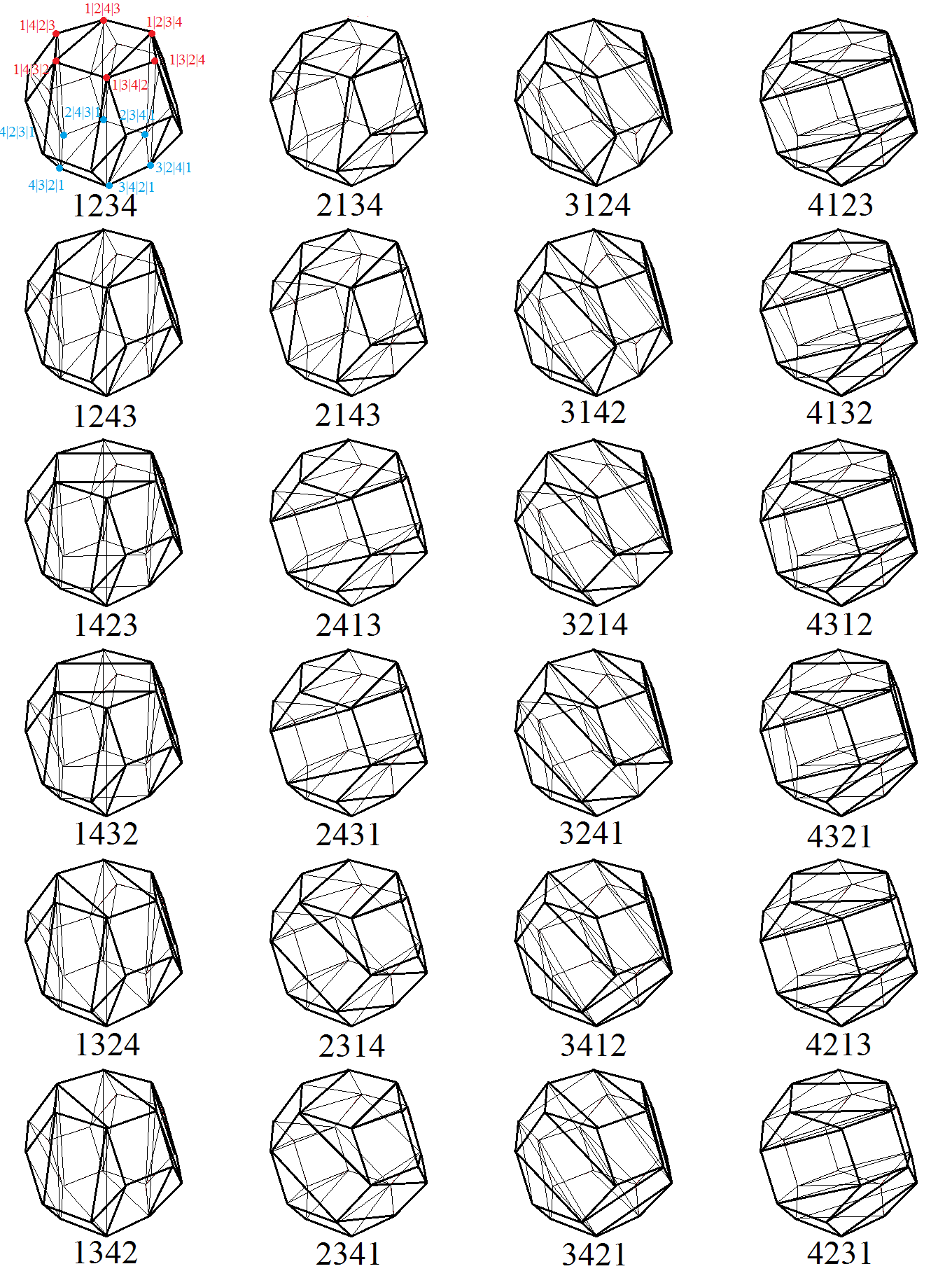}
		\caption{\label{fig:24P4}$\mathcal{C}(4)$ is obtained by gluing $24$ copies of $P_4$, one for each $\sigma\in P_4$. For simplicity, only twelve of the indexing elements from $\mathcal{J}^0_{\sigma}$ for the vertices are shown for the first $P_4$ ($\sigma=1234$). One can find out which cells are glued.}
		
\end{figure}

\section{Homotopy equivalence between the permutahedral spaces $Cact^1(n)$ and $\mathcal{F}(n)$}
\label{homotopysec}

The two spaces $\mathcal{F}(n)$ and $\mathcal{C}(n)$ are closely related as quotients of permutahedral space.

$$\xymatrix{
&\coprod_{\sigma\in S_n}P_n\ar[dl]_{p_{\CalF}}\ar[dr]^{p_{\CalC}}&\\
\CalF(n)&&\CalC(n)}
$$
 But the gluings for $\mathcal{F}(n)$ only occur on the proper faces of $P_n$ while those for $\mathcal{C}(n)$ also happen in the interior of $P_n$. In fact, only the interiors of the hyper-cubes $\mathcal{C}_{B^+_{\sigma_1}(B^+_{\sigma_2}(\cdots B^+_{\sigma_{n-1}}(scc(\sigma_n))\cdots))}$ in each of the $n!$ copies of $P_n$ are not glued. The gluings for $\CalC(n)$ are cell-wise and we identify the cells $C(\tau)$ in the decomposition of $P_{\sigma}$ and $P_{\nu}$ if the order $\prec_{\tau}$ is
compatible with both $<_{\sigma}$ and $<_{\nu}$.

 \begin{lemma}
$p_{\CalC}$ is constant on fibers of $p_{\CalF}$ and hence there is an induced map $\overline{1}_n$
\begin{equation}
\xymatrix{
&\coprod_{\sigma\in S_n}P_n\ar[dl]_{p_{\CalF}}\ar[dr]^{p_{\CalC}}&\\
\CalF(n)\ar[rr]^{\overline{1}_n}&&\CalC(n)}
\end{equation}
\end{lemma}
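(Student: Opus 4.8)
The statement is that the composite $p_{\CalC} : \coprod_{\sigma\in S_n}P_n \to \CalC(n)$ factors through $p_{\CalF}$. Since both maps are quotient maps from the same space $\coprod_{\sigma\in S_n}P_n$, it suffices to show that the equivalence relation $\sim_{\CalF}$ is finer than $\sim_{\CalC}$, i.e.\ $x\sim_{\CalF}y$ implies $x\sim_{\CalC}y$; then $p_{\CalC}$ is constant on each $p_{\CalF}$-fiber and the universal property of the quotient topology produces a (continuous) map $\overline{1}_n:\CalF(n)\to\CalC(n)$ with $\overline{1}_n\circ p_{\CalF}=p_{\CalC}$.

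\textbf{Key steps.} First I would recall the two relations explicitly in terms of the indexing posets. For $x\in P_n$ indexed by $\sigma$ and $y\in P_n$ indexed by $\tau$, one has $x\sim_{\CalF}y$ iff there is $\mathbf{a}\in\mathcal{J}_\sigma\cap\mathcal{J}_\tau$ with $x=y$ in $\mathcal{F}_{\mathbf{a}}$, while $x\sim_{\CalC}y$ iff there is a tree $\mathbf{t}\in\T_\sigma\cap\T_\tau$ with $x=y$ in $\mathcal{C}_{\mathbf{t}}$. Next I would exhibit, for each unshuffle $\mathbf{a}=\l_1|\cdots|\l_k\in\mathcal{J}(n)$, a corresponding tree in $\T_n$ realizing the same subpolytope of $P_n$ with its subdivision: one natural candidate is the tree $B^+_b$ applied to the forest whose $i$-th piece is $\t_{\l_i}$ (the corolla-layered tree on the block $\l_i$), which lies in $\T_\sigma$ precisely when $\mathbf{a}\in\mathcal{J}_\sigma$. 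The crucial compatibility is that the face $\mathcal{F}_{\mathbf{a}}\cong P_{m_1}\times\cdots\times P_{m_k}$ of the permutahedron coincides, as a subset of $P_n$, with the union of the closed cactus cells $\mathcal{C}_{\mathbf{t}}$ over $\mathbf{t}\in\T_\sigma[\mathbf{a}]$, and that this identification is independent of $\sigma$ (depending only on $\mathbf{a}$). This is essentially the content of the iterated-colimit description of $\mathrm{colim}_{\T_\sigma}\mathcal{C}\cong P_n$ from Theorem \ref{CactP_n}, where the codimension $k-1$ face labelled $\l_1|\cdots|\l_k$ is subdivided by $\T^{m_1-1}_{\l_1}|\cdots|\T^{m_k-1}_{\l_k}$. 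So if $x$ and $y$ agree in $\mathcal{F}_{\mathbf{a}}$, they lie in a common closed cactus cell $\mathcal{C}_{\mathbf{t}}$ with $\mathbf{t}\in\T_\sigma\cap\T_\tau$ (take the minimal such $\mathbf{t}$, i.e.\ the one whose open cell contains the common point), hence $x\sim_{\CalC}y$. Finally, continuity of $\overline{1}_n$ is automatic from the quotient topology on $\CalF(n)$.

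\textbf{Main obstacle.} The subtle point is the identification ``on the nose'' of the face $\mathcal{F}_{\mathbf{a}}\subset P_n$ (defined via the convex-hull/linear-realization functor $\mathcal{F}$) with the corresponding union of cactus cells $\mathcal{C}_{\mathbf{t}}\subset P_n$ (defined via the functor $\mathcal{C}$): both are realizations of subposets of the face poset of the same permutahedron, but one must check the two PL structures are compatible as subsets of $\R^n$ with identical coordinates, not merely abstractly homeomorphic. This is exactly where the strengthening at the end of the proof of Theorem \ref{CactP_n} --- that all colimits and combinatorics can be performed on the level of polytopes in $\R^n$ before realization --- is needed; granting that, the argument is bookkeeping about which tree realizes a given unshuffle, handled by the $B^\pm_b$ and $B^\pm_{\sigma_1}$ operators and the decomposition \eqref{kdecompeq}.
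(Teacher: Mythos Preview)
Your proposal is correct and follows essentially the same route as the paper: show $x\sim_{\CalF}y\Rightarrow x\sim_{\CalC}y$ by taking the witnessing unshuffle $\mathbf{l}=\l_1|\cdots|\l_k\in\mathcal{J}_\sigma\cap\mathcal{J}_\nu$ and then locating, via the cactus decomposition of the face $\mathcal{F}_{\mathbf{l}}\cong P_{m_1}\times\cdots\times P_{m_k}$ from Theorem~\ref{CactP_n}, a tree $\tau=\tau_1|\cdots|\tau_k$ with $\tau_i\in\T_{\l_i}$ (hence $\tau\in\T_\sigma\cap\T_\nu$) such that $x=y$ in $\mathcal{C}_\tau$. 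Your explicit flagging of the ``on the nose'' identification of $\mathcal{F}_{\mathbf{l}}$ with the union of cactus cells is a point the paper's short proof takes for granted, but you correctly trace it to the polytope-level strengthening at the end of Theorem~\ref{CactP_n}.
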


\begin{proof}

 If $x\sim_{\mathcal{F}}y$ where $x\in P_n$ indexed by $\sigma$ and $y\in P_n$ indexed by $\nu$, let $\mathbf{l}=\mathbf{l}_1|\cdots|\mathbf{l}_k$ be an element in $\mathcal{J}_n$ such that $\mathbf{l}\in \mathcal{J}_{\sigma}\cap\mathcal{J}_{\nu}$ and $x=y$ in the interior of $\mathcal{F}(\mathbf{l})$. Then we can find $\tau=\tau_1|\cdots|\tau_k\in \mathcal{T}_{\sigma}\cap\mathcal{T}_{\nu}$ where $\tau_i\in \mathcal{T}_{\mathbf{l}_i}$, $i=1,\cdots,k$  by using cactus decomposition of each $P_{|\mathbf{l}_i|}$ associated to $\mathbf{l}_i$ such that $x=y$ in $\mathcal{C}_{\tau}$. So $x\sim_{\mathcal{C}}y$. 

\end{proof} 
It is easily seen that this map is again a quotient map.

\begin{proposition}
The map $\overline{1}_n$ is a quasi--isomorphism. Furthermore
it induces a map on the level of cellular chains 
$CC_*(\overline{1}_n):CC_*(\CalF_n)\to CC_*(\CalC_n)=CC_*(Cact^1(n))$ where  $CC_*(P_{\sigma})\to \sum_{\prec_\t\text{ compatible with } <_{\sigma}} C(\t)$.

\end{proposition}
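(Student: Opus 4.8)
The plan is to prove the two assertions separately: first that $\overline{1}_n$ is a quasi-isomorphism, and then that it is cellular and induces the stated map on cellular chains. For the first part I would exploit the fact, established right before the statement, that $\overline{1}_n$ is a quotient map, together with the permutahedral-cover description of both sides coming from Theorem \ref{CactP_n} and the Blagojevi\'c--Ziegler theorem. The key point is that $\overline{1}_n$ is obtained by passing to the quotient from the identity map on $\coprod_{\sigma\in S_n}P_n$; hence on each closed permutahedron $P_\sigma$ it restricts to the identification of $P_n$ with the cactus-cell decomposition $\mathrm{colim}_{\T_\sigma}\mathcal C$, which is already a piecewise-linear homeomorphism. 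The only way $\overline{1}_n$ fails to be a homeomorphism globally is that it collapses the extra interior identifications: the cell $\mathcal C_\tau$ for a non-total $\prec_\tau$ occurring in $P_\sigma$ is glued to its counterpart in every $P_\nu$ with $\prec_\tau$ compatible with $<_\nu$, whereas in $\CalF(n)$ only the proper faces of the $P_n$'s are glued. So I would analyze the point preimages of $\overline{1}_n$: the fiber over a point in the interior of a cell $\mathcal C_\tau$ is the set of points in the various $P_\nu$ that $p_{\CalF}$ keeps distinct but that $p_{\CalC}$ identifies. The plan is to show each such fiber is contractible (in fact, I expect it to be a cone over a subcomplex built from the poset of permutations compatible with $\prec_\tau$, or at worst connected with trivial higher homotopy), and then invoke a Vietoris--Begle / quasifibration-type argument, or more simply a CW-approximation argument on chains, to conclude $CC_*(\overline 1_n)$ is a quasi-isomorphism. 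Alternatively — and this may be cleaner — I would compare with the known results: $\CalF(n)\simeq F(\mathbb R^2,n)$ (Blagojevi\'c--Ziegler) and $Cact^1(n)=\mathcal C(n)\simeq F(\mathbb R^2,n)$ as $E_2$-spaces (Fiedorowicz/Berger), so both have the homotopy type of $K(PBr_n,1)$; it then suffices to check $\overline 1_n$ induces an isomorphism on $\pi_1$ (or $H_1$), which can be read off from the $2$-skeleton, and since both spaces are aspherical this forces a weak equivalence, hence a quasi-isomorphism. I would present the first, more self-contained argument as the main line and mention the second as a cross-check.

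For the second assertion I would argue that $\overline{1}_n$ is cellular for the CW structures induced by $p_{\CalF}$ and $p_{\CalC}$. The cells of $\CalF(n)$ are the images under $p_{\CalF}$ of the faces of the $P_\sigma$, indexed by $\mathcal J(n)$; the cells of $\mathcal C(n)=Cact^1(n)$ are the $\mathring C_\tau$, $\tau\in\T_n$, equivalently the images under $p_{\CalC}$ of the cactus cells $\mathcal C_\tau$ sitting inside the $P_\sigma$. Since on each $P_\sigma$ the map is the identity onto the cactus-cell decomposition, a face of $P_\sigma$ indexed by $\l_1|\cdots|\l_k\in\mathcal J_\sigma$ — affinely $P_{m_1}\times\cdots\times P_{m_k}$ — is carried to the union of those cactus cells $\mathcal C_\tau$ with $\tau=\tau_1|\cdots|\tau_k$, $\tau_i\in\T_{\l_i}$, by the recursive description of the cactus decomposition of a product of permutahedra used in the proof of Theorem \ref{CactP_n}. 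That is exactly the subdivision of the $(n-k)$-dimensional face by cactus cells, so the image is a subcomplex of $Cact^1(n)$ and $\overline 1_n$ is cellular; in particular its top-dimensional behavior on $P_\sigma$ is the stated assignment $CC_*(P_\sigma)\to\sum_{\prec_\tau\text{ compatible with }<_\sigma}C(\tau)$, with coefficients $1$ because the cactus decomposition of $P_n$ associated to $\sigma$ is a genuine subdivision (no folding, Theorem \ref{CactP_n}). The induced chain map $CC_*(\overline 1_n):CC_*(\CalF_n)\to CC_*(\mathcal C_n)$ then sends the generator indexed by $\mathbf a\in\mathcal J(n)$ to the sum of the cactus-cell generators in the corresponding subdivided face, with signs and multiplicities determined by the (PL, hence degree-one) identifications. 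Combining with the first part — that $\overline 1_n$ is a quasi-isomorphism of spaces, and that for CW pairs a cellular weak equivalence induces an isomorphism on cellular homology — gives that $CC_*(\overline 1_n)$ is a quasi-isomorphism.

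The main obstacle I anticipate is the contractibility (or at least homological triviality) of the point-preimages of $\overline 1_n$ over cells $\mathcal C_\tau$ with $\prec_\tau$ far from total: one must understand, for a fixed partial order $\prec$ on $[n]$, the combinatorial structure of the set of total orders refining $\prec$ and how the corresponding copies of the face are glued in $\CalF(n)$ versus identified in $\mathcal C(n)$, and show the resulting comparison space is acyclic. I expect this to follow from a shelling or poset-fiber lemma (the order complex of the interval $\{\sigma : \prec\ \text{compatible with}\ <_\sigma\}$ being contractible, e.g. because it is a face poset of a product of permutahedra or admits a conical/closure operator), but making it precise uniformly in $\tau$ is the delicate step; the more tedious verifications here are the kind of thing the paper relegates to its Appendix, and I would do likewise, stating the key lemma and deferring the shelling-type bookkeeping.
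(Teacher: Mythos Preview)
Your proposal is correct but takes a different route from the paper. The paper's proof is two sentences: the cellular description is declared ``clear from the description above'' (i.e.\ from Theorem~\ref{CactP_n}, exactly as you argue in your second assertion), and for the quasi--isomorphism it simply cites that $\CalF(n)$ is a $K(PB_n,1)$ and that \cite{Kau02} proves the same for $Cact^1(n)$. This is your ``alternative'' cross-check, except the paper does not even pause to verify the $\pi_1$-isomorphism; it effectively defers the genuine content to Theorem~\ref{2nd}, where an explicit homotopy inverse $\overline{h}_n$ is built, and remarks that this makes the quasi--isomorphism follow ``without resorting to abstract recognition principles.''

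Your main line --- analyzing the fibers of $\overline{1}_n$ over each open cell $\mathring C_\tau$ and showing them acyclic via a poset-fiber/shelling argument on the set of linear extensions of $\prec_\tau$ --- is not what the paper does at all. It would give a self-contained combinatorial proof not depending on either the Blagojevi\'c--Ziegler retraction or the prior $E_2$ identifications of $Cact^1$, which is a genuine advantage; the cost is the ``delicate step'' you already flagged, namely uniform contractibility of those comparison spaces. The paper sidesteps this entirely by quoting the $K(\pi,1)$ results and then, for an honest argument, constructing the explicit homotopy of \S\ref{homotopypar}. So: your cellular-chain argument matches the paper's, your alternative matches (and slightly tightens) the paper's stated proof, and your primary fiber argument is a different, more intrinsic approach that the paper does not pursue.
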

\begin{proof}
The map on the cellular level is clear from the description above.
It is well known that $\CalF_n$ has the homotopy type of 
$K(PB_n,1)$ and it is proved in \cite{Kau02} that the same holds for $Cact^1(n)$, which shows that it is a quasi--isomorphism. 
\end{proof}

In the remainder of the section, we will prove a little more, namely we will prove that $\overline{1}_n$ is a homotopy equivalence by constructing an explicit homotopy inverse 
$\overline{h}_n$.  The qausi--isomorphism part of the above proposition then follows without resorting to abstract recognition principles.

\begin{theorem}
\label{2nd}
$\overline{1}_n:\mathcal{F}(n)\rightarrow \mathcal{C}(n)$ is a homotopy equivalence with explicit homotopy inverse $\overline{h}_n$
constructed in \S\ref{homotopypar}.
\end{theorem}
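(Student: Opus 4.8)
The plan is to build $\overline{h}_n$ by first defining a compatible family of self-maps on each copy of the permutahedron $P_n$ and showing these descend to the quotients. The key point is that $\overline{1}_n$ fails to be a homeomorphism only because the interiors of the hypercubes $\mathcal{C}_{\t_\sigma}$ (with $\t_\sigma = B^+_{\sigma_1}(\cdots B^+_{\sigma_{n-1}}(scc(\sigma_n))\cdots)$) are not glued in $\CalF(n)$, whereas everything on the cells indexed by non-total orders \emph{is} glued the same way on both sides. So on each $P_\sigma$ I would construct a PL self-map $h_\sigma : P_n \to P_n$ whose effect is to collapse $P_n$ onto (a neighborhood of) the central hypercube $\mathcal{C}_{\t_\sigma}$, realized as a deformation of the cactus-cell decomposition $\mathrm{colim}_{\T_\sigma}\mathcal{C}$ from Theorem~\ref{CactP_n}. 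The idea is that on the union of proper faces of $P_n$, where the gluing agrees on both sides, $h_\sigma$ should be compatible across different $\sigma$ (i.e.\ agree on overlaps $\mathcal{C}_\tau$ with $\prec_\tau$ compatible with several total orders), so that the $h_\sigma$ glue to a well-defined map $\overline{h}_n : \CalC(n) \to \CalF(n)$; and $\overline{h}_n$ should be a section-like map, i.e.\ $\overline{1}_n \circ \overline{h}_n \simeq \mathrm{id}_{\CalC(n)}$ and $\overline{h}_n \circ \overline{1}_n \simeq \mathrm{id}_{\CalF(n)}$, with the homotopies again assembled from the compatible $P_n$-level straight-line (or PL) homotopies.

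The steps, in order, are as follows. First, for each $\sigma \in S_n$ I would make precise the PL map $h_\sigma$ on $P_n$ using the cactus-cell subdivision: send each cactus cell $\mathcal{C}_\tau$ into the sub-complex corresponding to trees obtained by a canonical "leftward/rightward normalization" of $\tau$, so that the image of every cell lands in cells whose gluing is common to $\CalF$ and $\CalC$, except the central cube which maps to itself. Second, I would verify the crucial compatibility: if $\mathcal{C}_\tau$ appears in both $P_\sigma$ and $P_\nu$ (i.e.\ $\prec_\tau$ is compatible with $<_\sigma$ and $<_\nu$), then $h_\sigma$ and $h_\nu$ agree on $\mathcal{C}_\tau$; this is where the operators $B^\pm_{b/w}$ of \S\ref{Bpmpar} do the bookkeeping, since the normalization on a tree $\tau$ is intrinsic to $\tau$ and does not see the ambient $\sigma$. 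Third, I would check that the induced $\overline{h}_n$ is continuous (automatic, since $p_{\CalC}, p_{\CalF}$ are quotient maps and the $h_\sigma$ respect the identifications) and that $\overline{1}_n \circ \overline{h}_n$ and $\overline{h}_n \circ \overline{1}_n$ are each connected to the identity by a straight-line PL homotopy on each $P_n$ that is again compatible across $\sigma$, hence descends. Fourth, I would note that the quasi-isomorphism claim of the previous proposition then follows for free, since a homotopy equivalence induces isomorphisms on all homotopy and homology groups.

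The main obstacle I expect is the \emph{compatibility of the homotopies across different copies of $P_n$}: the individual maps $h_\sigma$ are not hard to write down on a single permutahedron using its cactus-cell subdivision, but ensuring that the straight-line homotopies $H_\sigma : P_n \times I \to P_n$ (from $h_\sigma \circ (\text{subdivision map})$ to $\mathrm{id}$, or the relevant composites) agree on every shared cell $\mathcal{C}_\tau$ — including in the \emph{interior} of $\tau$, not just on its boundary — is delicate because the cactus subdivisions of $P_\sigma$ and $P_\nu$ are genuinely different subdivisions of $P_n$ that share only the sub-complex on non-total orders. The resolution should be to phrase $h_\sigma$ and $H_\sigma$ entirely in terms of the functor $\mathcal{C}$ on $\T_\sigma$ and the $B^\pm$ operators, so that on each $\mathcal{C}_\tau$ the formula depends only on $\tau$ (equivalently, factors through $\mathrm{dSh}$-data and the poset $\mathcal{T}_n$) and therefore matches automatically on overlaps; the more tedious verifications of PL-ness and of the homotopy identities are the "tedious details" the introduction promises to relegate to the Appendix.
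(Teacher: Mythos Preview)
Your high-level framework --- build self-maps $h_\sigma:P_n\to P_n$ together with homotopies $H_\sigma$ satisfying compatibility conditions so that everything descends through $p_{\CalF}$ and $p_{\CalC}$ --- is exactly the paper's strategy (Remark~\ref{homotopyrmk}, conditions $(*1)$--$(*3b)$), and you correctly locate the main difficulty in the cross-$\sigma$ compatibility of the homotopies.

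There are, however, two genuine gaps. First, the direction of $h_\sigma$ is stated backwards: ``collapse $P_n$ onto the central hypercube'' cannot give a well-defined $\overline{h}_n:\CalC(n)\to\CalF(n)$, because a shared cell $\mathcal{C}_\tau$ (with $\prec_\tau$ non-total) would then land in the \emph{interiors} of distinct $P_\sigma$, $P_\nu$, which are not identified under $\sim_{\CalF}$. What is needed --- and what your later sentence ``the image of every cell lands in cells whose gluing is common'' actually describes --- is the opposite: $h_\sigma$ must push every cactus cell other than $\mathcal{C}_{\t_\sigma}$ onto $\partial P_n$. The paper in fact \emph{expands} the central cube homeomorphically onto all of $P_n$ while crushing the outer shells to the boundary.

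Second, and more seriously, the hope that ``straight-line PL homotopies'' with formulas ``intrinsic to $\tau$'' will suffice does not survive contact with conditions $(*2)$ and $(*3a)$: for $x\sim_{\CalF} y$ the path $H_\sigma(x,t)$ must remain in the common face for all $t$, and for $x\sim_{\CalC} y$ it must remain in common cactus cells; a straight line in $P_n$ will generically leave both. The paper's construction is accordingly much more elaborate than a cell-wise formula: each cell $P_{m_1}\times\cdots\times P_{m_k}\times\Delta^k$ is retracted onto the part of its boundary \emph{not} attached to lower-branching shells (Proposition~\ref{ProductRetract}), these retractions are chained using $\epsilon$-tab extensions into adjacent shells (Proposition~\ref{extended}), and an \emph{iterated cone construction} over the faces of $P_n$ is used to carry the inductively defined lower-dimensional homotopies from $\partial P_n$ into a controlled collar in the interior. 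The homotopy $H_\sigma$ then runs in $n$ time-steps indexed by initial branching number, with a nested subdivision of $I$ so that the recursively defined face homotopies fit inside. Your ``intrinsic to $\tau$'' intuition does survive in the form that the retraction on each extended cell $E_{\mathbf{l}}$ depends only on $\mathbf{l}$, but turning this into globally compatible homotopies requires the cone and tab machinery, not straight lines.
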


\begin{proof}
This follows from Proposition \ref{homotopyprop}  below.
\end{proof}

The way the maps are constructed is by considering lifts along one projection, then a map:   $f:\coprod_{\sigma\in S_n}P_n \to \coprod_{\sigma\in S_n}P_n$ followed by the other projection.
We will call the resulting map the map induced by $f$.
For the induced map to exist, of course $f$ should be suitably constant along fibers.
In particular, the map  $\overline{1}_n$ is defined by lifting along $p_{\CalF}$
and then simply projecting along $p_{\CalC}$. Thus it is induced by the identity map $1_n$ which is the identity on all of the $P_{\sigma}$.

\begin{remark}
\label{homotopyrmk}
We will  describe the homotopy inverse  $\overline{h}_n:\mathcal{C}(n)\rightarrow \mathcal{F}(n)$ as a map induced from $h_n:=\coprod_{\sigma\in S_n}h_{\sigma}:\coprod_{\sigma\in S_n}P_n\rightarrow \coprod_{\sigma\in S_n}P_n$.

That is, we consider the diagram
$$
\xymatrix{
\coprod_{\sigma\in S_n}P_n \ar[d]^{p_\CalF}&\ar[l]_{h_n=\coprod h_{\sigma}} \coprod_{\sigma\in S_n}P_n\ar[d]_{p_\CalC}\\
\CalF(n)&\ar[l]_{\overline{h_n}} \CalC(n)
}
$$ 
with the condition  that $h_n(x)\sim_{\mathcal{F}}h_n(y)$ if $x\sim_{\mathcal{C}}y$.

This will be achieved by having each $h_{\sigma}$ map all points in $P_n$ other than those in the interior of $\mathcal{C}_{B^+_{\sigma_1}(B^+_{\sigma_2}(\cdots B^+_{\sigma_{n-1}}(scc(\sigma_n))\cdots))}$ to proper faces of $P_n$ and then analogous conditions on the the proper faces of $P_n$ are inductively satisfied.

We will define $h_{\sigma}$ and the homotopy showing it is a homotopy inverse at the same time. That is, we will define  $H_{\sigma}: P_n\times I\rightarrow P_n$  and then set $h_{\sigma}=H_{\sigma}(\cdot,1)$ for each $\sigma$. 

To prove the homotopy equivalence, we notice that the two maps  $\overline{h}_n\circ \overline{1}_n:\mathcal{F}(n)\rightarrow\mathcal{F}(n)$ and 
$\overline{1}_n\circ \overline{h}_n:\mathcal{C}(n)\rightarrow \mathcal{C}(n)$ are both induced from $h_n:\coprod_{\sigma\in S_n}P_n\rightarrow \coprod_{\sigma\in S_n}P_n$, in the sense that we have the diagrams
$$
\xymatrix{
\coprod_{\sigma\in S_n}P_n \ar[d]^{p_\CalF}\ar[r]^{h_n\circ 1_n=h_n}& \coprod_{\sigma\in S_n}P_n\ar[d]_{p_\CalF}\\
\CalF(n)\ar[r]^{\overline{h_n}\circ\overline{1_n}} &\CalF(n)
}
\quad
\xymatrix{
\coprod_{\sigma\in S_n}P_n \ar[d]^{p_\CalC}&\ar[l]_{1_n\circ h_n=h_n} \coprod_{\sigma\in S_n}P_n\ar[d]_{p_\CalC}\\
\CalC(n)&\ar[l]_{\overline{1_n}\circ \overline{h_n}} \CalC(n)
}
$$ 

This means that if the homotopies proving the homotopy equivalence are   $H_{\mathcal{F}}$ and $H_{\mathcal{C}}$, i.e.\ $1_{\mathcal{F}(n)}\simeq_{H_{\mathcal{F}}}\overline{h}_n\circ \overline{1}_n$ and $1_{\mathcal{C}(n)}\simeq_{H_{\mathcal{C}}}\overline{1}_n\circ \overline{h}_n$, we can look for a common homotopy 
$\coprod_{\sigma\in S_n}H_{\sigma}$ inducing both $H_{\mathcal{F}}$ and $H_{\mathcal{F}}$.

This homotopy has to and will satisfy the following conditions

\begin{itemize}
\item[$(*1)$] $H_{\sigma}(\cdot,0)=1_{\sigma}:P_n\rightarrow P_n$.

\item[$(*2)$] If $x\sim_{\mathcal{F}}y$ where $x$ is in $P_n$ indexed by $\sigma$ and $y$ is in $P_n$ indexed by $\nu$, then $H_{\sigma}(x,t)\sim_{\mathcal{F}}H_{\nu}(y,t)$ for all $t\in I$.
\item [$(*3)$] If $x\sim_{\mathcal{C}}y$ where $x$ is in $P_n$ indexed by $\sigma$ and $y$ is in $P_n$ indexed by $\nu$, then
\begin{itemize}
\item[$(*3a)$] $H_{\sigma}(x,t)\sim_{\mathcal{C}}H_{\nu}(y,t)$ for all $t\in I$, and
\item[$(*3b)$] $H_{\sigma}(x,1)\sim_{\mathcal{F}}H_{\nu}(y,1)$.\\
\end{itemize}
\end{itemize}
\end{remark}

\subsection{Rough sketch of a proof or Theorem \ref{2nd}}
Before delving into the intricate details of fully constructing the homotopy, we will present a short argument.
First, we know that the individual $P_n$s are homotopic to their core $I^{n-1}$s, abstractly. More concretely, by Theorem\ref{CactP_n}, we know that the cells are glued iteratively in $n-1$ steps, parameterized by the initial branching number. We obtain a retract $r:P_n\to I_n$,
by collapsing the cells in reverse order to the piece of the boundary
that is attached to the lower shell. 
 That is, first we look at
$$
\xymatrix{
I_n\ar@<3pt>@{^{(}->}^i[r]&\ar@<3pt>[l]^rP_n
}
$$
It is not hard to show that this is a deformation retract.
When the gluing maps are added, however it will be more 
convenient to realize that there is actually a map going the other way around. Although it is constructed a bit differently, the idea
is that if $V$ is the vertex set of $P_n$ and 
then $W=r(V)$ contains the vertex set of $I_n$ and additional points
in the boundary. Mapping back $W$ to $V$ linearly, gives a map the other way around. This map can be extended to the whole of $P_n$,
which is the sought after map $h_n$. It maps $I_n$ homeomorphically onto $P_n$ and is homotopic to the identity.

$$
\xymatrix{
P_n\ar@<3pt>^{\approx h_n}[r]&\ar@<3pt>[l]^{id}P_n
}
$$

On the cellular level, we contract all the cells that are not of the type $I^n$ and then obtain a complex which is isomorphic to $\CalF(n)$. 
\subsection{Explicit construction of the homotopy}
\label{homotopypar}
For the actual homotopy,
the idea is that one retracts the cells building up the $P_{\sigma}$ to the part of their boundary that is {\em not glued}. These cells are
given by
 \eqref{celleq}
as $P_{m_1}\times P_{m_2}\times \cdots \times P_{m_k}\times \Delta^k$ which, for concreteness, can be viewed as in $\mathbb{R}^{m_1-1}\times \mathbb{R}^{m_2-1}\times \cdots\times \mathbb{R}^{m_k-1}\times \mathbb{R}^k$. They are attached to cells of lower initial branching numbers along $P_{m_1}\times P_{m_2}\times \cdots \times P_{m_k}\times(\bigcup_{i=2}^kv_1\cdots \widehat{v_i}\cdots v_{k+1})\subset P_{m_1}\times P_{m_2}\times \cdots \times P_{m_k}\times \partial\Delta^k \subset \partial(P_{m_1}\times P_{m_2}\times \cdots \times P_{m_k}\times \Delta^k)$.

The basic homotopy is the following:
\begin{proposition}
\label{ProductRetract}

Let $n\geq 3$. For $m_1,\cdots,m_k\geq 1$, where $k\geq 2$ and $m_1+\cdots+m_k=n-1$, 
$\partial(P_{m_1}\times P_{m_2}\times \cdots \times P_{m_k}\times\Delta^k)\backslash \mathrm{Int}(P_{m_1}\times P_{m_2}\times \cdots \times P_{m_k}\times(\bigcup_{i=2}^kv_1\cdots \widehat{v_i}\cdots v_{k+1}))$ is a deformation retract of $P_{m_1}\times P_{m_2}\times \cdots \times P_{m_k}\times\Delta^k$.
\end{proposition}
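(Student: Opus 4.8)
The plan is to reduce the statement to an elementary fact about polytopes: removing the (relative) interior of a subcomplex $K$ of the boundary $\partial Q$ of a polytope $Q$ leaves a deformation retract of $Q$ as long as $K$ is itself homeomorphic to a disk $D^{\dim Q - 1}$, i.e. as long as $\partial Q \setminus \mathrm{Int}(K)$ is the ``complementary'' disk in the sphere $\partial Q \cong S^{\dim Q - 1}$. Concretely, write $Q = P_{m_1}\times\cdots\times P_{m_k}\times\Delta^k$, which is a PL $D^{n-1}$ since each factor is a polytope and $m_1+\cdots+m_k+(k-1) = (n-1-k)+(k-1)+1 = n-1$. Write $Q = A \times \Delta^k$ with $A = P_{m_1}\times\cdots\times P_{m_k}$, a PL $D^{n-1-k}$. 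The subset to be removed is $\mathrm{Int}\bigl(A\times \bigcup_{i=2}^k v_1\cdots\widehat{v_i}\cdots v_{k+1}\bigr)$.

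The first step is to identify $\partial Q$ and the distinguished subcomplex inside it. Since $\partial(A\times\Delta^k) = (\partial A\times\Delta^k)\cup(A\times\partial\Delta^k)$, and $\partial\Delta^k = \bigcup_{i=1}^{k+1} v_1\cdots\widehat{v_i}\cdots v_{k+1}$ is a PL $S^{k-1}$, the face $F := \bigcup_{i=2}^{k}v_1\cdots\widehat{v_i}\cdots v_{k+1}$ is a PL $D^{k-1}$ (the union of all but the two facets $v_1\cdots v_k$ and $v_2\cdots v_{k+1}$; these are exactly the two ``ends'' whose union with the rest sweeps out the sphere, as already observed in the proof of Theorem \ref{CactP_n}). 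Hence $A\times F$ is a PL $D^{n-2}$ sitting inside the PL sphere $\partial Q$, and its complementary region $\overline{\partial Q\setminus\mathrm{Int}(A\times F)}$ is the other PL $D^{n-2}$ — call it $G$. Concretely $G = (\partial A\times\Delta^k)\cup\bigl(A\times(v_1\cdots v_k\cup v_2\cdots v_{k+1})\bigr)$ union the closure of what remains; the point is simply that $G$ is a codimension-one disk inside the boundary sphere.

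The second step is the deformation retraction itself. Since $Q$ is a PL ball of dimension $n-1$ and $G\subset\partial Q$ is a PL ball of dimension $n-2$ whose frontier in $\partial Q$ is a PL $S^{n-3}$, the pair $(Q,G)$ is PL homeomorphic to the standard pair $(D^{n-1}, D^{n-2}_+)$ where $D^{n-2}_+$ is a closed hemisphere of $\partial D^{n-1} = S^{n-2}$. For the standard pair, the radial/``pushing across the equator'' deformation retraction of $D^{n-1}$ onto the closed upper hemisphere together with $\ldots$ — more simply: $D^{n-1}$ deformation retracts onto any $D^{n-2}\subset\partial D^{n-1}$, since one may first push the open complementary hemisphere out to the boundary of $D^{n-2}_+$ (a collapse of $D^{n-1}$ from a point of the open complementary hemisphere onto the rest of its boundary) and this is a deformation retraction fixing $G$. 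Transporting back through the PL homeomorphism gives the desired deformation retraction of $Q$ onto $\partial Q\setminus\mathrm{Int}(A\times F)$. One can also argue directly with a fixed interior point $p\in\mathrm{Int}(A\times F)$ and radial projection from $p$: every ray from $p$ meets $\partial Q$ either in $A\times F$ or in $G$, and projecting only the $G$-portion radially (leaving points already on $G$ fixed, and damping the homotopy to the identity near $\partial(A\times F)$) realizes the retraction explicitly; this is the version that will be compatible with the later gluing.

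The main obstacle I expect is not the topology — the retraction exists by the PL ball argument — but making the homotopy \emph{canonical and natural} enough to satisfy conditions $(*1)$–$(*3)$ of Remark \ref{homotopyrmk}, i.e. compatible with the face inclusions $P_{m_1}\times\cdots\times P_{m_k}\times\partial_i\Delta^k$ and with the gluings $\sim_{\mathcal F}$ and $\sim_{\mathcal C}$ on the proper faces. This forces a specific choice: one should use a product homotopy that is the identity on the $A = P_{m_1}\times\cdots\times P_{m_k}$ factor and a prescribed deformation retraction of $\Delta^k$-coordinates relative to $F$, chosen so that it restricts correctly on each $\partial_i\Delta^k$, $i\geq 2$ (these are exactly the glued facets) and so that iterating over decreasing initial branching number $k$ is consistent. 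Pinning down that choice and checking the restriction compatibilities is the genuinely delicate bookkeeping; the bare statement of Proposition \ref{ProductRetract} itself follows from the first two steps above and can be recorded with the explicit radial formula, deferring the compatibility verifications to where $H_\sigma$ is assembled.
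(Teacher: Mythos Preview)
Your proposal is correct and follows the same strategy as the paper: identify the pair $(Q,\,A\times F)$ with the standard pair $(D^{n-1},\,S^{n-2}_-)$ and transport back the elementary retraction of $D^{n-1}$ onto the upper hemisphere (the paper's Lemmas~\ref{BallRetract} and~\ref{ProductToBall} in the Appendix). The only difference is that the paper builds the identifying homeomorphism by hand via radial maps and stereographic projections rather than invoking a general PL standardness theorem, precisely so that the homotopy can be extended explicitly over the tab in Proposition~\ref{extended} --- the compatibility issue you correctly flag at the end.
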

\begin{proof}
A short argument is as follows.
Consider  $P_{m_1}\times P_{m_2}\times \cdots \times P_{m_k}\times\Delta^k$ as fibered  over $P_{m_1}\times P_{m_2}\times \cdots \times P_{m_k}\times(\bigcup_{i=2}^kv_1\cdots \widehat{v_i}\cdots v_{k+1})$. The  fibers are singletons along $\partial (P_{m_1}\times P_{m_2}\times \cdots \times P_{m_k}\times(\bigcup_{i=2}^kv_1\cdots \widehat{v_i}\cdots v_{k+1}))$ and the fibers over the points not in the previous set are closed intervals. Then we can contract $P_{m_1}\times P_{m_2}\times \cdots \times P_{m_k}\times\Delta^k$ onto $\partial(P_{m_1}\times P_{m_2}\times \cdots \times P_{m_k}\times\Delta^k)\backslash \mathrm{Int}(P_{m_1}\times P_{m_2}\times \cdots \times P_{m_k}\times(\bigcup_{i=2}^kv_1\cdots \widehat{v_i}\cdots v_{k+1}))$.
The full proof is in the  Appendix.
\end{proof}
\subsubsection{Extended products and extended homotopies}

These homotopies cannot be used directly, since we have to take care of the attaching maps. For this, we have to slightly thicken the cell and while retracting the interior of the cell to the boundary, ``pull'' the thickening into the interior.

Let $I_{\epsilon}$ be a closed line segment with small length $\epsilon$. Consider $(P_{m_1}\times \cdots \times P_{m_k}\times (\bigcup_{i=2}^kv_1\cdots\widehat{v_i}\cdots v_k))\times I_{\epsilon}$ to be embedded  into $\mathbb{R}^{n-1}$.
Let $\mathrm{Ext}_{\epsilon}(P_{m_1}\times \cdots \times P_{m_k}\times \Delta^k)$ be the union of $P_{m_1}\times \cdots \times P_{m_k}\times \Delta^k$, which we call the {\em basic cell} and  $(P_{m_1}\times \cdots \times P_{m_k}\times (\bigcup_{i=2}^kv_1\cdots\widehat{v_i}\cdots v_{k+1}))\times I_{\epsilon}$, which we call the {\em tab}, along
  $P_{m_1}\times \cdots \times P_{m_k}\times (\bigcup_{i=2}^kv_1\cdots\widehat{v_i}\cdots v_{k+1})$.

\begin{proposition}
\label{extended}
There is a homotopy $H_{\mathrm{Ext}_{\epsilon}(P_{m_1}\times \cdots \times P_{m_k}\times \Delta^k)}:\mathrm{Ext}_{\epsilon}(P_{m_1}\times \cdots \times P_{m_k}\times \Delta^k)\times I\rightarrow \mathrm{Ext}_{\epsilon}(P_{m_1}\times \cdots \times P_{m_k}\times \Delta^k)$, satisfying the following conditions:

\begin{enumerate}
\item it contracts $P_{m_1}\times P_{m_2}\times \cdots \times P_{m_k}\times\Delta^k$ onto $\partial(P_{m_1}\times P_{m_2}\times \cdots \times P_{m_k}\times\Delta^k)\backslash \mathrm{Int}(P_{m_1}\times P_{m_2}\times \cdots \times P_{m_k}\times(\bigcup_{i=2}^kv_1\cdots \widehat{v_i}\cdots v_{k+1}))$,

\item  it maps $(P_{m_1}\times \cdots \times P_{m_k}\times (\bigcup_{i=2}^kv_1\cdots\widehat{v_i}\cdots v_{k+1}))\times I_{\epsilon}$ homeomorphically to $\mathrm{Ext}_{\epsilon}(P_{m_1}\times \cdots \times P_{m_k}\times \Delta^k)$.

\item Let $X$ be space and $I'$ a closed subinterval of $I$, then we call a map $G:X\times I'\rightarrow X$ the identity homotopy on $X$ if $G(\cdot,t)=1_X$ for all $t\in I'$. Then $H_{\mathrm{Ext}_{\epsilon}(P_{m_1}\times \cdots \times P_{m_k}\times \Delta^k)}$ is the identity homotopy on $\partial \mathrm{Ext}_{\epsilon}(P_{m_1}\times \cdots \times P_{m_k}\times \Delta^k)$.
\end{enumerate}
\end{proposition}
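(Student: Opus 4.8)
The plan is to build the desired homotopy $H_{\mathrm{Ext}}$ in two stages, first on the basic cell and then extending over the tab. On the basic cell $P_{m_1}\times\cdots\times P_{m_k}\times\Delta^k$ I would use exactly the deformation retraction $R$ furnished by Proposition \ref{ProductRetract}, which contracts the cell onto $\partial(P_{m_1}\times\cdots\times P_{m_k}\times\Delta^k)\setminus\mathrm{Int}(P_{m_1}\times\cdots\times P_{m_k}\times(\bigcup_{i=2}^k v_1\cdots\widehat{v_i}\cdots v_{k+1}))$; call this target $\partial^{\flat}$ and call the "forbidden" open face $U=\mathrm{Int}(P_{m_1}\times\cdots\times P_{m_k}\times(\bigcup_{i=2}^k v_1\cdots\widehat{v_i}\cdots v_{k+1}))$. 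The content of that proposition, proved via the fibration-by-intervals picture, is that points of the basic cell flow along the closed-interval fibers toward $\partial^{\flat}$, so in particular every point in $\overline{U}$ eventually leaves $U$ through its relative boundary. The idea is then to use this flow to \emph{drag} the tab $\overline{U}\times I_{\epsilon}$ into the space: as a point $u\in\overline{U}$ is pushed out of $U$ along its fiber, the segment $\{u\}\times I_{\epsilon}$ is carried along and "unrolled" so that at time $1$ the whole product $\overline{U}\times I_\epsilon$ is laid down homeomorphically onto $\mathrm{Ext}_{\epsilon}(P_{m_1}\times\cdots\times P_{m_k}\times\Delta^k)$. Concretely one can parametrize: write a point of the tab as $(u,s)$ with $u\in\overline{U}$, $s\in I_\epsilon$, and define $H_{\mathrm{Ext}}((u,s),t)$ to be a point that at $t=0$ sits where $(u,s)$ was, and whose $\overline{U}$-component follows $R(u,\cdot)$ rescaled by a factor depending on $s$ (so that the "fully pushed out" endpoint of the segment reaches $\partial^{\flat}$ first and the base point $s=0$ reaches it last), while on the basic-cell side ($s=0$) it is just $R$. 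One should arrange the rescaling so that the union of the images over all $(u,s)$ at time $1$ is precisely $\mathrm{Ext}_\epsilon$, which gives (2), and so that (1) is literally $R$ restricted to the basic cell.

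The second task is to check continuity along the gluing locus $\overline{U}$ where the basic cell and the tab meet, and to verify condition (3), that $H_{\mathrm{Ext}}$ is the identity on $\partial\mathrm{Ext}_\epsilon$. For continuity, the two formulas (the $R$-flow on the basic cell, the dragged-segment flow on the tab) must agree on $\overline{U}\times\{0\}\subset$ basic cell $=\overline{U}\times\{0\}\subset$ tab; this forces the normalization of the rescaling at $s=0$ to be the identity reparametrization, which one simply builds in. For (3), note that $\partial\mathrm{Ext}_\epsilon$ consists of the faces of the basic cell that are \emph{not} in $U$ (these are part of $\partial^\flat$, hence fixed by $R$), together with the "outer" faces of the tab $\partial(\overline{U}\times I_\epsilon)\setminus(\overline U\times\{0\})$ which after unrolling correspond to the piece of $\partial\mathrm{Ext}_\epsilon$ not coming from the basic cell; one must define the drag so that these outer tab faces stay fixed throughout. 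Here it is cleanest to let only the \emph{open} interiors move: points of $U$ flow, points of $\overline U\setminus U$ do not, and the segment over a boundary point $u\in\overline U\setminus U$ stays put, while segments over interior points $u\in U$ get unrolled. Then $\partial\mathrm{Ext}_\epsilon$ is exactly the non-moving part.

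I expect the main obstacle to be purely technical bookkeeping rather than conceptual: writing down an explicit formula for the drag that (a) is continuous at the seam $\overline U$, (b) is a homeomorphism at $t=1$ onto all of $\mathrm{Ext}_\epsilon$ rather than merely a surjection or an embedding of a proper subset, and (c) is the stationary homotopy on $\partial\mathrm{Ext}_\epsilon$ — all three at once. The cleanest route is to fix a homeomorphism of $\mathrm{Ext}_\epsilon$ with the basic cell $P_{m_1}\times\cdots\times P_{m_k}\times\Delta^k$ itself (the tab is a "collar-like" thickening of the face $\overline U$, so such a homeomorphism exists, essentially by absorbing the collar), transport the problem across it, and then the required $H_{\mathrm{Ext}}$ becomes a mild modification of the retraction $R$ from Proposition \ref{ProductRetract}, with the collar coordinate used to interpolate the reparametrization. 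Since the statement explicitly defers the full verification to the Appendix, in the body I would give the fibration picture and the above description of the drag, state the three properties, and remark that the explicit formulas and the continuity/identity checks are carried out in the Appendix.
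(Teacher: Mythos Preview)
Your proposal is correct and follows essentially the same approach as the paper: use the fiber-wise retraction of Proposition~\ref{ProductRetract} on the basic cell and extend it over the tab so that at time~$1$ the tab is stretched homeomorphically over all of $\mathrm{Ext}_\epsilon$, while the outer boundary stays fixed. The paper's implementation differs only in that, rather than absorbing the collar into the basic cell, it extends the homeomorphism $F\circ b:P_{m_1}\times\cdots\times P_{m_k}\times\Delta^k\to D^{n-1}$ (from Lemma~\ref{ProductToBall}, sending $\overline U$ to $S^{n-2}_-$) to a homeomorphism $\mathrm{Ext}_\epsilon(P_{m_1}\times\cdots\times P_{m_k}\times\Delta^k)\to \mathrm{Ext}_\epsilon(D^{n-1}):=D^{n-1}\cup(S^{n-2}_-\times I_\epsilon)$ and writes the tab homotopy there as an explicit affine stretch of each vertical fiber; this is exactly your ``drag/unroll'' map in coordinates, and the three checks (continuity at the seam, homeomorphism at $t=1$, identity on $\partial\mathrm{Ext}_\epsilon$) become one-line computations in that model.
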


\begin{proof}
 See appendix.
\end{proof}

\subsubsection{Embedding the extensions}
Now we describe how to embed 
the extended products inside each $P_n$.

Let $n\geq 3$. We will do the construction for the $P_n$ corresponding the identity element $1_n=12\cdots n\in S_n$.  Once we have this, we can push it forward by $\sigma$ to obtain the construction for $P_n$ corresponding to $\sigma$.

Let $2\leq k\leq n-1$. For any $m_1,\cdots,m_k\geq 1$ with $m_1+\cdots+m_k=n-1$, we again first  consider the standard partition $\mathbf{j}=\mathbf{j}_1,\cdots,\mathbf{j}_k\in \mathrm{dSh}_{23\cdots n}[m_1,\cdots,m_k]$ where $\mathbf{l}_1=23\cdots (m_1+1)$ and $\mathbf{l}_i=(m_1+\cdots+m_{i-1}+2)\cdots (m_1+\cdots+m_i+1)$ for $i=2,\cdots,k$. Notice that the sequence $1\mathbf{j}_1\cdots\mathbf{j}_k$ is the sequence $12\cdots n$ for $1_n$. We know that $\mathrm{colim}_{\mathcal{T}_{1_n}[\mathbf{j}]}\mathcal{C}\cong P_{m_1}\times \cdots \times P_{m_k}\times \Delta^k$. Let $\mathrm{Ext}_{\phi_{m_1,\cdots,m_k}}(\mathrm{colim}_{\mathcal{T}_{1_n}[\mathbf{l}]}\mathcal{C})$ be the image in $P_n$ of $\mathrm{Ext}_{\epsilon}(P_{m_1}\times \cdots \times P_{m_k}\times \Delta^k)$ under an homeomorphism $\phi_{m_1,\cdots,m_k}$ first satisfying the following conditions. 
\begin{enumerate}
\item It is a homeomorphism onto its image.
\item It maps the basic cell to the corresponding cell in $P_n$
\item It maps the tab into the cells that the corresponding cell is attached to in the iteration.

\end{enumerate}
Maps like this exist in abundance, which is easily seen by regarding
a neighborhood of the common boundary.

Now for general $\sigma\in S_n$, and $\mathbf{l}=\mathbf{l}_1,\cdots,\mathbf{l}_k\in \mathrm{dSh}_{\sigma\backslash \sigma_1}[m_1,\cdots,m_k]$, let $\omega=\sigma_1\mathbf{l}_1\cdots\mathbf{l}_k\in S_n$. We let  $\mathrm{Ext}_{\phi_{m_1,\cdots,m_k}}(\mathrm{colim}_{\mathcal{T}_{\sigma}[\mathbf{l}]}\mathcal{C})$ be the image of $\mathrm{Ext}_{\phi_{m_1,\cdots,m_k}}(\mathrm{colim}_{\mathcal{T}_{1_n}[\mathbf{j}]}\mathcal{C})$ under the linear map, which permutes it into the right position.

$$\sum_{\nu\in S_n}t_{\nu}\mathcal{C}_{scc(\nu_1,\cdots,\nu_n)}\mapsto \sum_{\nu\in S_n}t_{\nu}\mathcal{C}_{scc((\omega\nu)_1,\cdots,(\omega\nu)_n)}.$$

Since for fixed $\sigma$ and $k$, any two from the collection of $\mathrm{colim}_{\mathcal{T}_{\sigma}[\mathbf{l}]}\mathcal{C}$ for all $m_1,\cdots,m_k$ and all $\mathbf{l}\in \mathrm{dSh}_{\sigma\backslash \sigma_1}[m_1,\cdots,m_k]$ either are disjoint or share a subspace homeomorphic to $D^i$, where $i$ is at most $n-3$, after possibly shrinking and perturbing the image of the tabs,
 we can choose the homeomorphisms $\phi_{m_1,\cdots,m_k}$ such that 
\begin{enumerate}
\setcounter{enumi}{3}
\item  
 the interiors of $\mathrm{Ext}_{\phi_{m_1,\cdots,m_k}}(\mathrm{colim}_{\mathcal{T}_{\sigma}[\mathbf{l}]}\mathcal{C})$ are pairwise disjoint for fixed $\sigma$ and $k$ and
 \item the same top dimensional cells in $P_n$ for different $\sigma$ have the same extended products. 
\end{enumerate}

\begin{notation}
Since for fixed $n$ and the homeomorphisms $\phi_{m_1,\cdots,m_k}$, $\mathrm{Ext}_{\phi_{m_1,\cdots,m_k}}(\mathrm{colim}_{\mathcal{T}_{\sigma}[\mathbf{l}]}\mathcal{C})$ only depends on $\mathbf{l}$, for simplicity, we denote it by $E_{\mathbf{l}}$.
For the example $n=3$, see Figure \ref{extendedfig}.

\begin{figure}
		\centering
		\includegraphics[width=150mm]{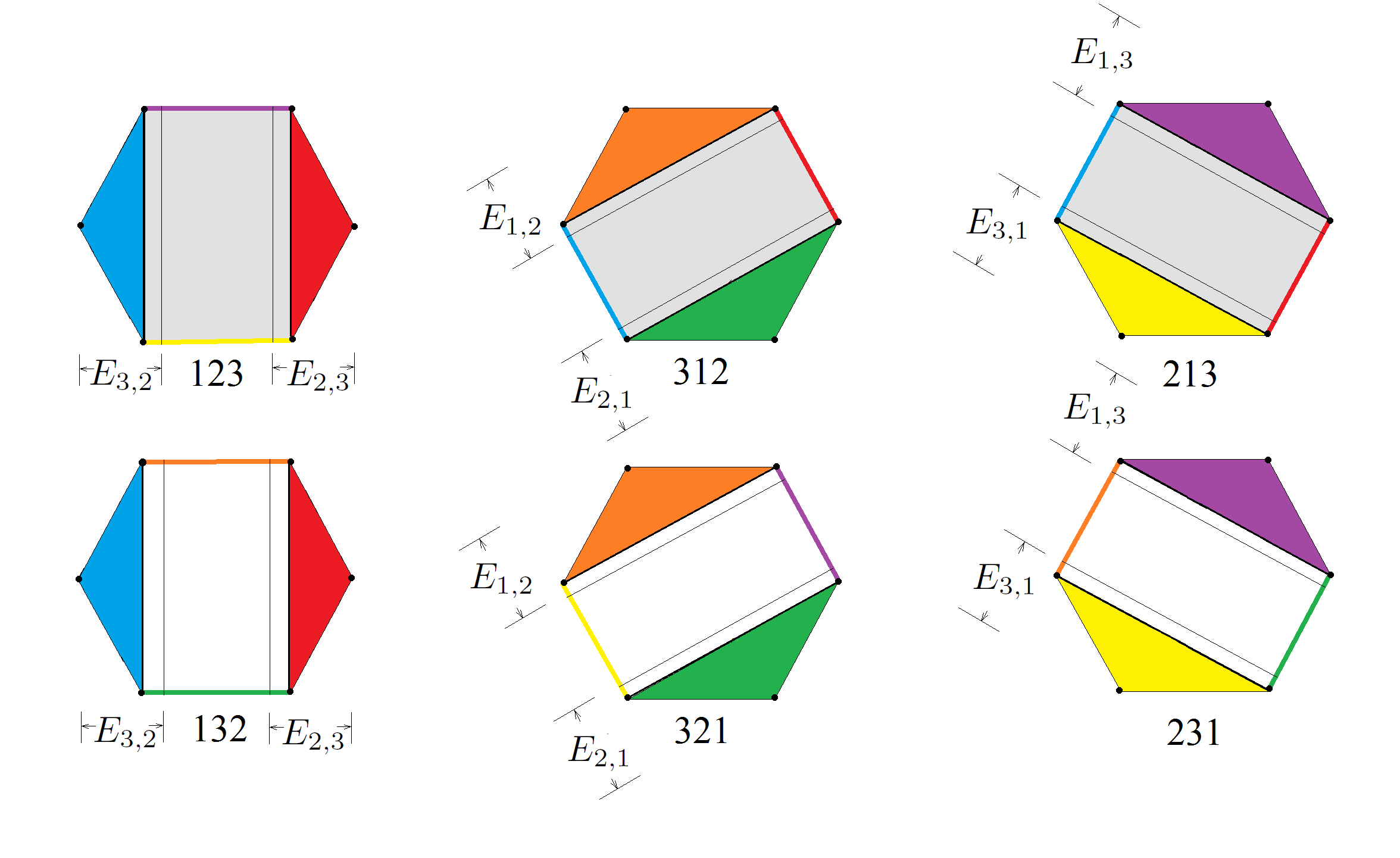}
		\caption{\label{extendedfig}$E_{\mathbf{l}}$ when $n=3$.}
\end{figure}

Under the homeomorphisms $\phi_{m_1,\cdots,m_k}$ and the linear maps, we transfer the homotopies $H_{\mathrm{Ext}_{\epsilon}(P_{m_1}\times \cdots\times P_{m_k}\times \Delta^k)}$ from $\mathrm{Ext}_{\epsilon}(P_{m_1}\times \cdots\times P_{m_k}\times \Delta^k)$ to each $E_{\mathbf{l}}$. We call this homotopy $H_{\mathbf{l}}:E_{\mathbf{l}}\times I\rightarrow E_{\mathbf{l}}$. \end{notation}

It is clear that these homotopies have  the three properties transferred from those in Proposition \ref{extended}.
\begin{corollary}
\label{simple}
The homotopies $H_{\mathbf{l}}:E_{\mathbf{l}}\times I\rightarrow E_{\mathbf{l}}$ satisfy the following conditions:

\begin{enumerate}
\item Choose $\sigma\in S_n$ such that $\mathbf{l}=\mathbf{l}_1,\cdots,\mathbf{l}_k\in \mathrm{dSh}_{\sigma\backslash \sigma_1}[m_1,\cdots,m_k]$. For each $i=1,2,\cdots,k-1$, let $$d_i\mathcal{T}_{\sigma}^{n-2}[\mathbf{l}]=\{B^+_{\sigma_1}(\tau_1,\cdots,\tau_i|\tau_{i+1},\cdots,\tau_k)| \tau_j\in \mathcal{T}^{m_j-1}_{\mathbf{l}_j}\}$$ and $$d\mathcal{T}^{n-2}_{\sigma}[\mathbf{l}]=\bigcup_{i=1}^{k-1}d_i\mathcal{T}_{\sigma}^{n-2}[\mathbf{l}].$$ Let $d\mathcal{T}_{\sigma}[\mathbf{l}]$ be the set of trees in $\mathcal{T}_{\sigma}$ such that each tree is smaller than or equal to an element in $d\mathcal{T}^{n-2}_{\sigma}[\mathbf{l}]$. Then $H_{\mathbf{l}}$ contracts $\mathrm{colim}_{\mathcal{T}_{\sigma}[\mathbf{l}]}\mathcal{C}$ onto $(\partial \mathrm{colim}_{\mathcal{T}_{\sigma}[\mathbf{l}]}\mathcal{C})\backslash \mathrm{Int(colim}_{d\mathcal{T}_{\sigma}[\mathbf{l}]}\mathcal{C})$;

\item it maps the closure of $E_{\mathbf{l}}\backslash \mathrm{colim}_{\mathcal{T}_{\sigma}[\mathbf{l}]}$ homeomorphically to $E_{\mathbf{l}}$;

\item $H_{\mathbf{l}}$ is the identity homotopy on $\partial E_{\mathbf{l}}$.
\end{enumerate}
\qed
\end{corollary}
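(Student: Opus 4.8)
The plan is to obtain all three assertions by transporting, across the defining homeomorphisms, the three properties of the model homotopy $H_{\mathrm{Ext}_{\epsilon}(P_{m_1}\times\cdots\times P_{m_k}\times\Delta^k)}$ furnished by Proposition \ref{extended}. Recall that $E_{\mathbf{l}}$ is by construction the image $\mathrm{Ext}_{\phi_{m_1,\cdots,m_k}}(\mathrm{colim}_{\mathcal{T}_{\sigma}[\mathbf{l}]}\mathcal{C})$ of $\mathrm{Ext}_{\epsilon}(P_{m_1}\times\cdots\times P_{m_k}\times\Delta^k)$ under $\phi_{m_1,\cdots,m_k}$ (a homeomorphism onto its image, for $\sigma=1_n$), postcomposed for general $\sigma$ with the linear ``permute into position'' map, and that $H_{\mathbf{l}}$ is by definition the conjugate of $H_{\mathrm{Ext}_{\epsilon}(\cdots)}$ by this same homeomorphism. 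Since conjugation by a homeomorphism sends a deformation retraction onto a subspace (resp.\ an identity homotopy on a subspace, resp.\ a homeomorphism of one subspace onto another) to the same kind of object on the images, the only real content is to match the three relevant subspaces in the model with the three subspaces named in the statement.

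First I would invoke the defining properties of $\phi_{m_1,\cdots,m_k}$: it carries the \emph{basic cell} $P_{m_1}\times\cdots\times P_{m_k}\times\Delta^k$ onto the corresponding cell $\mathrm{colim}_{\mathcal{T}_{\sigma}[\mathbf{l}]}\mathcal{C}$ of $P_n$ via \eqref{celleq}, and the \emph{tab} $\bigl(P_{m_1}\times\cdots\times P_{m_k}\times(\bigcup_{i=2}^{k}v_1\cdots\widehat{v_i}\cdots v_{k+1})\bigr)\times I_{\epsilon}$ onto the closure of $E_{\mathbf{l}}\setminus\mathrm{colim}_{\mathcal{T}_{\sigma}[\mathbf{l}]}\mathcal{C}$ --- the tab being, by construction, the closure of $\mathrm{Ext}_{\epsilon}(\cdots)$ minus the basic cell. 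The remaining point is to identify $P_{m_1}\times\cdots\times P_{m_k}\times(\bigcup_{i=2}^{k}v_1\cdots\widehat{v_i}\cdots v_{k+1})$, sitting inside $\mathrm{colim}_{\mathcal{T}_{\sigma}[\mathbf{l}]}\mathcal{C}$, with $\mathrm{colim}_{d\mathcal{T}_{\sigma}[\mathbf{l}]}\mathcal{C}$. This is exactly the face along which the cell was attached to the lower filtration stage $\mathrm{colim}_{\mathcal{T}_{\sigma,k-1}}C$ in the proof of Theorem \ref{CactP_n}; under the dictionary used there --- ``the facet $P_{m_1}\times\cdots\times P_{m_k}\times\partial_i\Delta^k$ corresponds to contracting the $i$-th arc on the root lobe of $B^+_{\sigma_1}(\tau_1,\dots,\tau_k)$'', i.e.\ to the angle collapse merging two adjacent incoming branches --- the facet obtained by deleting $v_i$ has index set exactly $d_{i-1}\mathcal{T}^{n-2}_{\sigma}[\mathbf{l}]$ together with its $\precT$-down-closure, and the union over $i=2,\dots,k$ gives $d\mathcal{T}_{\sigma}[\mathbf{l}]$. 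With this dictionary in hand, property (1) of Proposition \ref{extended} transports to (1), property (2) to (2), and property (3) (the identity homotopy on $\partial\mathrm{Ext}_{\epsilon}(\cdots)$, which $\phi_{m_1,\cdots,m_k}$ carries to $\partial E_{\mathbf{l}}$) to (3). For general $\sigma$ the extra linear permutation map is a linear homeomorphism carrying cells to cells and the identity homotopy to itself, so nothing changes, and by property (5) of the chosen $\phi$'s the resulting $E_{\mathbf{l}}$ and $H_{\mathbf{l}}$ do not depend on which compatible $\sigma$ was picked.

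The main --- and essentially only non-formal --- obstacle I expect is the index bookkeeping in that last identification: matching the $k-1$ facets $\partial_i\Delta^k$, $i=2,\dots,k$, with the sets $d_i\mathcal{T}^{n-2}_{\sigma}[\mathbf{l}]$, $i=1,\dots,k-1$, which amounts to tracking which arc of the root lobe lies between the $j$-th and $(j+1)$-th incoming branches and checking that collapsing it yields $B^+_{\sigma_1}(\tau_1,\dots,\tau_j|\tau_{j+1},\dots,\tau_k)$. This is forced by the conventions already fixed in the proof of Theorem \ref{CactP_n} and in \S\ref{Bpmpar}, so it is routine; everything else is the purely formal remark that conjugation by a homeomorphism preserves deformation retractions, identity homotopies on subspaces, and homeomorphisms between subspaces.
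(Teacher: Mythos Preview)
Your proposal is correct and takes essentially the same approach as the paper: the paper simply remarks that ``it is clear that these homotopies have the three properties transferred from those in Proposition~\ref{extended}'' and ends with a \qed, so your write-up is in fact a more detailed version of what the paper leaves implicit. The only substantive content, as you correctly identify, is the dictionary matching the inner facets $\partial_i\Delta^k$ ($i=2,\dots,k$) with the tree-indexed sets $d_{i-1}\mathcal{T}^{n-2}_{\sigma}[\mathbf{l}]$, which is exactly the attaching description used in the proof of Theorem~\ref{CactP_n}.
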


\subsubsection{Iterated cone construction}

Starting from $n=4$, we have to take care of the boundaries. For this we will use the so--called iterated cones, which we now define. 

Let $n\geq 4$ and $1_n=12\cdots n\in S_n$. Notice that $\mathrm{colim}_{\mathcal{T}_{1_n,1}}\mathcal{C}$, as a subspace of $\mathcal{F}_{1_n}=\mathrm{colim}_{\mathcal{T}_{1_n}}\mathcal{C}=P_n$, is the cartesian product of $P_{n-1}$ with $I_{\sqrt{n(n-1)}}$, where $I_{\sqrt{n(n-1)}}$ is the interval $[0,\sqrt{n(n-1)}]$. We construct the iterated cones of the faces of $P_n$ of dimension $i:2\leq i\leq n-2$, where $P_n$ is seen as the realization of $1_n$ under $\mathcal{F}$, as follows. We then transfer the cones to all the $P_\sigma$ symmetrically by using the $S_n$ action.
The cones are specified, by giving their cone vertex which will be a point inside $P_n$.  

There is a choice for such cone vertices. We will choose these vertices ``as close to the base as needed'' and the cones at each step are mutually disjoint. This is technically done by requiring that the line determined by the vertex $v$ and the geometric center of the base is perpendicular to the face and the distance from $v$ to the geometric center is small. Since there are only finitely many cones in each cactus cell it follows that this is possible. The distance and perpendicularity do not fix $v$ uniquely starting at codimension 2. In that case, we will choose $v$ such that the entire cone lies {\em inside} a union of top dimensional cactus cells with maximal possible initial branching number $k$.

\textbf{Step $1$.}  For any dimension $2$ face $\mathcal{F}_{\mathbf{l}_1|\cdots|\mathbf{l}_{n-2}}$ of $P_n$, let $v$ be a point in the interior of $P_n$ with distance $\epsilon_2$ to the geometric center of $\mathcal{F}_{\mathbf{l}_1|\cdots|\mathbf{l}_{n-2}}$. We choose this $v$ as explained above and such that $v$ is not in $\mathrm{colim}_{\mathcal{T}_{1_n,1}}\mathcal{C}$.  Then we form the join $\mathcal{F}_{\mathbf{l}_1|\cdots|\mathbf{l}_{n-2}}*v$, which is a cone with base $\mathcal{F}_{\mathbf{l}_1|\cdots|\mathbf{l}_{n-2}}$. We call such a cone $C^1(\mathcal{F}_{\mathbf{l}_1|\cdots|\mathbf{l}_{n-2}})$. These cones are $3$-dimensional.

\textbf{Step $i-1$, $3\leq i\leq n-3$.} For any dimension $i$ face $\mathcal{F}_{\mathbf{l}_1|\cdots|\mathbf{l}_{n-i}}$ of $P_n$, we consider its union with the $(i-2)$-cones of its codimension $1$ faces: $$\mathcal{F}_{\mathbf{l}_1|\cdots|\mathbf{l}_{n-i}}\cup\bigcup_{\mathbf{k}\in \mathcal{J}_{1_n}^{i-1},\mathbf{k}<\mathbf{l}_1|\cdots|\mathbf{l}_{n-i}}C^{i-2}(\mathcal{F}_{\mathbf{k}}).$$ Let $v$ be a point in the interior of $P_n$ with distance $\epsilon_i$ below the face $\mathcal{F}_{\mathbf{l}_1|\cdots|\mathbf{l}_{n-i}}$ as explained above.
If necessary, we move the previous cone vertices, so that the line segments from $v$ to any point in the union do not contain any other point. Again this is possible, since there are only finitely many cones and we can vary the distance and the position of the cone points for lower dimensions.

 Then we form the join of $v$ with this union and denote it by $C^{i-1}(\mathcal{F}_{\mathbf{l}_1|\cdots|\mathbf{l}_{n-i}})$ and call it the $(i-1)$-cone of $\mathcal{F}_{\mathbf{l}_1|\cdots|\mathbf{l}_{n-i}}$.
 Its dimension is $(i+1)$.

\textbf{Step $n-3$.} For any dimension $n-2$ (codimension $1$) face $\mathcal{F}_{\mathbf{l}_1|\mathbf{l}_2}$ of $P_n$, we form the union $$B(\mathbf{l}_1|\mathbf{l}_{2}):=\mathcal{F}_{\mathbf{l}_1|\mathbf{l}_{2}}\cup\bigcup_{\mathbf{k}\in \mathcal{J}_{1_n}^{n-3},\mathbf{k}<\mathbf{l}_1|\mathbf{l}_{2}}C^{n-4}(\mathcal{F}_{\mathbf{k}}).$$

For the special elements $1|2\cdots n$ and $2\cdots n|1$, we let $Cyl(\mathcal{F}_{1|2\cdots n})=Cyl(\mathcal{F}_{2\cdots n|1})$ be the space as the union of line segments such that each line segment joins a point in $B(1|2\cdots n)$ to the corresponding point in $B(2\cdots n|1)$. We call it the cylinder of $\mathcal{F}_{1|2\cdots n}$ (or of $\mathcal{F}_{2\cdots n|1}$).

For general $\mathbf{l}_1|\mathbf{l}_2$ (including $1|2\cdots n$ and $2\cdots n|1$), let $v$ be a point in the interior of $P_n$ with distance $\epsilon_{n-2}$ directly below the geometric center of $\mathcal{F}_{\mathbf{l}_1|\mathbf{l}_2}$ such that if $\mathbf{l}_1|\mathbf{l}_2$ is neither $1|2\cdots n$ nor $2\cdots n|1$, $v$ is not in the interior of $Cyl(\mathcal{F}_{1|2\cdots n})$. We form the join $B(\mathbf{l}_1|\mathbf{l}_2)*v$ and denote it by $C^{n-3}(\mathcal{F}_{\mathbf{l}_1|\mathbf{l}_2})$. We call it the $(n-3)$-cone of $\mathcal{F}_{\mathbf{l}_1|\mathbf{l}_2}$.\\

We choose the values of $\epsilon_2,\cdots,\epsilon_{n-2}$ small enough and the orientation of each $v$ such that
\begin{enumerate}
\item all the previous conditions are satisfied;
\item each iterated cone is contained in the union of the elements of a collection $\{\mathcal{C}_{\mathbf{l}_l}\}$ such that each $\mathbf{l}_l\in \mathcal{T}^{n-1}_{1_n}$ has the largest possible arity number;
\item the union of all $(n-3)$-cones exhibits maximal symmetry.
\end{enumerate}

To construct the iterated cones for $P_n$ as $\mathcal{F}_{\sigma}$ for any $\sigma\in S_n$, we take as images of the iterated cones of $\mathcal{F}_{1_n}$ under the linear map $$\sum_{\nu\in S_n}t_{\nu}\mathcal{F}_{\nu_1|\cdots|\nu_n}\mapsto \sum_{\nu\in S_n}t_{\nu}\mathcal{F}_{(\sigma\nu)_1|\cdots|(\sigma\nu)_n}.$$

\subsubsection{Transferring homotopies from faces to cones}

We will use the following observation to transfer the previously defined homotopies as homotopies on faces into the ambient $P_n$ by inducing a homotopy on  a cone over the face  with vertex $v$ which lies inside $P_n$.

Let $P$ be a polytope and $H_P:P\times I\rightarrow P$ a homotopy with $H_P(\cdot,t)|_{\partial P}=1_{\partial P}$ for all $t\in I$. 
We define $H_{P\times I}:(P\times I)\times I\rightarrow P\times I$ by $((x,s),t)\mapsto (H_P(x,t),s)$. So $H_{P\times I}(\cdot,\cdot,t)|_{(\partial P)\times I}=1_{(\partial P)\times I}$ for all $t\in I$. Thus, if we  identify $P\times I/P\times \{1\}$ with a cone $P*v$,
the homotopy $H_{P\times I}$ induces a homotopy on $P\times I/P\times \{1\}\approx P*v$, which we call it $H_{P*v}$.  This homotopy then satisfies.

\begin{equation}
\label{boundary}
H_{P*v}(\cdot,t)|_{(\partial P)*v}=1_{(\partial P)*v}
\end{equation}

\subsubsection{The homotopy}
\label{homotopy}

Now we describe the homotopies $H_{\sigma}$ where $\sigma \in S_n$ for some $n\geq 1$.

For $n=1,2$, we let $H_{\sigma}$ be the identity homotopies. In fact, $\mathcal{C}(n)\approx\mathcal{F}(n)$ in these two cases. For $n=3$, we let $H_{\sigma}(\cdot,t)$ be $H_{\sigma_2,\sigma_3}(\cdot,t)$ on $E_{\sigma_2,\sigma_3}$ and $H_{\sigma_3,\sigma_2}(\cdot,t)$ on $E_{\sigma_3,\sigma_2}$; we let $H_{\sigma}$ be the identity homotopy on $P_3
\backslash \mathrm{Int}(E_{\sigma_2,\sigma_3}\cup E_{\sigma_3,\sigma_2})$. By Corollary \ref{simple}, $H_{\sigma}$ is a well-defined homotopy on $P_3$. See Figure \ref{fig:h3}

\begin{figure}
		\centering
		\includegraphics[width=150mm]{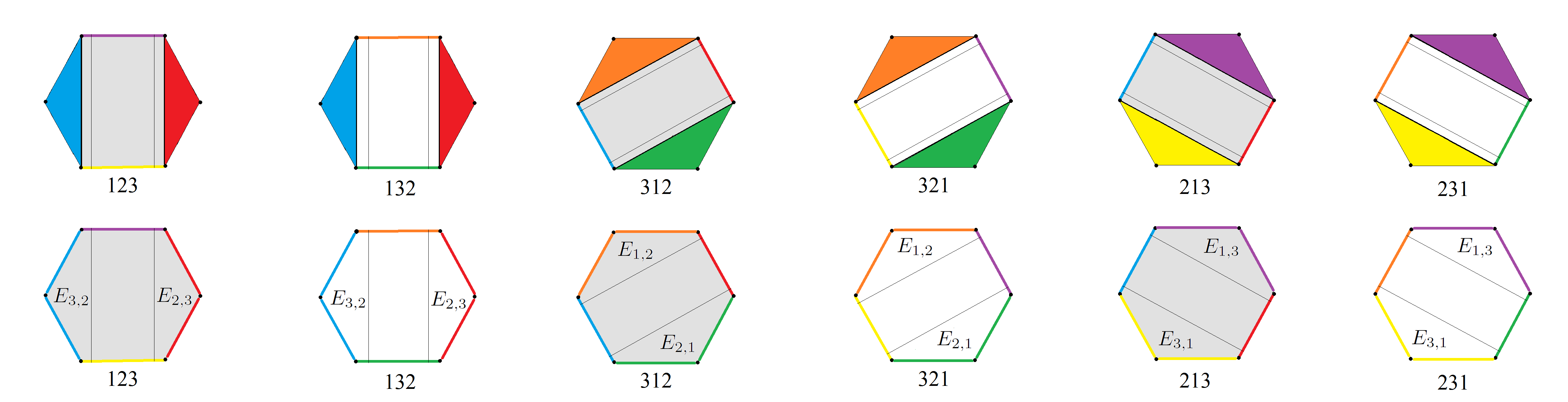}
		\caption{\label{h3fig} Top row: the domains of $h_{\sigma}=H_{\sigma}(\cdot,1)$; Bottom row: the images of $h_{\sigma}=H_{\sigma}(\cdot,1)$.}
		\label{fig:h3}
\end{figure}

For each $n\geq 4$, we describe $H_{\sigma}$ in $n$ steps, assuming we know the homotopies for all $m<n$. 

In the first $n-1$ steps the homotopy is applied according to the initial
branching number starting with $k=1$ and ending with $k=n-1$. After
that, in the last step (Step $n$), the homotopy is done on the faces.The homotopy on the faces
uses the iterated cones of degree $\leq n-3$ while in Steps $2,\dots, n-1$, we use the extensions. The first step is more complicated
since part of the boundary of the cells that we are moving is connected to other cells of other $P_{\sigma}$. So in this step, there is an additional use of iterated cones.

In the steps with iterated cones, the homotopy is done in several substeps. For this we need the following technical details:
 For $\mathbf{l}=\mathbf{l}_1|\cdots|\mathbf{l}_k\in \mathcal{J}_{\sigma}$, let $l_i$ be the length of the string $\mathbf{l}_i$ and $l$ the maximum of $\{l_i\big| i=1,\cdots,k\}$.  The homotopies over the cones of the faces will be performed inductively over $l$.

Correspondingly, we subdivide the interval $I$ in such a way that the homotopy can take place at different times in $I$.
Let $f^m_0:I\rightarrow I$ be defined by $t\mapsto \frac{1}{m}t$ and $f^m_1:I\rightarrow I$ by $t\mapsto \frac{1}{m}(t+m-1)$, where $m\geq 4$. Let $f^m_{0}:I\rightarrow I$ and $f^m_1:I\rightarrow I$ be the identity maps if $m=3,2,1$. Then we let $f_{i_1i_2\cdots i_k}=f^n_{i_1}\circ f^{n-1}_{i_2} \circ \cdots \circ f^{n-k+1}_{i_k}$ for $1\leq k\leq n$, where $i_j=0,1$, $j=1,\cdots,k$. We also let $I_{i_1i_2\cdots i_k}=f_{i_1i_2\cdots i_k}(I)$. For an example, see Figure \ref{Ifig}.
  From now on,  $i_1=0$ corresponds to step 1 while $i_1=1$ corresponds to step $n$. 

 \begin{figure}
		\centering
		\includegraphics[width=150mm]{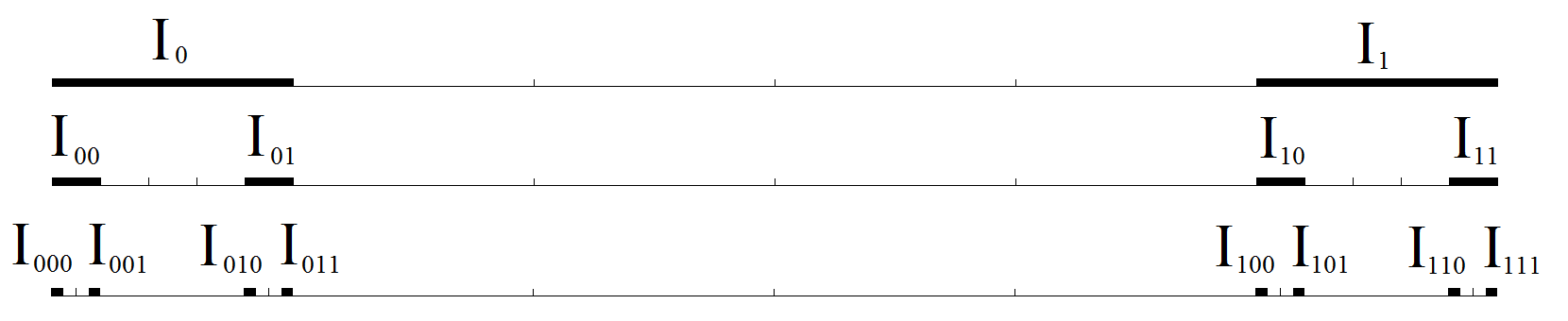}
		\caption{\label{Ifig}$I_{i_1\cdots i_k}$ when $n=6$. Notice that $I_{i_1i_2i_3i_4i_5i_6}=I_{i_1i_2i_3}$.}
\end{figure}

\textbf{Step $1$: } $t\in [0,\frac{1}{n}]$, $i_0=0$.

For $\mathbf{l}=\mathbf{l}_1|\cdots|\mathbf{l}_{n-2}\in \mathcal{J}_{\sigma}$, if $l=2$, then all but two of $\mathbf{l}_1,\cdots,\mathbf{l}_{n-2}$ are of length $1$ and we let $H_{C^1(\mathcal{F}_{\mathbf{l}})}:C^1(\mathcal{F}_{\mathbf{l}})\times [0,\frac{1}{n}]\rightarrow C^1(\mathcal{F}_{\mathbf{l}})$ be the identity homotopy. If $l=3$, then $\mathbf{l}=\cdots|\mathbf{l}_l|\cdots$ where $\mathbf{l}_l=ijk$ and all the other $\mathbf{l}_m$s are of length $1$. 
We define $H_{\mathcal{F}_{\mathbf{l}}}:\mathcal{F}_{\mathbf{l}}\times [0,\frac{1}{n}]\rightarrow \mathcal{F}_{\mathbf{l}}$ by $$(x,t)\mapsto \psi_\mathbf{l}^{-1}\left(*\times \cdots \times *\times H_{\mathcal{F}_{123}}(\psi_{\mathbf{l}_l}(x),f^{-1}_{i_1\cdots i_{n-3}}(t))\times *\times \cdots\times *\right)$$
 if $t\in I_{i_1i_2\cdots i_{n-3}}$ for some $i_1,\cdots,i_{n-3}$ and $(x,t)\mapsto x$ otherwise, where $\psi_{\mathbf{l}}=(*,\cdots,*,\psi_{\mathbf{l}_l},*,\cdots,*):\mathcal{F}_{\mathbf{l}}\rightarrow *\times\cdots \times *\times \mathcal{F}_{123}\times *\times \cdots \times *$ is the homeomorphism defined under $i\mapsto 1$, $j\mapsto 2$ and $k\mapsto 3$. Then we get the induced homotopy $H_{C^1(\mathcal{F}_{\mathbf{l}})}:C^1(\mathcal{F}_{\mathbf{l}})\times [0,\frac{1}{n}]\rightarrow C^1(\mathcal{F}_{\mathbf{l}})$.

Let $3\leq i\leq n-3$. Suppose we have described the homotopies for $t\in [0,\frac{1}{n}]$ on the $(i-2)$-cones. For $\mathbf{l}=\mathbf{l}_1|\cdots|\mathbf{l}_{n-i}$, define $H_{\mathcal{F}_{\mathbf{l}}}:\mathcal{F}_{\mathbf{l}}\times [0,\frac{1}{n}]\rightarrow \mathcal{F}_{\mathbf{l}}$ by $$(x,t)\mapsto \psi_{\mathbf{l}}^{-1}\left(H_{\mathcal{F}_{12\cdots l_1}}(\psi_{\mathbf{l}_1}(x),f^{-1}_{i_1i_2\cdots i_{n-l_1}}(t)),\cdots, H_{\mathcal{F}_{12\cdots l_{n-i}}}(\psi_{\mathbf{l}_{n-i}}(x),f^{-1}_{i_1i_2\cdots i_{n-l_{n-i}}}(t))\right),$$ where  $\psi_{\mathbf{l}}=(\psi_{\mathbf{l}_1},\cdots,\psi_{\mathbf{l}_{n-i}}):\mathcal{F}_{\mathbf{l}}\rightarrow \mathcal{F}_{12\cdots l_1}\times \cdots\times \mathcal{F}_{12\cdots l_{n-i}}$ is the homeomorphism under the assignments $(\mathbf{l}_j)_k\mapsto k$, for $k=1,2,\cdots,l_j$ and $j=1,2,\cdots,n-i$, and we define $H_{\mathcal{F}_{12\cdots l_j}}(\psi_{\mathbf{l}_j}(x),f^{-1}_{i_1i_2\cdots i_{n-l_j}}(t))$ to be $\psi_{\mathbf{l}_j}(x)$ if $f^{-1}_{i_1\cdots i_{n-l_j}}(t)=\emptyset$. The homotopies $H_{\mathcal{F}_{\mathbf{l}}}$ and $H_{C^{i-1}(\mathcal{F}_{\mathbf{k}})}$, where $\mathbf{k}\in \mathcal{J}^{i-1}_{\sigma}$ and $\mathbf{k}<\mathbf{l}$, agree on their overlaps.\\

Thus, we get a well-defined homotopy on $\mathcal{F}_{\mathbf{l}_1|\cdots|\mathbf{l}_{n-i}}\cup\bigcup_{\mathbf{k}\in \mathcal{J}_{1_n}^{i-1},\mathbf{k}<\mathbf{l}_1|\cdots|\mathbf{l}_{n-i}}C^{i-2}(\mathcal{F}_{\mathbf{k}})$, which induces the homotopy $H_{C^{i-1}(\mathcal{F}_{\mathbf{l}})}:C^{i-1}(\mathcal{F}_{\mathbf{l}})\times [0,\frac{1}{n}]\rightarrow C^{i-1}(\mathcal{F}_{\mathbf{l}})$.\\

Lastly, for $\mathbf{l}=\mathbf{l}_1|\mathbf{l}_2$, define $H_{\mathcal{F}_{\mathbf{l}}}:\mathcal{F}_{\mathbf{l}}\times [0,\frac{1}{n}]\rightarrow \mathcal{F}_{\mathbf{l}}$ by $$(x,t)\mapsto \psi^{-1}_{\mathbf{l}}\left(H_{\mathcal{F}_{12\cdots l_1}}(\psi_{\mathbf{l}_1}(x),f^{-1}_{i_1i_2\cdots i_{n-l_1}}(t)),H_{\mathcal{F}_{12\cdots l_2}}(\psi_{\mathbf{l}_2}(x),f^{-1}_{i_1i_2\cdots i_{n-l_2}}(t))\right)$$ as above. Again, we get well-defined 
homotopy on $$B(\mathbf{l}_1|\mathbf{l}_2)=\mathcal{F}_{\mathbf{l}_1|\mathbf{l}_{2}}\cup\bigcup_{\mathbf{k}\in \mathcal{J}_{1_n}^{n-3},\mathbf{k}<\mathbf{l}_1|\mathbf{l}_{2}}C^{n-4}(\mathcal{F}_{\mathbf{k}}).$$ Then we get induced homotopies $H_{C^{n-3}(\mathcal{F}_{\mathbf{l}})}:C^{n-3}(\mathcal{F}_{\mathbf{l}})\times [0,\frac{1}{n}]\rightarrow C^{n-3}(\mathcal{F}_{\mathbf{l}})$ where $\mathbf{l}$ is neither $1|2\cdots n$ nor $2\cdots n|1$. Let $\theta$ be the homeomorphism $\theta=(\theta_1,\theta_2):Cyl(\mathcal{F}_{1|2\cdots n})\rightarrow B(1|2\cdots n)\times I$. Then we also have the homotopy $H_{Cyl(\mathcal{F}_{1|2\cdots n})}:Cyl(\mathcal{F}_{1|2\cdots n})\times [0,\frac{1}{n}]\rightarrow Cyl(\mathcal{F}_{1|2\cdots n})$ defined by $(x,t)\mapsto \theta^{-1}\left(H_{B(1|2\cdots n)}(\theta_1(x),t),\theta_2(x)\right)$. The homotopies $H_{Cyl(\mathcal{F}_{1|2\cdots n})}$ and the $H_{C^{n-3}(\mathcal{F}_{\mathbf{l}})}$s agree on 
their overlaps. On the other hand, we let $H_{\sigma}:P_n\times [0,\frac{1}{n}]\rightarrow P_n$ be the identity homotopy on the complement of the interior of \\$Cyl(\mathcal{F}_{1|2\cdots n})\cup\bigcup_{\mathbf{l}\in \mathcal{J}^{n-2}_{\sigma}\backslash\{1|2\cdots n,2\cdots n|1\}}C^{n-3}(\mathcal{F}_{\mathbf{l}})$. By  \eqref{boundary}, $H_{\sigma}:P_n\times [0,\frac{1}{n}]\rightarrow P_n$ is a well-defined homotopy.

\textbf{Step $j$, $j=2,\cdots,n-1$}: $t\in [\frac{j-1}{n},\frac{j}{n}]$. Let $H_{\sigma}(\cdot,t)$ be $H_{\mathbf{l}}(\cdot,nt-j+1)$ on $E_{\mathbf{l}}$, where $\mathbf{l}=\mathbf{l}_1,\cdots,\mathbf{l}_j$ is compatible with $\sigma$ (recall this means each $\mathbf{l}_i$ is a subsequence of $\sigma_2\cdots\sigma_n$), and let $H_{\sigma}(\cdot,t)$ be the identity homotopy on the complement of the interior of the union of these $E_{\mathbf{l}}$s in $P_n$. By Corollary \ref{simple}, each $H_{\sigma}:P_n\times [\frac{j-1}{n},\frac{j}{n}]\rightarrow P_n$ is a well-defined homotopy.\\

\textbf{Step $n$:} $t\in [\frac{n-1}{n},1]$. We get the induced homotopies $H_{C^{n-3}(\mathcal{F}_{\mathbf{l}})}:C^{n-3}(\mathcal{F}_{\mathbf{l}})\times [\frac{n-1}{n},1]\rightarrow C^{n-3}(\mathcal{F}_{\mathbf{l}})$ for all $\mathbf{l}\in \mathcal{J}^{n-2}_{\sigma}$ as those in the Step 1 except that we let $i_1=1$ and we don't consider the cylinders. On the other hand, we let $H_{\sigma}:P_n\times [\frac{n-1}{n},1]\rightarrow P_n$ be the identity homotopy on the complement of the interior of $\bigcup_{\mathbf{l}\in \mathcal{J}^{n-2}_{\sigma}}C^{n-3}(\mathcal{F}_{\mathbf{l}})$. By \eqref{boundary}, $H_{\sigma}:P_n\times [\frac{n-1}{n},1]\rightarrow P_n$ is a well-defined homotopy.\\

The above $n$ homotopies agree on their overlaps, thus we get a well-defined homotopy $H_{\sigma}:P_n\times I\rightarrow P_n$.

\begin{proposition}
\label{homotopyprop}
For any $n\geq 1$, the homotopies $H_{\sigma}$ satisfy the conditions (*1),(*2) and (*3) of Remark \ref{homotopyrmk} and hence induce the 
homotopies $1_{\mathcal{F}(n)}\simeq_{H_{\mathcal{F}}}\overline{h}_n\circ \overline{1}_n$ and $1_{\mathcal{C}(n)}\simeq_{H_{\mathcal{C}}}\overline{1}_n\circ \overline{h}_n$ detailed in Remark \ref{homotopyrmk}.
\end{proposition}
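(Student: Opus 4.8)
The plan is to verify conditions $(*1)$, $(*2)$, $(*3a)$ and $(*3b)$ one at a time, exploiting that the homotopy $H_\sigma$ has been assembled from pieces that are either identity homotopies or are obtained by transport of the standard homotopies $H_{\mathrm{Ext}_\epsilon(\cdots)}$ of Proposition \ref{extended} and their cone versions via \eqref{boundary}. Condition $(*1)$ is essentially immediate: in each of the $n$ steps the homotopy at time $t=\frac{j-1}{n}$ is the identity on the relevant cells (the $f^m_i$ reparametrizations send the left endpoint to the left endpoint, Proposition \ref{extended}(3) and \eqref{boundary} give the identity on boundaries, and outside the active cells $H_\sigma$ is declared to be the identity), so the composite over $I$ starts at $1_\sigma$. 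I would state this and move on.

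For $(*2)$ (compatibility with $\sim_{\mathcal F}$, i.e.\ the face gluings of $P_n$), the key point is that everything we do is local to the cactus-cell decomposition of a single $P_\sigma$ and respects proper faces. Concretely: the iterated cones $C^{i}(\mathcal F_{\mathbf k})$ over faces were built so that the homotopy restricted to a proper face $\mathcal F_{\mathbf l}$ of $P_\sigma$ agrees with the already-defined homotopy on that face (the ``agree on overlaps'' clauses in Step $1$ and the product formula for $H_{\mathcal F_{\mathbf l}}$), and by construction the cones were transported to all $P_\sigma$ symmetrically via the $S_n$-action. Since two copies $P_\sigma$, $P_\nu$ are glued in $\mathcal F(n)$ along a common face $\mathcal F_{\mathbf a}$, $\mathbf a\in\mathcal J_\sigma\cap\mathcal J_\nu$, and the restriction of $H_\sigma$ to that face depends only on $\mathbf a$ (being built from $H_{\mathcal F_{12\cdots l_1}}\times\cdots$ after the canonical relabelling $\psi_{\mathbf l}$), we get $H_\sigma(x,t)\sim_{\mathcal F}H_\nu(y,t)$ whenever $x\sim_{\mathcal F}y$. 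I would phrase this as an induction on $n$: the restriction of $H_\sigma$ to a codimension-$\geq 1$ face is a product of lower-dimensional $H$'s, which satisfy $(*2)$ by the inductive hypothesis, and the extension/cone constructions do not create new identifications since they lie in the interior of $P_n$.

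For $(*3a)$ (compatibility with $\sim_{\mathcal C}$, the cactus-cell gluings, which also occur in the interior), the argument is that each $H_{\mathbf l}$ on $E_{\mathbf l}$ was chosen — via clause (5) in the ``Embedding the extensions'' paragraph — so that \emph{the same top-dimensional cactus cell in $P_\sigma$ and in $P_\nu$ carries the same extended product and the same transported homotopy}; and on lower cells one again inducts, using Corollary \ref{simple}(3) that $H_{\mathbf l}$ is the identity on $\partial E_{\mathbf l}$, so the pieces glue cell-by-cell exactly as the $\mathcal C_\tau$ do. Condition $(*3b)$ is the subtlest: after Step $n$ the map $h_\sigma = H_\sigma(\cdot,1)$ sends all of $P_n$ except the interior of the central hypercube $\mathcal C_{B^+_{\sigma_1}(\cdots B^+_{\sigma_{n-1}}(scc(\sigma_n))\cdots)}$ into proper faces of $P_n$ (this is the cumulative effect of Proposition \ref{extended}(1) applied shell-by-shell in Steps $2,\dots,n-1$, plus the face homotopy in Step $n$), and on those proper faces the inductive hypothesis for $n-1$ gives $h$-images that are $\sim_{\mathcal F}$-compatible; one then checks that two $\sim_{\mathcal C}$-identified points $x\in P_\sigma$, $y\in P_\nu$ land in faces that are $\sim_{\mathcal F}$-identified, because the cell $\mathcal C_\tau$ containing them corresponds, under $h_\sigma$ resp.\ $h_\nu$, to the $\mathcal F$-face indexed by the same unshuffle $\mathbf l$ obtained from $\prec_\tau$. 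The last step (the self-induced homotopies) is then formal: since $\overline h_n\circ\overline 1_n$ and $\overline 1_n\circ\overline h_n$ are both induced by $h_n=\coprod h_\sigma$ (the diagrams in Remark \ref{homotopyrmk}), the homotopy $\coprod_\sigma H_\sigma$ descends along $p_{\mathcal F}$ by $(*1),(*2),(*3b)$ to give $H_{\mathcal F}\colon 1_{\mathcal F(n)}\simeq \overline h_n\circ\overline 1_n$, and along $p_{\mathcal C}$ by $(*1),(*3a),(*3b)$ to give $H_{\mathcal C}\colon 1_{\mathcal C(n)}\simeq \overline 1_n\circ\overline h_n$. I expect the main obstacle to be the bookkeeping in $(*3b)$ — verifying that the face into which a boundary cell of $P_\sigma$ is pushed is \emph{the} face dictated by $\prec_\tau$, uniformly in $\sigma$ — which is exactly why the technical Corollary \ref{simple} and the careful choice of the $\phi_{m_1,\dots,m_k}$ and cone vertices were set up, and some of this is deferred to the Appendix.
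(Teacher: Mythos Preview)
Your proposal is correct and follows essentially the same route as the paper: verify $(*1)$--$(*3)$ directly from the construction, using that the $H_{\mathbf l}$ were chosen uniformly across the $P_\sigma$ (clause (5)) so that shared cactus cells carry the same homotopy, and that on proper faces the homotopy restricts to the lower-dimensional one (built from the product formula $H_{\mathcal F_{\mathbf l}}=\psi_{\mathbf l}^{-1}(\prod H_{\mathcal F_{12\cdots l_i}})$). The paper's write-up is terser --- for $(*3b)$ it simply observes that at time $\tfrac{n-1}{n}$ a shared cactus cell $\mathcal C_{\mathbf l}$ has been pushed into a common face $\mathcal F_{\mathbf k}$, and then Step~$n$ moves it to some $\mathcal F_{\mathbf k'}\preceq\mathcal F_{\mathbf k}$ --- whereas you phrase things more explicitly as an induction on $n$ and attempt to name the target face via $\prec_\tau$; that last identification is not quite how the paper argues (there is no canonical unshuffle read off from $\prec_\tau$ alone), but the substance is the same.
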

\begin{proof}

There is nothing to check for $n=1,2$.

By our previous discussion, it suffices to prove that the homotopies $H_{\sigma}$, $\sigma\in S_n$ satisfy $(*1)$, $(*2)$ and $(*3)$.\\

As a warm-up, let us consider the case for $n=3$ in detail first. $(*1)$ holds by Corollary \ref{simple}. Let $x\sim_{\mathcal{F}}y$ where $x\in P_n$ indexed by $\sigma$ and $y\in P_n$ indexed by $\nu$. Then $x=y$ in $\mathrm{Int}(\mathcal{F}_{\mathbf{l}})$ for some $\mathbf{l}\in \mathcal{J}_{\sigma}\cap \mathcal{J}_{\nu}$. If the order of $\mathbf{l}$ is 2, then $\sigma=\nu$. So $H_{\sigma}(x,t)=H_{\nu}(x,t)=H_{\nu}(y,t)$ in $\mathcal{F}_\mathbf{l}$. Otherwise (the order of $\mathbf{l}$ is smaller than 2), $H_{\sigma}(x,t)=x=y=H_{\nu}(y,t)$ in $\mathcal{F}_{\alpha}$ because we have the identity homotopy on the $1$-skeleton. So $H_{\sigma}(x,t)\sim_{\mathcal{F}}H_{\nu}(y,t)$ for any $t$, verifying $(*2)$. Let $x\sim_{\mathcal{C}}y$ where $x\in P_n$ indexed by $\sigma$ and $y\in P_n$ indexed by $\nu$. Then there is $\mathbf{l}\in \mathcal{T}^2_{\sigma}\cap \mathcal{T}^2_{\nu}$ with the lowest arity number such that $x=y$ in $\mathcal{C}_{\mathbf{l}}$. Then for any $t$, either $H_{\sigma}(x,t)=H_{\nu}(y,t)$ in $\mathcal{C}_{\mathbf{l}}$ or $H_{\sigma}(x,t)=H_{\nu}(y,t)$ in $\mathcal{C}_{\mathbf{k}}$ where $\mathbf{k}\in \mathcal{T}^2_{\sigma}\cap \mathcal{T}^2_{\nu}$ and $\mathbf{k}$ has arity number one greater than or equal to that of $\mathbf{l}$. Thus, $(*3a)$ holds. Lastly, if $x=y$ in $\mathrm{Int}(\mathcal{C}_\mathbf{l})$ where $\mathbf{l}$ has arity number $1$, then $\sigma=\nu$ and so $H_{\sigma}(x,1)=H_{\sigma}(y,1)=H_{\nu}(y,1)$ in $\mathcal{F}_{\sigma}=\mathcal{F}_{\nu}$; otherwise, $H_{\sigma}(x,1)=H_{\nu}(y,1)$ in $\mathcal{F}_{\mathbf{k}}$ for some $\mathbf{k}\in \mathcal{J}^1_{\sigma}\cap \mathcal{J}^1_{\nu}$. Hence, $(*3b)$ holds.\\

For $n\geq 4$, from the description of $H_{\sigma}$ when $t\in [0,\frac{1}{n}]$, we see that $(*1)$ holds. Now let $x\sim_{\mathcal{F}}y$ where $x\in P_n$ indexed by $\sigma$ and $y\in P_n$ indexed by $\nu$. Then $x=y$ in $\mathrm{Int}(\mathcal{F}_{\mathbf{l}})$ for some $\mathbf{l}\in \mathcal{J}_{\sigma}\cap \mathcal{J}_{\nu}$. Then $H_{\sigma}(x,t)=H_{\nu}(y,t)$ in $\mathcal{F}_{\mathbf{l}}$ (but not necessarily in $\mathrm{Int}(\mathcal{F}_{\mathbf{l}})$). Thus, $(*2)$ holds. Now let $x\sim_{\mathcal{C}}y$ where $x\in P_n$ indexed by $\sigma$ and $y\in P_n$ indexed by $\nu$. Then there is $\mathbf{l}\in \mathcal{T}^{n-1}_{\sigma}\cap \mathcal{T}^{n-1}_{\nu}$ with the lowest arity number such that $x=y$ in $\mathcal{C}_{\mathbf{l}}$. Then for any $t$, either $H_{\sigma}(x,t)=H_{\nu}(y,t)$ in $\mathcal{C}_{\mathbf{l}}$ or $H_{\sigma}(x,t)=H_{\nu}(y,t)$ in $\mathcal{C}_{\mathbf{k}}$ where $\mathbf{k}\in \mathcal{T}^{n-1}_{\sigma}\cap \mathcal{T}^{n-1}_{\nu}$ and $\mathbf{k}$ has arity number greater than or equal to that of $\mathbf{l}$. Therefore, $(*3a)$ holds. Finally, let $x\sim_{\mathcal{C}}y$ where $x\in P_n$ indexed by $\sigma$ and $y\in P_n$ indexed by $\nu$, so there is $\mathbf{l}\in \mathcal{T}_{\sigma}\cap \mathcal{T}_{\nu}$ such that $x=y$ in $\mathcal{C}_{\mathbf{l}}$. Then there is $\mathbf{k}=\mathbf{k}_1|\cdots|\mathbf{k}_k\in \mathcal{F}_{\sigma}\cap \mathcal{F}_{\nu}$ such that $H_{\sigma}(\mathcal{C}_{\mathbf{l}},\frac{n-1}{n})=H_{\nu}(\mathcal{C}_{\mathbf{l}},\frac{n-1}{n}) \subset \mathcal{F}_{\mathbf{k}}$. By the definition of the homotopy, there is a $\mathbf{k'}<\mathbf{k}$ such that $H_{\sigma}(x,1)=H_{\nu}(y,1)$ in $\mathcal{F}_{\mathbf{k}'}$, establishing $(*3b)$.

\end{proof}

\section{Discussion: Relations to other $E_2$ operads, application and Outlook}
\label{discussionpar}
There seem to be two breeds of $E_2$ operads. The first, and older ones, are useful for the recognition of loop spaces, like the little
discs, the little cubes and the Steiner operad. The other, and the newer generation, are good for solving Deligne's conjecture. Of course there are the cofibrant models, which are by definition a hybrid. These have the drawback that they are usually a bit too abstract to handle to give actual operadic operations, by which we mean they act only through a factorization via a more concrete operad.

The first type usually has configuration spaces as deformation retracts, namely, as we have discussed, the have Milgram's models $\{\CalF(n)\}$ as retracts, which is classically used in the loop space program \cite{Milgram66,Ber97}. See \cite{MSS}[Chapter2.4] as a good survey. One feature of $\CalF$, however, is that it is not an operad in any known way. 
There are some remnants \cite{MSS,Ber97} using convex hulls,
but there is not even a closed cellular operad structure.

On the other hand, the other models have an algebraic aspect, which allows one to define operations on the Hochschild complex.
Their diversity is actually not as big as one would think.
On the cell/chain level they are variations of the Gerstenhaber--Voronov's GV-operad of braces and multiplication  \cite{GV}. On the topological level, they all retract to $Cact$, which deformation retracts to $\CalC=Cact^1$.
The difference to the above is that $\CalC$ is actually a chain level operad, while $\CalF$ is not. Moreover, $\CalC$ has the b/w tree structure,
making it ready to give operations, such as in Deligne's conjecture.

Let us briefly go through the list and history. Most of the operads
where actually constructed first on the combinatorial level and then realized as topological spaces, using totalizations, realizations or condensations. For the operad $Cact$ the story was the inverse. It existed first as a topological operad \cite{KLP}, and then it was realized that it is $E_2$ and it has an operadic cellular chain model $CC_*(Cact^1)$ \cite{Kau02} corresponding to GV. 

The first operad is the Kontsevich--Soibelman minimal operad $M$, which is the generalization of the GV operad to the $A_{\infty}$ case. It gave the first solution to Deligne's conjecture and works over $\mathbb{Z}$. The procedure here is a little different as the Fulton--MacPherson compactification and a W-construction were used. The fact that it can be realized as a version of cacti is contained in \cite{KSchw}. If the $A_{\infty}$ algebra is strict, it contacts to $CC_*(Cact)$, see \cite{Kau10}. The next operad was by McClure and Smith \cite{MS} and it gives a cosimplicial description of the GV--operad and hence can realize a topological operad using totalization.
The current paper also fixes the homtopy type of formulas in \cite{MS} directly. 

In their second paper, McClure and Smith \cite{MS03} introduced sequence notation for the GV operad, which has been one of the most influential constructions in the theory. 
The sequence operad $S_2$ is with hindsight actually isomorphic to the description by b/w bi--partite trees \cite{Kau02,Kau03} as we recall below. The totalization of \cite{MS} then reconstructs spineless cacti on the topological level.
Alternatively, \cite{MS03} used Berger's machine. The operad structure of
this comparison was further clarified in \cite{BF}.

The third paper \cite{MS04} contains the very fruitful functor operad approach. This gives back the usual operations if
certain degeneracy maps are applied.  That this procedure is also operadic follows from a more general theorem  proved in \cite{hoch2}[Theorem 4.4]. The degeneracies are given as angle markings. An intermediate step to the full realization is given by partitioning, see  \cite{hoch2}. This provides a colored set operad structure. This was cast into a cosimplicial/simplicial picture in \cite{BB} as the lattice path operad. Here the map from $\Delta^n$ specifies the partitioning in the sense of \cite{hoch2} and is equivalent
to using the foliation operator of \cite{Kau03}. After doing the condensation,
one again obtains spineless cacti on the topological level.

The quotient map we discuss is probably related to the surjection of
\cite{BF}, but this is not clear at the moment.
What is clear is that the quotient makes the non--operad $\CalF$ into an operad on the cell level and even a topological quasi--operad, i.\ e.\ associative up to homotopy.

For the cyclic Deligne conjecture, which is an application of the $E_2$ identification,  the story is similar.
The first proof is in \cite{cyclic} using $Cacti$. The first full proof that $Cacti$ are indeed equivalent to $fD_2$ is in \cite{Kau02}.
Here one uses the $E_2$ structure of spinless cacti and the fact that $Cacti$ is a bi--crossed product.
The paper \cite{BB} adds
the simplicial structure on the chain level explaining the usual operations as action of a colored operad. Another version of this partitioning is contained in \cite{hoch2}. Condensation then reconstructs cacti.
Another chain level action was given slightly   later than \cite{cyclic} in \cite{TradlerZeinalian}. They did not, however, show that the relevant chain level operads are models for the framed little discs. 
 So, on the topological level the operads adapted to prove the cyclic version are basically all cacti. The added difficulty, is that for cacti one is dealing with a bi--crossed product and not just a semi--direct product. The cyclic $A_{\infty}$ version again based on cacti is contained in \cite{Ward}.

\subsection{Bijection of $CC_*(Cact^1)$ and $S_2$}
\label{isopar}
The isomorphism is clear from the isomorphisms 
$$\xymatrix{S_2\ar@{<->}[r]^>>>>{\simeq}&{\rm formulas}\ar@{<->}[r]^>>>>{\simeq}&GV\ar@{<->}[r]^>>>>{\simeq}&CC_*(Cact^1)\ar@{<->}[r]^>>>>{\simeq}&\mathcal{T}_{bp}^{pp,nt}
}
$$
contained in \cite{MS,MS03,Kau02}.

This isomorphism  between $CC_*(Cact^1)$ and $S_2$ is actually explicitly given in \cite{Kau08}, which also contains the idea of cacti with stops, i.e.\ monotone parameterizations as explained in \cite{letter}, used by Salvatore in the cyclic case, to rewrite the existing proofs in the language of McClure--Smith.
 Given a cactus or equivalently a b/w tree, one
obtains a sequence as follows. Go around the outside circle and record the lobe number you see. Equivalently, for a planar planted b/w tree all the white angles (i.e. pairs of subsequent flags to a white vertex) come in a natural order by embedding in the plane.
Reading off the labels give the direct morphism, which is easily seen to be an isomorphism.

In fact, \cite{Kau08}[Proposition 4.11] actually contains a generalization to the full $E_{\infty}$ structures. Here one obtains an obviously surjective map. The injectivity is clear on the $E_2$ level.

\subsection{Lifing of a cellular quasi--isomorphism}
Let $CC_*(\mathcal{F}(n))$ be the cellular chains of $\mathcal{F}(n)$. In \cite{Tur}, Tourchine constructed a homomorphism $I_*: CC_*(\mathcal{F}(n))\rightarrow CC_*(Cact^1(n))$ of complexes and showed that $I_*$ is a quasi-isomorphism using homological algebra. By checking the definition of $I_*$ on Page 882 of \cite{Tur}, one immediately has the following.

\begin{proposition}
The homomorphism $I_*: CC_*(\mathcal{F}(n))\rightarrow CC_*(Cact^1(n))$ is induced from the homotopy equivalence $\overline{1}_n: \mathcal{F}(n)\rightarrow Cact^1(n)$. Thus, $I_*$ is a quasi-isomorphism.
\end{proposition}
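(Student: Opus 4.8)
The plan is to show that Tourtchine's chain map $I_*$ is literally the map on cellular chains induced by the homotopy equivalence $\overline{1}_n$, and then to read off the quasi-isomorphism statement from Theorem \ref{2nd}.

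First I would make the map $CC_*(\overline{1}_n)$ completely explicit. Recall that $\overline{1}_n$ is induced from the identity $1_n$ on $\coprod_{\sigma\in S_n}P_n$, and that by Theorem \ref{CactP_n} each $P_\sigma$ is subdivided into the cactus cells indexed by $\mathcal{T}_\sigma$. Consequently $\overline{1}_n$ is cellular for the permutahedral CW structure on $\mathcal{F}(n)$ (cells indexed by $\mathcal{J}(n)$) and the cactus CW structure on $Cact^1(n)=\mathcal{C}(n)$ (cells indexed by $\mathcal{T}_n$): a face of $\mathcal{F}(n)$ indexed by an unshuffle $\mathbf{l}=\mathbf{l}_1|\cdots|\mathbf{l}_k\in\mathcal{J}^i(n)$ is sent, via the identity on the corresponding product of permutahedra, onto $\mathrm{colim}_{\mathcal{T}_{\mathbf{l}}}\mathcal{C}$, whose top-dimensional cells are exactly the $C(\tau)$ with $\tau\in\mathcal{T}^i_{\mathbf{l}}$; in particular the $i$-skeleton goes to the $i$-skeleton, and on the interior of $\mathcal{F}(\mathbf{l})$ the map is a piecewise-linear subdivision. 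Hence on cellular chains
\[
CC_*(\overline{1}_n):\ [\mathcal{F}(\mathbf{l})]\ \longmapsto\ \sum_{\tau\in\mathcal{T}^{i}_{\mathbf{l}}}\varepsilon(\tau)\,[C(\tau)],
\]
where $\varepsilon(\tau)=\pm1$ compares the orientation of $C(\tau)$ inherited from $\mathcal{F}(\mathbf{l})$ with its chosen orientation; with the orientation conventions in force the subdivision is orientation-preserving, so all $\varepsilon(\tau)=+1$. This is precisely the description $CC_*(P_\sigma)\to\sum_{\prec_\tau\text{ compatible with }<_\sigma}C(\tau)$ recorded in the proposition preceding Theorem \ref{2nd}.

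Next I would compare this with Tourtchine's definition of $I_*$ on page~882 of \cite{Tur}. There $I_*$ is defined combinatorially by sending the generator attached to an unshuffle to the signed sum of the b/w trees indexing the cactus cells occurring in the polyhedral subdivision of that face --- i.e.\ by the very formula displayed above, written in Tourtchine's notation for the faces of $\mathcal{F}(n)$ and the cells of $Cact^1(n)$. The work here is a dictionary check: one must match Tourtchine's indexing of faces of $\mathcal{F}(n)$ (unshuffles/surjections) with $\mathcal{J}(n)$, his indexing of cells of $Cact^1(n)$ (planar b/w bipartite trees) with $\mathcal{T}_n$, and verify that the incidence datum ``which cactus cell lies in which permutahedral face'' coincides --- which is exactly the compatibility of the two quotients $p_{\mathcal{F}}$ and $p_{\mathcal{C}}$ used in the lemma stating that $p_{\mathcal{C}}$ is constant on the fibers of $p_{\mathcal{F}}$. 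Carrying this out term by term yields $I_*=CC_*(\overline{1}_n)$. Since $\overline{1}_n$ is a homotopy equivalence by Theorem \ref{2nd}, $CC_*(\overline{1}_n)$ is a chain homotopy equivalence and in particular a quasi-isomorphism; therefore so is $I_*$. One could equally well cite only that $\overline{1}_n$ is a quasi-isomorphism, which is established independently of the explicit homotopy inverse.

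I expect the only genuine obstacle to be this last dictionary-and-sign bookkeeping: pinning down Tourtchine's orientation and sign conventions and checking cell by cell that they match ours, so that the two maps agree on the nose rather than merely up to a sign. Once the geometric formula for $CC_*(\overline{1}_n)$ above is in hand, the identification and the conclusion are immediate.
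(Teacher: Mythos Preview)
Your proposal is correct and follows exactly the paper's approach: the paper simply asserts that the identification is immediate upon checking Tourtchine's definition on page~882 of \cite{Tur}, and you have spelled out that check in detail together with the explicit description of $CC_*(\overline{1}_n)$ already recorded in the proposition preceding Theorem~\ref{2nd}. The only addition you make over the paper is the careful discussion of orientations and the dictionary with Tourtchine's notation, which the paper leaves implicit.
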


\subsection{Further connections}
Furthermore, since both $\mathcal{F}(n)$ and $\mathcal{C}(n)$ are obtained by gluing $n!$ contractible polytopes, our work also has connections to Batanin's theory of the symmetrization of contractible $2-$operads (and $n$-operads in general) \cite{Bat}. It is worth pointing out that instead of making the spaces bigger by compactifying the various deformation retract models of the configuration spaces $F(\mathbb{R}^2,n)$ to get operadic structures, one can subdivide the constituent contractible pieces ($P_n$) and then do further gluing to get the quasi-operad $\{Cact^1(n)\}_{n\geq 1}$, which become a bona-fide operad $\{Cact(n)\}_{n\geq 1}$ after taking the semi-direct product with the scaling operad \cite{Kau02}.

Another $E_2$ operad which is quite different from $\mathcal{C}_2$ or $\mathcal{D}_2$ is $\{|C_2S_n|\}_{n\geq 1}$, the realization of the second term of the Smith filtration of the simplicial universal bundle $WS_n$. A reformulation of $C_2S_n$ resembles the presentation of $\mathcal{F}(n)$: $C_2S_2$ is built up out of $n!$ copies of the nerve $\mathcal{N}(S_n,<)$ where $<$ is the weak Bruhat order on the set $S_n$  \cite{Ber99}. It can be readily checked that for $n=1,2,3$, $|\mathcal{N}(S_n,<)|$ deformation retracts to $P_n$ and $C_2S_n$ deformation retracts to $\mathcal{F}(n)$. The retraction can be explicitly described. The same is hoped for $n\geq 4$, even though the dimension of $|C_2S_n|$ grows quadraticly as $\frac{n(n-1)}{2}$ and difficulty already arises when $n=4$.

One can also try relating the higher dimensional little discs (cubes etc.) operad to higher dimensional cacti operad through products of permutahedra. We refer the reader to \cite{Kau08} for a higher dimensional version of the cacti operad.

\section{Appendix. Proof of Proposition \ref{ProductRetract} and Proposition \ref{extended}.}

\subsection{Proof of Proposition \ref{ProductRetract}}

Proposition \ref{ProductRetract} follows from two lemmas.

\begin{lemma}
\label{BallRetract}
Let $S^{n-2}_{\pm}$ be the upper(lower)-hemisphere $\{(x_1,\cdots,x_{n-1})\in \mathbb{R}^{n-1}\big| x_1^2+\cdots+x_{n-1}^2=1, x_{n-1}\geq 0 (x_{n-1}\leq 0)\}$ in $\mathbb{R}^{n-1}$. Then $S^{n-2}_+$ is a deformation retract of the unit cell $D^{n-1}$.
\end{lemma}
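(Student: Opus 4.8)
The plan is to exhibit an explicit (strong) deformation retraction by pushing every point of the ball ``straight up'' onto the graph of $x_{n-1}=\sqrt{1-|x'|^2}$. Write coordinates as $x=(x',x_{n-1})$ with $x'=(x_1,\dots,x_{n-2})\in\mathbb{R}^{n-2}$, so that $D^{n-1}=\{(x',x_{n-1}) : |x'|^2+x_{n-1}^2\le 1\}$; in particular every point of $D^{n-1}$ satisfies $|x'|\le 1$ and $|x_{n-1}|\le\sqrt{1-|x'|^2}$, while $S^{n-2}_+=\{(x',x_{n-1}) : |x'|\le 1,\ x_{n-1}=\sqrt{1-|x'|^2}\}$.

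First I would define $H\colon D^{n-1}\times I\to\mathbb{R}^{n-1}$ by
\[
H\big((x',x_{n-1}),t\big)=\Big(x',\ (1-t)\,x_{n-1}+t\sqrt{1-|x'|^2}\,\Big).
\]
The required properties are then routine to check: at $t=0$ this is the identity; the last coordinate of $H(x,t)$ is a convex combination of $x_{n-1}$ and $\sqrt{1-|x'|^2}$, both of absolute value at most $\sqrt{1-|x'|^2}$, so $|x'|^2+(\text{last coord})^2\le 1$ and the image lies in $D^{n-1}$; at $t=1$ the last coordinate equals $\sqrt{1-|x'|^2}\ge 0$ and $|x'|^2+(1-|x'|^2)=1$, so $H(\cdot,1)$ maps into $S^{n-2}_+$; and if $x\in S^{n-2}_+$ then $x_{n-1}=\sqrt{1-|x'|^2}$, whence $H(x,t)=x$ for all $t$. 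Thus $H$ is a strong deformation retraction of $D^{n-1}$ onto $S^{n-2}_+$, which gives the lemma.

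The only point that is not completely formal is joint continuity of $H$, and this too is immediate: the map $x'\mapsto\sqrt{1-|x'|^2}$ is continuous on the closed unit disk $\{|x'|\le 1\}$, since it extends continuously (by $0$) over the equator $|x'|=1$, where the vertical segments $\{x'\}\times[-\sqrt{1-|x'|^2},\sqrt{1-|x'|^2}]$ degenerate to points. So there is essentially no obstacle; the proof is a matter of recording this formula. Equivalently, one may phrase the argument by viewing $D^{n-1}$ as fibered over $D^{n-2}=\{|x'|\le 1\}$ via $x\mapsto x'$, with fiber the segment just described and $S^{n-2}_+$ the ``top'' section, and note that $H$ simply slides each fiber to its upper endpoint.
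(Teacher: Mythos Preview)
Your proof is correct and is essentially identical to the paper's own argument: the paper defines the same homotopy $H_{D^{n-1}}((x_1,\dots,x_{n-1}),t)=(x_1,\dots,x_{n-2},(1-t)x_{n-1}+t\sqrt{1-x_1^2-\cdots-x_{n-2}^2})$ and notes the same fiberwise interpretation. Your write-up is in fact more detailed than the paper's, which simply asserts that the required properties are readily checked.
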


\begin{proof}
Define $$H_{D^{n-1}}:D^{n-1}\times I\rightarrow D^{n-1}$$ by $$H_{D^{n-1}}((x_1,x_2,\cdots,x_{n-1}),t)=(x_1,x_2,\cdots,x_{n-2},(1-t)x_{n-1}+t\sqrt{1-x_1^2-\cdots-x_{n-2}^2}).$$ It can be readily checked that $H_{D^{n-1}}$ is a well-defined homotopy. Geometrically, $H_{D^{n-1}}$ contracts each fiber over a point $(x_1,\cdots,-\sqrt{1-x_1^2-\cdots-x_{n-2}^2})$ on $S^{n-2}_-$ to the point $(x_1,\cdots,\sqrt{1-x_1^2-\cdots-x_{n-2}^2})$ on $S^{n-2}_+$. In addition, $H_{D^{n-1}}(\cdot,t)$ is the identity map on $S^{n-2}_+$ for all $t\in I$. So $H_{D^{n-1}}$ is a deformation retraction from $D^{n-1}$ onto $S^{n-2}_+$ ($S^{n-2}_+$ is a deformation retract of $D^{n-1}$).

\begin{figure}[!h]
		\centering
		\includegraphics[width=150mm]{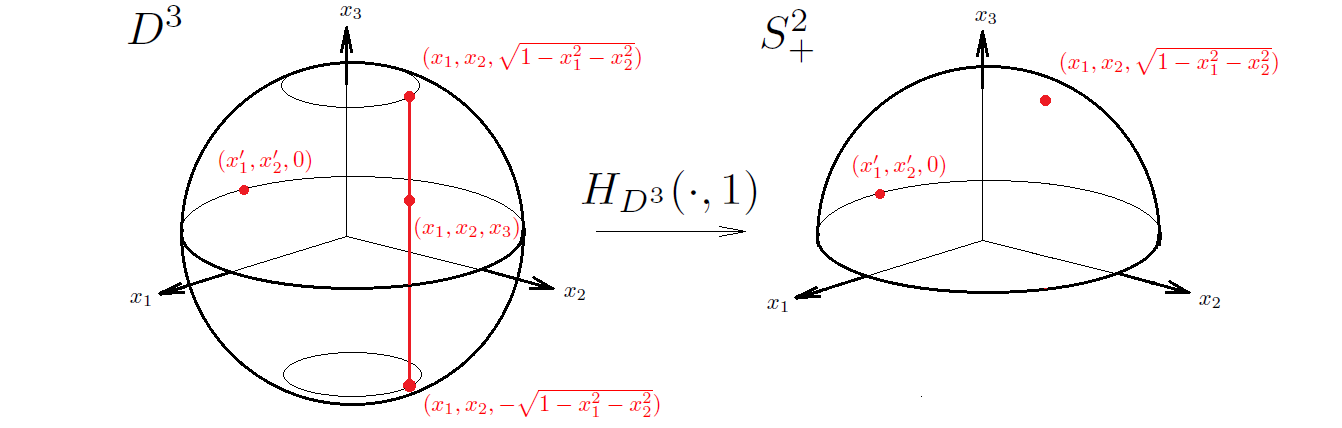}
		\caption{Deformation retraction from $D^3$ to $S^2_+$.}
\end{figure}
\end{proof}

\begin{lemma}
\label{ProductToBall}
There is a homeomorphism from $P_{m_1}\times P_{m_2}\times \cdots \times P_{m_k}\times\Delta^k$ to $D^{n-1}$ mapping $P_{m_1}\times P_{m_2}\times \cdots \times P_{m_k}\times(\bigcup_{i=2}^kv_1\cdots \widehat{v_i}\cdots v_{k+1})$ homeomorphically onto $S^{n-2}_-$.
\end{lemma}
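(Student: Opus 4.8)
The plan is to recognize every space appearing in the statement as a piecewise linear (PL) ball or sphere and to move $P_{m_1}\times\cdots\times P_{m_k}\times\Delta^k$ onto $D^{n-1}$ through a short chain of elementary PL identifications, keeping careful track of where the subcomplex $P_{m_1}\times\cdots\times P_{m_k}\times B$ is sent; here I abbreviate $P:=P_{m_1}\times\cdots\times P_{m_k}$ and $B:=\bigcup_{i=2}^k v_1\cdots\widehat{v_i}\cdots v_{k+1}$. First the bookkeeping: $\dim(P\times\Delta^k)=\sum_i(m_i-1)+k=n-1$, while $B$ is a union of $k-1$ of the $k+1$ facets of $\Delta^k$ and — as already used in the proof of Theorem~\ref{CactP_n} — is a PL $(k-1)$-ball whose boundary is the PL $(k-2)$-sphere $B\cap B'$, where $B':=\overline{\partial\Delta^k\setminus B}=(v_2\cdots v_{k+1})\cup(v_1\cdots v_k)$ is the union of the remaining two facets (also a PL $(k-1)$-ball). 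Since $P$ is a product of convex polytopes it is a convex polytope of dimension $n-1-k$, hence PL-homeomorphic to $D^{n-1-k}$, and therefore $P\times B\cong D^{n-1-k}\times D^{k-1}\cong D^{n-2}$.

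The only external input will be the elementary fact that a PL $m$-sphere splits \emph{uniquely} into two PL $m$-balls glued along their common boundary: if $\Sigma=A\cup A'$ with $A,A'$ PL $m$-balls and $A\cap A'=\partial A=\partial A'$, then there is a PL homeomorphism $(\Sigma,A,A')\to(S^m,S^m_-,S^m_+)$. (A one-line proof: take PL homeomorphisms $A\to\Delta^m$ and $A'\to\Delta^m$ and precompose the second with a radial extension of a homeomorphism of $\partial\Delta^m$ so that the two agree on $\partial A=\partial A'$, then glue.) Applying this to $\partial\Delta^k=B\cup B'$ and extending radially over the ball $\Delta^k$ yields a PL homeomorphism $\Delta^k\to B\times I$ carrying $B$ onto $B\times\{0\}$ and $B'$ onto $(B\times\{1\})\cup(\partial B\times I)$. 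Multiplying by $P$ gives
\[
P\times\Delta^k\;\cong\;P\times B\times I\;=\;Q\times I,\qquad Q:=P\times B\cong D^{n-2},
\]
under which $P\times B$ goes to $Q\times\{0\}$. Finally $\partial(Q\times I)=(Q\times\{0\})\cup\bigl[(Q\times\{1\})\cup(\partial Q\times I)\bigr]$ is again a splitting of the PL $(n-2)$-sphere $\partial(Q\times I)$ into two PL $(n-2)$-balls along their common boundary sphere, so the same fact (plus radial extension over $Q\times I$ and $D^{n-1}$) produces a PL homeomorphism $Q\times I\to D^{n-1}$ carrying $Q\times\{0\}$ onto $S^{n-2}_-$. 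Composing all three identifications gives a PL homeomorphism $P_{m_1}\times\cdots\times P_{m_k}\times\Delta^k\to D^{n-1}$ sending $P_{m_1}\times\cdots\times P_{m_k}\times B$ onto $S^{n-2}_-$, which is the assertion; combined with Lemma~\ref{BallRetract} it proves Proposition~\ref{ProductRetract}.

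I expect the main obstacle to be expository rather than substantive: pinning down exactly the right "unique splitting" statement in the PL category and checking that the reductions $\Delta^k\cong B\times I$ and $Q\times I\cong D^{n-1}$ respect the facial data, without sliding into heavier machinery (PL Schoenflies, regular neighborhood uniqueness) which would also do the job but would obscure how elementary the argument is. A slightly slicker alternative is to note directly that $P\times B$ is a codimension-$0$ PL subball of the PL $(n-2)$-sphere $\partial(P\times\Delta^k)$ and quote that such a subball is unknotted with complement a PL ball; I would nonetheless include the hands-on chain above in the paper to keep it self-contained.
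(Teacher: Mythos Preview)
Your argument is correct and takes a genuinely different route from the paper. The paper works in explicit coordinates throughout: it shows $P\times\Delta^k$ is star-shaped from its centre $\mathcal{C}$, then uses translation plus a radial rescaling (the map $R$) to identify it with $D^{n-1}$, and finally composes with a rotation and a carefully assembled stereographic-projection trick (the maps $\phi$, $G_S$, $G_N$, $F$) to drag $P\times B$ onto $S^{n-2}_-$. Your proof instead stays in the PL category and reduces everything to one clean lemma, the uniqueness of the splitting of a PL sphere into two balls along their common boundary, applied twice (once to pass $\Delta^k\to B\times I$, once to pass $Q\times I\to D^{n-1}$). What your approach buys is brevity and conceptual clarity; what the paper's approach buys is an explicit homeomorphism $F\circ b$ written as a composite of named coordinate maps, and this explicitness is not idle: the very next result, Proposition~\ref{extended}, reuses those maps $R$, $\phi$, $F$ (extended to $\widetilde R$, $\widetilde F$) to build the homotopy on the ``tab'' $(P\times B)\times I_\epsilon$. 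If you adopt your argument in the paper you would need either to redo Proposition~\ref{extended} in the same abstract style (which is perfectly feasible---the extension over a collar is again a PL-ball statement) or to note that any PL homeomorphism of pairs can be made explicit enough to carry the extended homotopy.
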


\begin{proof}
Let $\mathcal{C}_{m_l}$ be the geometric center of $P_{m_l}$, $l=1,\cdots,k$ and $F_{j_l}$, $j_l\in I_l$ the facets of $P_{m_l}$. Let $\mathcal{C}_k=\frac{1}{k+1}(v_1+\cdots+v_{k+1})$. So $\mathcal{C}_k$ is the barycenter of $\Delta^k$. Then $P_{m_l}=\bigcup_{j_l\in I_l}F_{j_l}*\mathcal{C}_{m_l}$ where $*$ is the join operation, and $\Delta^k=\bigcup_{i=1}^{k+1}v_1\cdots\widehat{v_i}\cdots v_{k+1}\mathcal{C}_k$. So $P_{m_1}\times \cdots \times P_{m_k}\times \Delta^k=$ $$\bigcup_{l=1}^k\bigcup_{j_l\in I_l}\bigcup_{i=1}^{k+1}(F_{j_1}*\mathcal{C}_{m_1})\times \cdots \times (F_{j_l}*\mathcal{C}_{m_l})\times \cdots \times (F_{j_k}*\mathcal{C}_{m_k})\times (v_1\cdots\widehat{v_i}\cdots v_{k+1}\mathcal{C}_k).$$

Notice that each product on the right above has $\mathcal{C}=(\mathcal{C}_{m_1},\cdots,\mathcal{C}_{m_k},\mathcal{C}_k)$ as one of its vertices and is the union of line segments from $\mathcal{C}$ to a point of $\bigcup_{l=1}^k(F_{j_1}*\mathcal{C}_{m_1})\times \cdots \times F_{j_l}\times \cdots \times (F_{j_k}*\mathcal{C}_{m_k})\times (v_1\cdots\widehat{v_i}\cdots v_{k+1}\mathcal{C}_k)\bigcup (F_{j_1}*\mathcal{C}_{m_1})\times \cdots \times (F_{j_k}*\mathcal{C}_{m_k})\times (v_1\cdots\widehat{v_i}\cdots v_{k+1})$ and the lines intersect only at $\mathcal{C}$.\\

The line segments of different products having $\mathcal{C}$ as a vertex above agree on the intersections. So $P_{m_1}\times \cdots \times P_{m_k}\times \Delta^k$ is the union of line segments emanating from $\mathcal{C}$ and the union of the end points different from $\mathcal{C}$ of the line segments is $\partial (P_{m_1}\times \cdots \times P_{m_k}\times \Delta^k)$.\\

\begin{figure}[!ht]
		\centering
		\includegraphics[width=150mm]{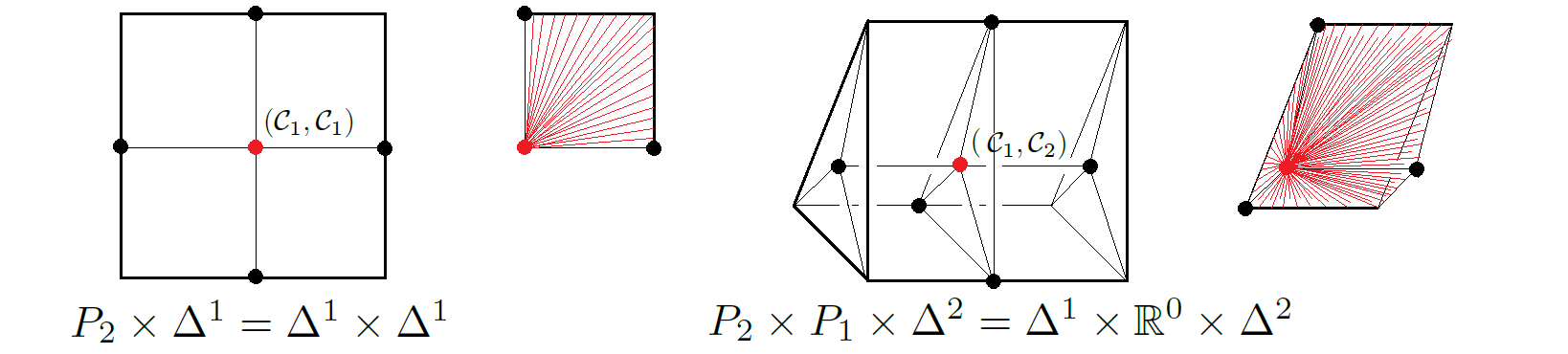}
		\caption{Decompositions of $P_2\times \Delta^1$ and $P_2\times P_2\times \Delta^2$ into line segments joined at their $\mathcal{C}$.}
\end{figure}

We will use a similar proof of that of Lemma 1.1 in \cite{Mun2} four times from now on. Here is the first time.\\

Let $T:P_{m_1}\times \cdots \times P_{m_k}\times \Delta^k\rightarrow \mathbb{R}^{n-1}$ be the translation defined by $T(x)=x-\mathcal{C}$. Let $r':\mathbb{R}^{n-1}\backslash \{(0,\cdots,0)\}\rightarrow S^{n-2}$ be the radial contraction given as $r'(x)=\frac{x}{|x|}$, where $|x|$ is the Euclidean norm of $x$. Since each half open ray emanating from $(0,\cdots,0)$ intersects with $T(\partial (P_{m_1}\times \cdots \times P_{m_k}\times \Delta^k))$ at one and only one point, $r'$ restricts to a continuous bijection $r:T(\partial (P_{m_1}\times \cdots \times P_{m_k}\times \Delta^k))\rightarrow S^{n-2}$. Being the continuous image of a compact space, $T(\partial (P_{m_1}\times \cdots \times P_{m_k}\times \Delta^k))$ is compact, and $S^{n-2}$ is Hausdorff, so $r$ is indeed a homeomorphism.\\

Now we extend $r:T(\partial (P_{m_1}\times \cdots \times P_{m_k}\times \Delta^k))\rightarrow S^{n-2}$ to $R:T(P_{m_1}\times \cdots \times P_{m_k}\times \Delta^k)\rightarrow D^{n-1}$. Define
\begin{equation*}
R(x)=\left\{
\begin{array}{ll}
\displaystyle \frac{x}{|r^{-1}(\frac{x}{|x|})|}& \mbox{ if } x\neq (0,\cdots,0),\\
\displaystyle (0,\cdots,0)&\mbox{ if } x=(0,\cdots,0).
\end{array}\right.
\end{equation*}
Except for at $x\neq (0,\cdots,0)$, $R$ is also continuous at $x=(0,\cdots,0)$. To see this, let $L$ be a lower bound of the Eulidean norm on $\partial (P_{m_1}\times \cdots \times P_{m_k}\times \Delta^k)$. Then for any $\epsilon>0$, if $|x|<L\epsilon$, then $|R(x)-R(0)|=\frac{|x|}{|r^{-1}(\frac{x}{|x|})|}\leq \frac{L\epsilon}{L}=\epsilon$.\\

Since $R$ is a continuous bijection from compact $T(P_{m_1}\times \cdots \times P_{m_k}\times \Delta^k)$ to Hausdorff $D^{n-1}$, $R$ is indeed a homeomorphism. Furthermore, $T$ is also a homeomorphism onto its image. So $R\circ T$ is a homeomorphism from $P_{m_1}\times \cdots \times P_{m_k}\times \Delta^k$ to $D^{n-1}$ mapping $\partial (P_{m_1}\times \cdots \times P_{m_k}\times \Delta^k)$ homeomorphically onto $S^{n-2}$.\\

So far, $P_{m_1}\times \cdots \times P_{m_k}\times (\bigcup_{i=2}^kv_1\cdots \widehat{v_i}\cdots v_{k+1})$ has not been mapped to the lower-hemisphere $S^{n-2}_-$. We will use stereographic projection to achieve this.\\

Recall $\Delta^k=v_1v_2\cdots v_{k+1}$ and $\partial \Delta^k=(\bigcup_{i=2}^kv_1\cdots \widehat{v_i}\cdots v_{k+1})\bigcup(v_1v_2\cdots v_k\bigcup v_2\cdots v_kv_{k+1})$. Then
\begin{enumerate}
\item $\mathcal{N}_k=\frac{1}{k-1}(v_2+\cdots+v_k)$ is in $\mathrm{Int}(v_1v_2\cdots v_k\bigcup v_2\cdots v_kv_{k+1})$.\\

\item $\mathcal{S}_k=\frac{1}{2}(v_1+v_{k+1})$ is in $\mathrm{Int}(\bigcup_{i=2}^kv_1\cdots \widehat{v_i}\cdots v_{k+1})$.\\

\item $\mathcal{N}_k$, $\mathcal{C}_k$ and $\mathcal{S}_k$ lie on the same line and $|\mathcal{N}_k\mathcal{C}_k|:|\mathcal{N}_k\mathcal{S}_k|=2:(k+1)$.
\end{enumerate}

\begin{figure}[!h]
		\centering
		\includegraphics[width=150mm]{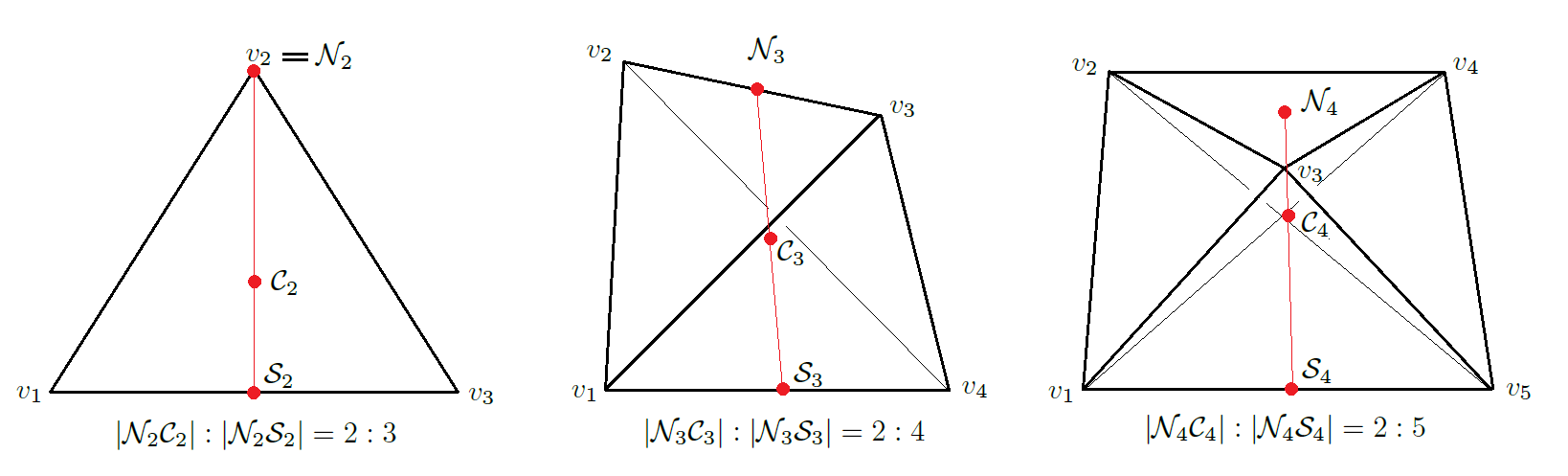}
		\caption{$\mathcal{N}_i$, $\mathcal{C}_i$ and $\mathcal{S_i}$, $i=2,3,5$.}
\end{figure}

Now we let $\mathcal{N}=(\mathcal{C}_{m_1},\cdots,\mathcal{C}_{m_k},\mathcal{N}_k)$ and $\mathcal{S}=(\mathcal{C}_{m_1},\cdots,\mathcal{C}_{m_k},\mathcal{S}_k)$. So
\begin{enumerate}
\item $\mathcal{N}$ is in the relative interior of $\partial(P_{m_1}\times \cdots \times P_{m_k}\times \Delta^k)\backslash \mathrm{Int}(P_{m_1}\times \cdots \times P_{m_k}\times (\bigcup_{i=2}^kv_1\cdots \widehat{v_i}\cdots v_{k+1}))$.\\

\item $\mathcal{S}$ is in the relative interior of $P_{m_1}\times \cdots \times P_{m_k}\times (\bigcup_{i=2}^kv_1\cdots \widehat{v_i}\cdots v_{k+1})$.\\

\item $\mathcal{C}$ is in $\mathrm{Int}(P_{m_1}\times \cdots \times P_{m_k}\times \Delta^k)$.\\

\item $\mathcal{N}$, $\mathcal{C}$ and $\mathcal{S}$ lie on the same line and $|\mathcal{N}\mathcal{C}|:|\mathcal{N}\mathcal{S}|=2:(k+1)$.
\end{enumerate}

Let $\phi$ be an element in $SO(n\mbox{-}1)$ rotating the vector $\mathcal{N}-\mathcal{C}$ so that it is aligned with the positive $x_{n-1}-$axis. Notice that $\phi:D^{n-1}\rightarrow D^{n-1}$ is a homeomorphism. So $b:=\phi\circ R\circ T$ is a homeomorphism.\\

Let $N=(0,\cdots,0,1)\in \mathbb{R}^{n-1}$ and $S=(0,\cdots,0,-1)\in \mathbb{R}^{n-1}$. The stereographic projections $$p'_N:S^{n-2}\backslash N\rightarrow \mathbb{R}^{n-2}$$ $$(x_1,\cdots,x_{n-1})\mapsto (\frac{x_1}{1-x_{n-1}},\frac{x_2}{1-x_{n-1}},\cdots,\frac{x_{n-2}}{1-x_{n-1}})$$ and $$p'_S:S^{n-2}\backslash S\rightarrow \mathbb{R}^{n-2}$$ $$(x_1,\cdots,x_{n-1})\mapsto (\frac{x_1}{1+x_{n-1}},\frac{x_2}{1+x_{n-1}},\cdots,\frac{x_{n-2}}{1+x_{n-1}})$$  are homeomorphisms with inverses $${p'}_N^{-1}(y_1,\cdots,y_{n-2})=(\frac{2y_1}{1+y_1^2+\cdots+y_{n-2}^2},\cdots,\frac{2y_{n-2}}{1+y_1^2+\cdots+y_{n-2}^2},\frac{-1+y_1^2+\cdots+y_{n-2}^2}{1+y_1^2+\cdots+y_{n-2}^2}),$$ $${p'}_S^{-1}(y_1,\cdots,y_{n-2})=(\frac{2y_1}{1+y_1^2+\cdots+y_{n-2}^2},\cdots,\frac{2y_{n-2}}{1+y_1^2+\cdots+y_{n-2}^2},\frac{1-y_1^2-\cdots-y_{n-2}^2}{1+y_1^2+\cdots+y_{n-2}^2}).$$

Since $$D^{n-2}_S:=b(P_{m_1}\times \cdots \times P_{m_k}\times (\bigcup_{i=2}^kv_1\cdots \widehat{v_i}\cdots v_{k+1}))\subset S^{n-2}\backslash N$$ and $$D^{n-2}_N:=b(\partial(P_{m_1}\times \cdots \times P_{m_k}\times \Delta^k)\backslash \mathrm{Int}(P_{m_1}\times \cdots \times P_{m_k}\times (\bigcup_{i=2}^kv_1\cdots \widehat{v_i}\cdots v_{k+1})))\subset S^{n-2}\backslash S,$$ $p'_N$ and $p'_S$ restric to homeomorphisms $p_N$ and $p_S$ from $D^{n-2}_S$ and $D^{n-2}_N$ respectively to their images.\\

$P_{m_1}\times \cdots \times P_{m_k}\times (\bigcup_{i=2}^kv_1\cdots \widehat{v_i}\cdots v_{k+1})$ admits a presentation of the following form
$$\bigcup_{l=1}^k\bigcup_{j_l\in I_l}\bigcup_{i=2}^k\bigcup_{j=1,k+1}(F_{j_1}*\mathcal{C}_{m_1})\times \cdots \times (F_{j_l}*\mathcal{C}_{m_l})\times \cdots \times (F_{j_k}*\mathcal{C}_{m_k})\times (v_1\cdots\widehat{v_i}\widehat{v_j}\cdots v_{k+1}\mathcal{S}_k).$$

Each product on the right has $\mathcal{S}=(\mathcal{C}_{m_1},\cdots,\mathcal{C}_{m_k},\mathcal{S}_k)$ as one of its vertices and it is a union of line segments from $\mathcal{S}$ to a point on
$\bigcup_{l=1}^k(F_{j_1}*\mathcal{C}_{m_1})\times \cdots \times F_{j_l}\times \cdots \times (F_{j_k}*\mathcal{C}_{m_k})\times (v_1\cdots\widehat{v_i}\widehat{v_j}\cdots v_{k+1}\mathcal{C}_k)\bigcup (F_{j_1}*\mathcal{C}_{m_1})\times \cdots \times (F_{j_k}*\mathcal{C}_{m_k})\times (v_1\cdots\widehat{v_i}\widehat{v_j}\cdots v_{k+1})$ and the lines intersect only at $\mathcal{S}$.\\

The line segments of different products above agree on the intersections. So $P_{m_1}\times \cdots \times P_{m_k}\times (\bigcup_{i=2}^kv_1\cdots \widehat{v_i}\cdots v_{k+1})$ is the union of line segments emanating from $\mathcal{S}$ and the union of the end points different from $\mathcal{S}$ of the line segments is $\partial (P_{m_1}\times \cdots \times P_{m_k}\times (\bigcup_{i=2}^kv_1\cdots \widehat{v_i}\cdots v_{k+1}))$. Thus, $p_N(D^{n-2}_S)$ is the closure of an open set in $\mathbb{R}^{n-2}$ containing the origin and $p_N(D^{n-2}_S)$ is a union of line segments emanating from the origin such that each segment intersects $\partial(p_N(D^{n-2}_S))$ at only one point.\\

Therefore, we have a homeomorphism $G_S:p_N(D^{n-2}_S)\rightarrow D^{n-2}\subset \mathbb{R}^{n-2}$ obtained similar to that of $R$.\\

Every line segment $x\mathcal{S}$ above and the point $\mathcal{C}$ determine a half plane. This half plane intersects $\partial (P_{m_1}\times \cdots \times P_{m_k}\times \Delta^k)$ at a piecewise linear path $\mathcal{N}y_1\cdots y_m x \mathcal{S}$ such that $\mathcal{N}y_1\cdots y_m x$ is mapped to a line segment emanating from the origin in $p_S(D^{n-2}_N)$. Thus, $p_S(D^{n-2}_N$) is the closure of an open set in $\mathbb{R}^{n-2}$ containing the origin and $p_S(D^{n-2}_N)$ is a union of line segments emanating from the origin such that each segment intersects $\partial(p_S(D^{n-2}_N))$ at only one point.\\

Therefore, we have a homeomorphism $G_N:p_S(D^{n-2}_N)\rightarrow D^{n-2}\subset \mathbb{R}^{n-2}$ obtained similar to that of $R$.\\

Now we define $f:S^{n-2}\rightarrow S^{n-2}$ by
\begin{equation*}
f(x)=\left\{
\begin{array}{ll}
p_N^{-1}\circ G_S \circ p_N (x), & x\in D^{n-2}_S\\
p_S^{-1}\circ G_N \circ p_S (x), & x\in D^{n-2}_N.
\end{array}\right.
\end{equation*}

Notice that each branch of $f$ is continuous  and they agree on the overlap. (See the figure below.) So $f$ is continuous. Being a continuous bijection from compact Hausdorff $S^{n-2}$ onto itself, $f$ is thus a homeomorphism.\\

\begin{figure}[!ht]
		\centering
		\includegraphics[width=150mm]{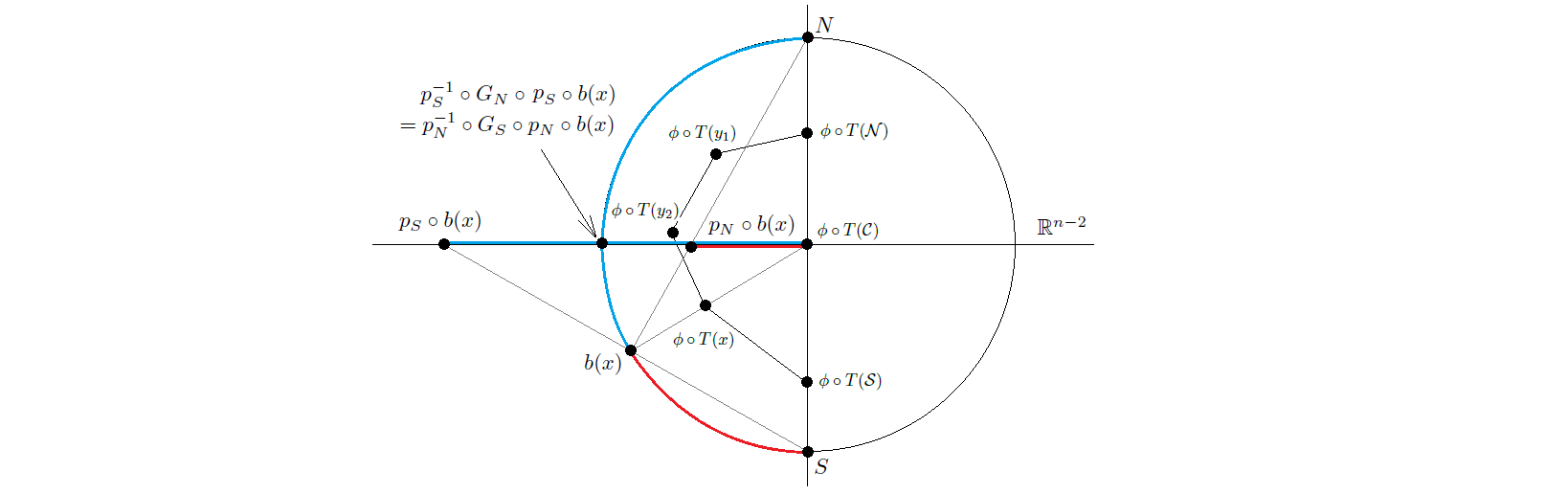}
		\caption{Two stereographic projections.}
\end{figure}

Now we extend $f:S^{n-2}\rightarrow S^{n-2}$ to $F:D^{n-1}\rightarrow D^{n-1}$ by

\begin{equation*}
F(x)=\left\{
\begin{array}{ll}
\displaystyle f(\frac{x}{|x|})|x|, & x\neq (0,\cdots,0)\\
(0,\cdots,0), & x=(0,\cdots,0).
\end{array}\right.
\end{equation*}

Similar to $R$, $F$ is continuous at $x=(0,\cdots,0)$  because for any $\epsilon>0$, if $|x|<\epsilon$, then $|F(x)-F(0)|=|f(\frac{x}{|x|})||x|=|x|<\epsilon$. Being a continuous bijection from compact Hausdorff $D^{n-1}$ onto itself, $F$ is thus a homeomorphism.\\

Therefore, $F\circ b$ is a homeomorphism from $P_{m_1}\times P_{m_2}\times \cdots \times P_{m_k}\times\Delta^k$ to $D^{n-1}$ mapping $P_{m_1}\times P_{m_2}\times \cdots \times P_{m_k}\times(\bigcup_{i=2}^kv_1\cdots \widehat{v_i}\cdots v_{k+1})$ homeomorphically onto $S^{n-2}_-$. Thus, $F\circ b$ also maps $\partial(P_{m_1}\times P_{m_2}\times \cdots \times P_{m_k}\times\Delta^k)\backslash \mathrm{Int}(P_{m_1}\times P_{m_2}\times \cdots \times P_{m_k}\times(\bigcup_{i=2}^kv_1\cdots \widehat{v_i}\cdots v_{k+1}))$ homeomorphically onto $S^{n-2}_+$.
\end{proof}

\begin{proof}[Proof of Proposition \ref{ProductRetract}]
Define $H_{P_{m_1}\times \cdots \times P_{m_k}\times \Delta^k}:(P_{m_1}\times \cdots \times P_{m_k}\times \Delta^k)\times I\rightarrow P_{m_1}\times \cdots \times P_{m_k}\times \Delta^k$ by $$H_{P_{m_1}\times \cdots \times P_{m_k}\times \Delta^k}(x,t)=b^{-1}\circ F^{-1}\circ H_{D^{n-1}}(F\circ b(x),t).$$
\end{proof}

\subsection{Proof of Proposition \ref{extended}}. Now we prove Proposition \ref{extended}.\\

\begin{proof}[Proof of Proposition \ref{extended}]

Let $$S^{n-2}_-\times I_{\epsilon}:=\{(x_1,\cdots,x_{n-2},x_{n-1}-\delta)\in \mathbb{R}^{n-1}\big|(x_1,\cdots,x_{n-1})\in S^{n-2}_-,0\leq \delta\leq \epsilon\},$$ and the extended closed $(n-1)$-cell be
$$\mathrm{Ext}_{\epsilon}(D^{n-1}):=D^{n-1}\bigcup (S^{n-2}_-\times I_{\epsilon}).$$

Define $$H_{S^{n-2}_-\times I_{\epsilon}}:(S^{n-2}_-\times I_{\epsilon})\times I\rightarrow \mathrm{Ext}_{\epsilon}(D^{n-1})$$ by $H_{S^{n-2}_-\times I_{\epsilon}}((x_1,\cdots,x_{n-1}),t)=$ $$(x_1,\cdots,x_{n-2},x_{n-1}+t(x_{n-1}+\sqrt{1-x_1^2-\cdots-x_{n-2}^2}+\epsilon)\frac{2\sqrt{1-x_1^2-\cdots-x_{n-2}^2}}{\epsilon}).$$

Then $H_{S^{n-2}_-\times I_{\epsilon}}$ is a well-defined homotopy. It linearly extends each fiber over $(x_1,\cdots,x_{n-2},x_{n-1}-\epsilon)$ in $S^{n-2}_-\times I_{\epsilon}$ to the fiber in $\mathrm{Ext}_{\epsilon}(D^{n-1})$. For each $t\in I$, $H_{S^{n-2}_-\times I_{\epsilon}}(\cdot,t)$ is a homeomorphism onto its image.

\begin{figure}[!h]
		\centering
		\includegraphics[width=150mm]{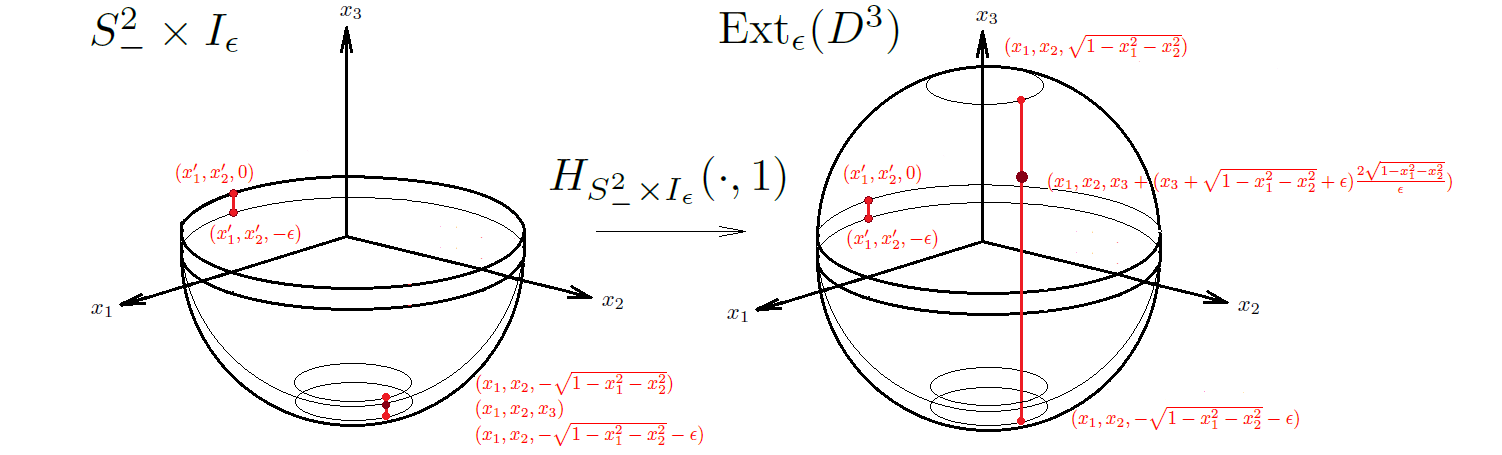}
		\caption{The extended closed 3-cell $\mathrm{Ext}_{\epsilon}(D^3)$.}
\end{figure}

By extending $R$ and possibly perturbing $F$, we get $\widetilde{R}$ and $\widetilde{F}$ such that $$\widetilde{F}\circ \phi\circ \widetilde{R}\circ T:(P_{m_1}\times \cdots \times P_{m_k}\times (\bigcup_{i=2}^kv_1\cdots\widehat{v_i}\cdots v_{k+1}))\times I_{\epsilon}\rightarrow S^{n-2}_-\times I_{\epsilon}$$ is a homeomorphism whose images of $P_{m_1}\times \cdots \times P_{m_k}\times (\bigcup_{i=2}^kv_1\cdots\widehat{v_i}\cdots v_{k+1})\times \{0\}$ and $P_{m_1}\times \cdots \times P_{m_k}\times (\bigcup_{i=2}^kv_1\cdots\widehat{v_i}\cdots v_{k+1})\times \{\epsilon\}$ are $S^{n-2}_-$ and $S^{n-2}_--\{(0,\cdots,0,\epsilon)\}$, respectively. Furthermore, $F\circ b$ ($=F\circ \phi \circ R \circ T$) and $\widetilde{F}\circ \phi\circ \widetilde{R}\circ T$ agree on $P_{m_1}\times \cdots \times P_{m_k}\times (\bigcup_{i=2}^kv_1\cdots\widehat{v_i}\cdots v_{k+1})$.\\

We define $H_{(P_{m_1}\times \cdots \times P_{m_k}\times (\bigcup_{i=2}^kv_1\cdots\widehat{v_i}\cdots v_{k+1}))\times I_{\epsilon}}:$ $$((P_{m_1}\times \cdots \times P_{m_k}\times (\bigcup_{i=2}^kv_1\cdots\widehat{v_i}\cdots v_{k+1}))\times I_{\epsilon})\times I\rightarrow \mathrm{Ext}_{\epsilon}(P_{m_1}\times \cdots \times P_{m_k}\times \Delta^k)$$ by $$H_{(P_{m_1}\times \cdots \times P_{m_k}\times (\bigcup_{i=2}^kv_1\cdots\widehat{v_i}\cdots v_{k+1}))\times I_{\epsilon}}(x,t)=T^{-1}\circ \widetilde{R}^{-1}\circ \phi^{-1}\circ \widetilde{F}^{-1}\circ H_{S^{n-2}_-\times I_{\epsilon}}(\widetilde{F}\circ \phi\circ \widetilde{R}\circ T(x),t).$$

Notice that $H_{P_{m_1}\times \cdots \times P_{m_k}\times \Delta^k}(\cdot,t)$ and $H_{(P_{m_1}\times \cdots \times P_{m_k}\times (\bigcup_{i=2}^kv_1\cdots\widehat{v_i}\cdots v_{k+1}))\times I_{\epsilon}}(\cdot,t)$ agree on $P_{m_1}\times \cdots \times P_{m_k}\times (\bigcup_{i=2}^kv_1\cdots\widehat{v_i}\cdots v_{k+1})$ for each $t\in I$. Then we can define $H_{\mathrm{Ext}_{\epsilon}(P_{m_1}\times \cdots \times P_{m_k}\times \Delta^k)}:\mathrm{Ext}_{\epsilon}(P_{m_1}\times \cdots \times P_{m_k}\times \Delta^k)\times I\rightarrow \mathrm{Ext}_{\epsilon}(P_{m_1}\times \cdots \times P_{m_k}\times \Delta^k)$ by
$H_{\mathrm{Ext}_{\epsilon}(P_{m_1}\times \cdots \times P_{m_k}\times \Delta^k)}(x,t)=H_{P_{m_1}\times \cdots \times P_{m_k}\times \Delta^k}(x,t)$ if $x\in P_{m_1}\times \cdots \times P_{m_k}\times \Delta^k$ and $H_{\mathrm{Ext}_{\epsilon}(P_{m_1}\times \cdots \times P_{m_k}\times \Delta^k)}(x,t)=$ \\
$H_{(P_{m_1}\times \cdots \times P_{m_k}\times (\bigcup_{i=2}^kv_1\cdots\widehat{v_i}\cdots v_{k+1}))\times I_{\epsilon}}(x,t)$ if $x \in (P_{m_1}\times \cdots \times P_{m_k}\times (\bigcup_{i=2}^kv_1\cdots\widehat{v_i}\cdots v_{k+1}))\times I_{\epsilon}$. It can be readily checked that the three conditions are satisfied.\\

\end{proof}


\begin{thebibliography}{Hat}


\bibitem{Bat} M. A. Batanin, \emph{Symmetrisation of n-operads and compactification of real configuration spaces}, 211 (2007), no. 2,  pp. 685-725.

\bibitem{BB} M. A. Batanin, C. Berger, \emph{The lattice path operad and Hochschild cochains}, Comtemp. Math. 504 (2009), pp. 23-52.

\bibitem{Ber97} C. Berger, \emph{Combinatorial models of real configuration spaces and $E_n$-operads}, Contemp. Math. 202 (1997), pp. 37-52.


\bibitem{Ber98} C. Berger, \emph{Cellular structures for $E_n$-operads}, Workshop on Operads, Osnabr\"{u}ck, (1998), pp. 4-22.

\bibitem{Ber99} C. Berger, \emph{Double loop spaces, braided monoidal categories and algebraic 3-type of space}, Contemp. Math. 227 (1999), pp. 49-66.

\bibitem{BF} C. Berger, B. Fresse, \emph{Combinatorial operad actions on cochains}, Math. Proc. Cambridge Philos. Soc. 137 (2004), pp. 135-174.

\bibitem{BZ} P. V. M. Blagojevi\'{c}, G. M. Ziegler, \emph{Convex equipartitions via equivariant obstruction theory}, Israel J. Math 200 (2014), pp. 49-77.

\bibitem{BV} J. M. Boardman, R. M. Vogt, \emph{Homotopy invariant algebraic structures on topological spaces}, Lecture Notes in Mathematics, Vol. 347, Springer, 1973.

\bibitem{CM} G. Carlsson, J. Milgram, \emph{Stable homotopy and iterated loop spaces}, Handbook of Algebraic Topology, North-Holland (1995), pp. 505-583.

\bibitem{ConnesKreimer}
A. Connes and D. Kreimer. \emph{Hopf algebras, renormalization and noncommutative geometry.} Comm. Math. Phys. 199 (1998), no. 1, 203--242.

 \bibitem
{CS} 
M.\ Chas and D.\ Sullivan.
{\it  String Topology.}
Preprint math.GT/9911159


\bibitem{FCohen} F. R. Cohen, \emph{The homology of $C_{n+1}$-spaces, $n\geq 0$}, Homology of Iterated Loop Spaces, Lecture Notes in Mathematics, Spring-Verlag (1976), no. 533, pp. 207-351.

\bibitem{Fresse} B. Fresse, \emph{Homotopy of operads and Grothendieck-Techm\"{u}ller groups}, book in preparation, \url{http://math.univ-lille1.fr/~fresse/OperadHomotopyBook/}

\bibitem{F} Z. Fiedorowicz, \emph{Construction of $E_n$ operads}, Workshop on Operads, Osnabr\"{u}ck, (1998), pp. 34-55.

\bibitem{GJ} E. Getzler, J. Jones, \emph{Operads, homotopy algebras and iterated integrals for double loop spaces}, Preprint hep-th/9409063


\bibitem
{GV} M.\ Gerstenhaber and A.\ A.\ Voronov.
{\it Higher-order operations on the Hochschild complex.}
Funktsional. Anal. i Prilozhen. 29
(1995), no. 1, 1--6, 96; translation in Funct.
 Anal. Appl. 29 (1995), no. 1, 1--5.

\bibitem{Kap} M. M. Kapranov, \emph{Permuto-associahedron, MacLane coherence theorem and asymptotic zones for the KZ equation}, J. Pure and Applied Algebra 85 (1993), pp. 119-142.

\bibitem{Kau02} R. M. Kaufmann, \emph{On several varieties of cacti and their relations}, Algebraic \& Geometric Topology 5 (2005), pp. 237-300. arXiv 0209131

\bibitem{Kau03} R. M. Kaufmann, \emph{On spineless cacti, Deligne's conjecture and Connes-Kreimer's Hopf algebra}, Topology 46 (2007) pp. 39-88. arXiv 0308005


\bibitem{hoch2} R. M. Kaufmann. \emph{Moduli space actions on the
Hochschild cochain complex II: correlators.}
Journal of Noncommutative Geometry 2, 3 (2008), 283-332.
arXiv 0606.064


\bibitem {cyclic} R. M. Kaufmann. {\it A proof of a cyclic version of
Deligne's conjecture via Cacti.} Math.~Research~Letters 15, 5
(2008),  pp. 901-921. arXiv 0403340.


\bibitem{Kau08} R. M. Kaufmann, \emph{Dimension vs. Genus: A surface realization of the little k-cubes and an $E_{\infty}$-operad}, in: Algebraic Topology - Old and New. M. M. Postnikov Memorial Conference, Banach Center Publ. 85 (2009), Polish Acad. Sci., Warsaw, pp. 241-274. arXiv 0804.0608

\bibitem{Kau10} R. M. Kaufmann, \emph{Graphs, strings and actions}, in: Algebra, Arithmetic and Geometry Vol II: In Honor of Yu. I. Manin. Progress in Mathematics 270, 127-178. Birhauser, Boston, 2010.

\bibitem{letter} Letter to P.\ Salvatore, Mar 31, 2006
http://www.math.purdue.edu/$\sim$rkaufman/talks.html

\bibitem {KLP} R. M. Kaufmann, M. Livernet and R. C. Penner. \emph{Arc Operads and Arc Algebras}.
  Geometry and Topology 7 (2003), 511-568.


\bibitem{KSchw} Kaufmann, Ralph M. and  Schwell, Rachel. ``Associahedra,
Cyclohedra and a Topological solution to the $A_{\infty}$--Deligne
conjecture''. Advances in Math.223, 6 (2010), 2166-2199.

\bibitem{Kon99} M. Kontsevich, \emph{Operads and motives in deformation quantization}, Lett. Math. Phys. 48 (1999), no. 1, pp. 35-72.



\bibitem{Kon00} M. Kontsevich, Y. Soibelman, \emph{Deformations of algebras over operads and the Deligne conjecture}, In Conf\'{e}rence Mosh\'{e} Flato 1999, Vol. I (Dijon), volume 21 of Math. Phys. Stud., pp. 255?307. Kluwer Acad. Publ., Dordrecht, 2000.

\bibitem{Milgram66}
R. J. Milgram. \emph{Iterated loop spaces.} Ann. of Math. (2) 84 1966 386–403. 

\bibitem{Mun2}
J. R. Munkres. Elements of algebraic topology. Addison-Wesley Publishing Company, Menlo Park, CA, 1984. ix+454 

\bibitem{MSS} M. Markl, S. Shnider and J. Stasheff, \emph{Operads in algebra, topology and physics}, Mathematical Surveys and Monographs 96., American Mathematical Society, Providence, RI, 2002.

\bibitem{May} J. P. May, \emph{The geometry of iterated loop spaces}, Lectures Notes in Mathematics 271 (1972).

\bibitem{May2} J. P. May, \emph{Definitions: Operads, algebras and modules }, Contemp. Math. 202 (1997), pp. 1-7.



\bibitem{MS} J. E. McClure, J. H. Smith, \emph{A solution of Deligne's Hochschild cohomology conjecture}, in Recent Progress in Homotopy Theory (Baltimore, MD, 2000), Contemp. Math. 293 (2002), pp. 153-193. arXiv 9910126

\bibitem{MS03} J. E. McClure, J. H. Smith, \emph{Multivariable cochain operations and little n-cubes}, J. Am. Math. Soc. 16 (2003), pp. 681-704. arXiv 0106024

\bibitem{MS04} J. E. McClure, J. H. Smith, \emph{Cosimplicial objects and little n-cubes, I}, Amer. J. Math. 126 (2004), pp. 1109-1153.
arXiv 0211368

\bibitem{Mil} R. J. Milgram, \emph{Iterated loop spaces}, Ann. of Math. 84 (1966), pp. 286-403.



\bibitem{Smi} J. H. Smith, \emph{Simplicial goup models for $\Omega^nS^nX$}, Isreal J. of Math. 66 (1989), pp. 330-350.

\bibitem{Ste} R. Steiner, \emph{A canonical operad pair}, Math. Proc. Camb. Phil. Soc., 86 (1979), pp. 443-449.



\bibitem{Tar1} D. E. Tamarkin, \emph{Another proof of M. Kontsevich formality theorem}, Preprint math.QA/9803025.

\bibitem{Tar2} D. E. Tamarkin, \emph{Formality of chain operad of little discs}, Lett. Math. Phys. 66 (2003), pp. 65-72. arXiv


\bibitem{Tonks} 
A.\ Tonks. {\it Relating the associahedron and the permutohedron.} Operads: Proceedings of Renaissance Conferences (Hartford, CT/Luminy, 1995), 33--36, Contemp. Math., 202, Amer. Math. Soc., Providence, RI, 1997

\bibitem{TradlerZeinalian}
 T.\ Tradler and M.\ Zeinalian. {\it On the cyclic Deligne conjecture.} J. Pure Appl. Algebra 204 (2006), no. 2, 280--299.
 arXiv 0404218.


\bibitem{Tur} V. Tourtchine, \emph{Dyer-Lashof-Cohen operations in Hochschild cohomology}, Algebr. Geom. Topol. 6 (2006), pp. 875-894.

\bibitem{Vor} A. A. Voronov, \emph{Homotopy Gerstenhaber algebras}, In Conf\'{e}rence Mosh\'{e} Flato 1999, Vol. II (Dijon), volume 22 of Math. Phys. Stud., pp. 307-331. Kluwer Acad. Publ., Dordrecht, 2000.

\bibitem{Vor2} A. A. Voronov, \emph{  Notes on universal algebra.} in: Graphs and patterns in mathematics and theoretical physics, 81--103, Proc. Sympos. Pure Math., 73, Amer. Math. Soc., Providence, RI, 2005

\bibitem{Ward}
B. C. Ward. {\it Cyclic $A_{\infty}$ structures and Deligne's conjecture.} Algebr. Geom. Topol. 12 (2012), no. 3, 1487--1551

\bibitem{Zie} G. M. Ziegler, \emph{Lectures on polytopes}, Graduate Texts in Mathematics 152, Berlin, New York, 1995.

\end{thebibliography}
\end{document}